\numberwithin{equation}{section}
\numberwithin{figure}{section}
\theoremstyle{plain}
\newtheorem{thm}{Theorem}
  \theoremstyle{plain}
  \newtheorem{cor}[thm]{Corollary}
  \theoremstyle{remark}
  \newtheorem{rem}[thm]{Remark}
  \theoremstyle{definition}
  \newtheorem{defn}[thm]{Definition}
  \theoremstyle{plain}
  \newtheorem{lem}[thm]{Lemma}
  \theoremstyle{plain}
  \newtheorem{prop}[thm]{Proposition}
\DeclareMathOperator{\interior}{int}
\numberwithin{thm}{section}
\begin{document}

\title{Real Analytic Metrics on $S^{2}$ with Total Absence of Finite Blocking}

\author{Marlies Gerber and Lihuei Liu}
\begin{abstract}
If $(M,g)$ is a Riemannian manifold and $(x,y)\in M\times M,$ then
a set $P\subset M\setminus\{x,y\}$ is said to be a blocking set for
$(x,y)$ if every geodesic from $x$ to $y$ passes through a point
of $P.$ If no pair $(x,y)$ in $M\times M$ has a finite blocking
set, then $(M,g)$ is said to be totally insecure. We prove that there
exist real analytic metrics $h$ on $S^{2}$ such that $(S^{2},h)$
is totally insecure.
\end{abstract}

\subjclass[2000]{Primary: 58E10; Secondary: 37D25, 37D40}

\keywords{geodesics, finite blocking, insecurity, Lyapunov function}

\email{gerber@indiana.edu, liul@umail.iu.edu}

\address{Department of Mathematics, Indiana University, Bloomington, IN 47405,
USA}

\address{Department of Mathematics, University of Tennessee, Knoxville, TN
37996, USA}

\maketitle

\section{Introduction}

Let $(M,g)$ be a $C^{\infty}$ Riemannian manifold. In this paper,
manifolds and surfaces are assumed to be without boundary. We consider
a geodesic on $M$ as a mapping $\gamma:I\to M,$ where $I$ is an
interval of positive length. The trace of $\gamma,$ denoted tr$(\gamma),$
is the image of $\gamma$ in $M.$ Unless specified otherwise, we
will assume that geodesics are parametrized by arc length. If $I=[a,b],$
$x=\gamma(a),$ and $y=\gamma(b),$ then $\gamma$ is said to be a
geodesic from $x$ to $y.$ Two geodesics $\gamma_{i}:I_{i}\to M,$
$i=1,2,$ will be considered the same if and only if $\gamma_{1}$
is equal to $\gamma_{2}$ composed with a translation that maps $I_{1}$
onto $I_{2}.$ A subset $P\subset M$ is called a \emph{blocking set}
for a collection of geodesics $\Gamma$ on $M,$ if tr$(\gamma)\cap P\ne\emptyset$
for every geodesic $\gamma$ in $\Gamma.$ If $\Gamma$ consists of
all geodesics from $x$ to $y$ and there exists a finite blocking
set $P\subset M\setminus\{x,y\},$ then $(x,y)$ is said to be \emph{secure};
otherwise $(x,y)$ is said to be \emph{insecure}. The Riemannian manifold
$(M,g)$ is defined to be \emph{secure} if every pair $(x,y)\in M$
is secure. If there exists an insecure pair $(x,y)\in M\times M,$
$(M,g)$ is said to be \emph{insecure}. Moreover, if every pair $(x,y)\in M\times M$
is insecure, then $(M,g)$ is defined to be \emph{totally insecure.}

E. Gutkin and V. Schroeder \cite{GutkinSchroeder} showed that flat
metrics are secure, and it is conjectured in \cite{BurnsGutkin,LafontSchmidt}
that these are the only secure metrics. On the other hand, there are
many examples of totally insecure metrics \cite{BangertGutkin,BurnsGutkin,GutkinSchroeder,LafontSchmidt},
and according to \cite{BurnsGutkin}, it is expected that ``most''
metrics are totally insecure. V. Bangert and Gutkin \cite{BangertGutkin}
proved that any compact Riemannian surface of genus greater than one
is totally insecure. In case $M$ is a compact surface of genus one,
they showed that there is a $C^{2}$ open and $C^{\infty}$ dense
collection of metrics on $M$ that are totally insecure. However,
in the case of surfaces of genus zero, there were no previously known
examples of totally insecure metrics. Gerber and W.-K. Ku \cite{GerberKu}
showed that on any compact Riemannian manifold $M$ of dimension greater
than one there is a dense G-delta set of $C^{\infty}$ metrics, $\mathcal{G},$
such that for each $g\in\mathcal{G},$ there is a dense G-delta subset
$\mathcal{R}=\mathcal{R}(g)$ of $M\times M$ consisting of insecure
pairs $(x,y)$ for $(M,g),$ but this result only provides ``generic''
insecurity, not total insecurity. The main theorem of our paper is
that if $M=S^{2}$ or $P^{2}(\mathbb{R}),$ then there exist real
analytic metrics $h$ on $M$ such that $(M,h)$ is totally insecure.
We present the argument for $M=S^{2},$ and the case $M=P^{2}(\mathbb{R})$
follows easily, as indicated in Remark \ref{rem:RP2}.

The real analytic metrics $h$ on $S^{2}$ for which we prove total
insecurity are obtained in the same way as the metrics on $S^{2}$
for which K. Burns and Gerber \cite{BurnsGerberI,BurnsGerberII} showed
that the geodesic flow is ergodic. Our proof relies on ideas in \cite{BangertGutkin}
and techniques in non-uniform hyperbolicity \cite{BarreiraPesin,KatokBurns}.
Burns and Gutkin \cite{BurnsGutkin} and, independently, J.-F. Lafont
and B. Schmidt \cite{LafontSchmidt} showed that compact Riemannian
manifolds (of any dimension) with no conjugate points whose geodesic
flows have positive topological entropy are totally insecure. In the
special case of a compact manifold with negative curvature the methods
in \cite{BangertGutkin} provide a different proof of total insecurity.
A key step in our paper is to show that there is a closed $h$-geodesic
$\rho$ such that for any pair $(x,y)\in S^{2}\times S^{2}$ there
is an infinite sequence of $h$-geodesics $(\gamma_{n})$ from $x$
to $y$ that stay arbitrarily close to $\rho$ except during a uniformly
bounded amount of time at the beginning and at the end of their parameter
intervals. (See Proposition \ref{pro:BangertGutkinProperty} for a
precise statement.) This condition is essentially taken from \cite{BangertGutkin},
and it is easy to establish if $(S^{2},h)$ is replaced by a manifold
of negative curvature and $\rho$ is replaced by any closed geodesic.
(For manifolds of negative curvature, the first statement in Proposition
\ref{pro:Asymptotic} is also easy to prove, for any closed geodesic
$\rho,$ but the analog of Proposition \ref{pro:BangertGutkinProperty}
can be proved directly.)

According to Proposition \ref{pro:Asymptotic}, for each $(x,y)\in S^{2}\times S^{2},$
there are $h$-geodesics $\gamma^{+}:[0,\infty)\to S^{2}$ and $\gamma^{-}:(-\infty,0]\to S^{2}$
with $\gamma^{+}(0)=x$ and $\gamma^{-}(0)=y$ such that $\gamma^{+}$
is asymptotic to the closed $h$-geodesic $\rho$ as $t\to\infty$
and $\gamma^{-}$ is asymptotic to $\rho$ as $t\to-\infty$ (as in
Definition \ref{def:DefAsymptotic}). The sequence of geodesics $(\gamma_{n})$
mentioned above is obtained by finding small perturbations $\gamma_{n}^{+}$
and $\gamma_{n}^{-}$ of $\gamma^{+}$ and $\gamma^{-},$ respectively,
and large positive $t_{n}^{+}$ and $t_{n}^{-}$ such that $\gamma_{n}^{+}|[0,t_{n}^{+}]$
and $\gamma_{n}^{-}|[-t_{n}^{-},0]$ can be smoothly joined at $\gamma_{n}^{+}(t_{n}^{+})=\gamma_{n}^{-}(-t_{n}^{-}).$

We show (in Proposition \ref{pro:AnalyticNonconcurrence}) that no
one-element set $\{z\}$ can be a blocking set for any infinite collection
of geodesics in $(S^{2},h)$ of uniformly bounded length starting
at a point $x.$ This is clearly true if $(S^{2},h)$ is replaced
by a manifold with no conjugate points. However, any Riemannian metric
on $S^{2}$ must have conjugate points (Remark 3.4 in Chapter 7 of
\cite{doCarmo}). In fact, for the metrics from \cite{BurnsGerberII},
there are conjugate points along any geodesic segment that passes
through a cap (as defined in Section \ref{sec:Construction-of-Metrics}
below). Our proof of Proposition \ref{pro:AnalyticNonconcurrence}
relies on the real analyticity of the metric.

The proof of Proposition \ref{pro:Asymptotic} utilizes the fact that
the geodesic flow for $(S^{2},h)$ is topologically transitive (which
follows from the ergodicity with respect to Liouville measure), but
ergodicity is not used in any other way.

\section{Construction of Totally Insecure Real Analytic Metrics on $S^{2}$\label{sec:Construction-of-Metrics}}

We describe the construction of real analytic metrics on compact surfaces
with ergodic geodesic flow, as in \cite{BurnsGerberII}. Let $S$
be a compact surface with a real analytic differentiable structure,
and let $g$ be a $C^{\infty}$ metric on $S$ that satisfies the
following conditions:
\begin{enumerate}
\item There is a finite (non-empty) disjoint collection of monotone curvature
caps $\mathcal{C}_{i},$ $i=1,\dots,q,$ (as defined below) such that
$S\setminus(\cup_{i=1}^{q}\mathcal{C}_{i})$ has negative curvature
with respect to $g.$
\item If $s$ is the signed distance from the boundary of a cap $\mathcal{C}=\mathcal{C}_{i},$
for $i\in\{1,\dots,q\},$ with $s>0$ in the interior of $\mathcal{C},$
then the curvature with respect to $g$ is $s$ for points in a neighborhood
of $\partial\mathcal{C}.$
\end{enumerate}
Note that assumption (2) implies that $g$ is real analytic in a neighborhood
of $\partial\mathcal{C}$. Metrics satisfying (1) and (2) exist on
every compact real analytic surface $S.$ (See Section 1 of \cite{BurnsGerberII}.)
Such metrics are viewed as having ``almost negative curvature,''
since many of the properties of geodesic flows on surfaces of negative
curvature extend to metrics of this type. They are of interest primarily
in the case of surfaces of genus zero or one, which do not support
metrics of negative curvature.

A \emph{cap} is defined to be a closed two-dimensional disk with nonnegative
curvature such that the boundary circle is the trace of a real analytic
geodesic. We say that a cap $\mathcal{C}$ has \emph{monotone curvature}
if it is radially symmetric and its curvature is a nondecreasing function
of the distance from the boundary of $\mathcal{C}.$

Real analytic metrics $h$ with ergodic geodesic flow are obtained
from the following two theorems in \cite{BurnsGerberII}. The proof
of Theorem \ref{thm:Cartan} is based on Cartan's theorem B \cite{Cartan}.
\begin{thm}
\label{thm:BurnsGerber} Let $S$ be a compact surface with a real
analytic differentiable structure, and let $g$ be a $C^{\infty}$
Riemannian metric on $S$ satisfying the above conditions (1) and
(2). Choose an open neighborhood $\Upsilon_{i}$ of $\partial\mathcal{C}_{i}$
for $i=1,\dots,q,$ and let $\Upsilon=\cup_{i=1}^{q}\Upsilon_{i}.$
Let $\mathbb{H}_{1}$ be the collection of $C^{3}$ Riemannian metrics
on $S$ that agree with $g$ to second order on $\partial\mathcal{C}_{i},$
$i=1,\dots,q.$ Then there exist a $C^{2}$ open neighborhood $\mathbb{H}_{2}$
of $g$ in the set of $C^{3}$ Riemannian metrics on $S,$ and a $C^{3}$
open neighborhood $\mathbb{H}_{3}$ of $g|\Upsilon$ in the set of
$C^{3}$ Riemannian metrics on $\Upsilon,$ such that the following
holds: If $h\in\mathbb{H}_{1}\cap\mathbb{H}_{2}$ and $h|\Upsilon\in\mathbb{H}_{3},$
then the geodesic flow for $h$ on $T^{1,h}S$ is ergodic with respect
to Liouville measure.
\begin{thm}
\label{thm:Cartan} Let $S$ be a compact surface with a real analytic
differentiable structure, and let $g$ be a $C^{\infty}$ Riemannian
metric on $S.$ Suppose that $\Gamma$ is a union of disjoint closed
real analytic curves on $S$ and there exists a neighborhood $\mathcal{U}$
of $\Gamma$ on which $g$ is real analytic. Then for any positive
integer $k$ there exists a real analytic metric $h$ on $S$ such
that $g$ and $h$ agree up to order $k$ on $\Gamma.$ Moreover,
$h$ can be taken arbitrarily close to $g$ in the $C^{\infty}$ topology.
\end{thm}
\end{thm}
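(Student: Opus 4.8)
The plan is to reduce the assertion about metrics to a relative approximation statement for functions, and then to derive the latter from Cartan's Theorem~B. First fix a real analytic embedding $\iota\colon S\hookrightarrow\mathbb{R}^{N}$, which exists for any compact real analytic surface (e.g.\ the standard embedding of $S^{2}$ in $\mathbb{R}^{3}$). Via $\iota$ and the Euclidean inner product, $TS$ and hence $T^{*}S$, and hence $\mathrm{Sym}^{2}T^{*}S$, become real analytic subbundles of trivial bundles; so $\mathrm{Sym}^{2}T^{*}S$ sits inside $S\times\mathbb{R}^{L}$ with a real analytic fiberwise orthogonal projection $P\colon S\times\mathbb{R}^{L}\to\mathrm{Sym}^{2}T^{*}S$. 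A $C^{\infty}$ metric $g$ then corresponds to a $C^{\infty}$ map $G=(G_{1},\dots,G_{L})\colon S\to\mathbb{R}^{L}$ with $P\circ G=G$, whose components $G_{j}$ are real analytic on $\mathcal{U}$. If each $G_{j}$ can be approximated in $C^{\infty}(S)$, arbitrarily closely, by a real analytic function $\widehat{G}_{j}$ with the same $k$-jet as $G_{j}$ along $\Gamma$, then $h:=P\circ\widehat{G}$ is a real analytic section of $\mathrm{Sym}^{2}T^{*}S$; it is $C^{\infty}$-close to $g$ because $P$ is real analytic, it is positive definite on the compact $S$ (being $C^{0}$-close to the metric $g$) hence a real analytic metric, and it agrees with $g$ up to order $k$ on $\Gamma$ because $P$ is smooth and $\widehat{G}-G$ vanishes to order $k+1$ along $\Gamma$. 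Thus it suffices to prove: \emph{if $S$ is a compact real analytic manifold, $\Gamma\subset S$ a closed real analytic submanifold, $f\in C^{\infty}(S)$ real analytic on a neighborhood of $\Gamma$, and $k$ a positive integer, then $f$ is a $C^{\infty}$-limit of real analytic functions on $S$ whose $k$-jet along $\Gamma$ equals that of $f$.}

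For this function statement we would first use the standard fact that real analytic functions are $C^{\infty}$-dense in $C^{\infty}(S)$ (Whitney, Grauert; via $\iota$ and polynomial approximation on $\iota(S)$) to choose a real analytic $f_{1}$ that is $C^{\infty}$-close to $f$. Then $u:=f-f_{1}$ is $C^{\infty}$-small on $S$ and real analytic near $\Gamma$, so its $k$-jet along $\Gamma$ is a well-defined global section of the coherent sheaf $\mathcal{O}_{S}/\mathcal{I}_{\Gamma}^{k+1}$, where $\mathcal{I}_{\Gamma}$ is the ideal sheaf of $\Gamma$ (coherence of $\mathcal{O}_{S}$ is the Oka--Cartan coherence theorem; $\mathcal{I}_{\Gamma}$ and its powers are coherent because $\Gamma$ is a real analytic submanifold, and $\mathcal{I}_{\Gamma}^{k+1}$ is exactly the sheaf of germs vanishing to order $k+1$ along $\Gamma$, with quotient the sheaf of $k$-jets along $\Gamma$). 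From the short exact sequence $0\to\mathcal{I}_{\Gamma}^{k+1}\to\mathcal{O}_{S}\to\mathcal{O}_{S}/\mathcal{I}_{\Gamma}^{k+1}\to0$ and the vanishing $H^{1}(S,\mathcal{I}_{\Gamma}^{k+1})=0$, which is Cartan's Theorem~B for the real analytic manifold $S$ \cite{Cartan} (proved by passing to a Stein complexification and applying Theorem~B there), the restriction map on global sections $H^{0}(S,\mathcal{O}_{S})\to H^{0}(S,\mathcal{O}_{S}/\mathcal{I}_{\Gamma}^{k+1})$ is surjective. Hence there is a real analytic $w$ on $S$ with the same $k$-jet as $u$ along $\Gamma$, and $h:=f_{1}+w$ is real analytic with the same $k$-jet along $\Gamma$ as $f$.

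The main obstacle we anticipate is not the existence of such an $h$ but the quantitative requirement that $h$ be $C^{\infty}$-close to $f$; equivalently, that the small jet of $u$ along $\Gamma$ admit a $C^{\infty}$-small real analytic lift $w$. What is needed is an open-mapping version of Theorem~B: the surjection $H^{0}(S,\mathcal{O}_{S})\to H^{0}(S,\mathcal{O}_{S}/\mathcal{I}_{\Gamma}^{k+1})$, between the space of real analytic functions on $S$ and the space of $k$-jets along $\Gamma$ of real analytic functions (each with its natural topology), is open, so a small jet has a small lift. One route is to run the proof of Theorem~B on a shrinking family of Stein neighborhoods of $S$ in its complexification while keeping track of sup-norm bounds, which produces a lift controlled in $C^{\infty}$ by the size of the jet; an alternative is to build $w$ term by term from the analytic Taylor expansion of $u$ along $\Gamma$ inside a real analytic tubular neighborhood, which reduces to repeatedly extending a small real analytic function from $\Gamma$ to a small real analytic function on $S$, again resting on the same estimates. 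Granting this, $w$, and hence $h=f_{1}+w$, is as $C^{\infty}$-close to $f$ as desired, which proves the function statement and therefore the theorem. (In the intended application $\Gamma=\bigcup_{i}\partial\mathcal{C}_{i}$ is a disjoint union of real analytic circles and $g$ is real analytic near $\Gamma$ by assumption~(2), so the hypotheses hold and the resulting real analytic metric $h$ agrees with $g$ up to order $k$ on every cap boundary.)
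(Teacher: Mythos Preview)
The paper does not actually prove either of these theorems: both are quoted from \cite{BurnsGerberII}, with the only comment being that ``the proof of Theorem~\ref{thm:Cartan} is based on Cartan's theorem~B.'' So there is no proof in the paper to compare against; you are reconstructing a result the authors simply cite.

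That said, a few remarks on your sketch, which addresses only Theorem~\ref{thm:Cartan} (you say nothing about the ergodicity statement, Theorem~\ref{thm:BurnsGerber}, which is much deeper and certainly not a consequence of Theorem~B). Your reduction from metrics to scalar functions via an analytic embedding and fiberwise projection is standard and correct, as is the use of the exact sequence $0\to\mathcal{I}_{\Gamma}^{k+1}\to\mathcal{O}_{S}\to\mathcal{O}_{S}/\mathcal{I}_{\Gamma}^{k+1}\to0$ together with $H^{1}(S,\mathcal{I}_{\Gamma}^{k+1})=0$ to produce \emph{some} real analytic lift $w$ of the $k$-jet of $u$. This matches the paper's one-line hint.

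The genuine gap is exactly where you flag it: you need not just surjectivity but a continuous right inverse (or at least openness) for the jet map, so that a small jet admits a small lift. You sketch two routes but do not carry either out, and ``granting this'' is doing real work. One clean way to close the gap, which you do not mention, is the open mapping theorem for Fr\'echet (or more generally webbed) spaces: the target $H^{0}(S,\mathcal{O}_{S}/\mathcal{I}_{\Gamma}^{k+1})$ is a finite-rank analytic bundle over the compact analytic $1$-manifold $\Gamma$, so its global sections form a Fr\'echet space, and the restriction map from $C^{\omega}(S)$ (a DFS space) is a continuous linear surjection between spaces in a category where open mapping holds. Alternatively, since here $\Gamma$ is a disjoint union of analytic circles, one can write the lift explicitly in an analytic tubular neighborhood as $w=\sum_{j=0}^{k}a_{j}(\tau_{1})\tau_{2}^{j}\,\chi(\tau_{2})$ with $a_{j}$ the (analytic) normal Taylor coefficients of $u$ and $\chi$ an analytic cutoff-like factor, and then globalize; this makes the smallness of $w$ transparent. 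Either completion would make your argument a full proof of Theorem~\ref{thm:Cartan}.
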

The main result of this paper is the following:
\begin{thm}
\label{thm:GeneralSecurity} If $S,$ $g,$ $\Upsilon,$ and $\mathbb{H}_{1}$
are as in Theorem \ref{thm:BurnsGerber}, then there exist a $C^{2}$
open neighborhood $\mathbb{H}_{2}'$ of $g$ in the set of $C^{3}$
Riemannian metrics on $S,$ and a $C^{3}$ open neighborhood $\mathbb{H}_{3}'$
of $g|\Upsilon$ in the set of $C^{3}$ Riemannian metrics on $\Upsilon,$
such that the following holds: If $h\in\mathbb{H}_{1}\cap\mathbb{H}_{2}'$
and $h|\Upsilon\in\mathbb{H}_{3}',$ then $(S,h)$ is totally insecure.
\end{thm}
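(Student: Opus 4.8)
The plan is to combine the three propositions advertised in the introduction. Fix a metric $h$ lying in the intersection of $\mathbb{H}_{1}$ with suitable $C^{2}$ and $C^{3}$ neighborhoods of $g$; by Theorem \ref{thm:BurnsGerber} (after possibly shrinking the neighborhoods) we may assume the geodesic flow for $h$ is ergodic, hence topologically transitive on $T^{1,h}S$. The first task is to produce the distinguished closed $h$-geodesic $\rho$: because the complement of the caps has negative curvature and the caps are ``small'' perturbations, one expects a closed geodesic $\rho$ avoiding all caps (so that $\rho$ is a hyperbolic closed orbit of the geodesic flow with transverse stable and unstable manifolds, via the non-uniform hyperbolicity and Lyapunov-function techniques of \cite{BarreiraPesin,KatokBurns}). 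Topological transitivity then gives, for each $(x,y)$, forward and backward semi-geodesics $\gamma^{+},\gamma^{-}$ emanating from $x$ and $y$ and asymptotic to $\rho$ — this is the content of Proposition \ref{pro:Asymptotic}, which I would invoke as a black box here.

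Next I would invoke Proposition \ref{pro:BangertGutkinProperty}: for every $(x,y)\in S\times S$ there is an infinite sequence of $h$-geodesics $(\gamma_{n})$ from $x$ to $y$ which agree with a fixed short initial arc and a fixed short terminal arc (of uniformly bounded total length, independent of $n$) and spend all the intervening time in an arbitrarily small tubular neighborhood of $\rho$, winding around $\rho$ more and more times as $n\to\infty$. This is the Bangert--Gutkin mechanism; the closing-up of $\gamma_{n}^{+}|[0,t_{n}^{+}]$ to $\gamma_{n}^{-}|[-t_{n}^{-},0]$ uses the shadowing/transversality of the stable and unstable manifolds of $\rho$, which is why real (non-uniform) hyperbolicity near $\rho$ is needed. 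I would take the existence and the stated properties of $(\gamma_{n})$ directly from that proposition.

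It then remains to rule out finite blocking for the pair $(x,y)$ using the sequence $(\gamma_{n})$. Suppose $P=\{z_{1},\dots,z_{m}\}\subset S\setminus\{x,y\}$ were a blocking set for all $h$-geodesics from $x$ to $y$; in particular it blocks every $\gamma_{n}$. Split $\gamma_{n}$ into its bounded initial arc, its long ``middle'' arc near $\rho$, and its bounded terminal arc. Since the initial and terminal arcs have lengths bounded independently of $n$, Proposition \ref{pro:AnalyticNonconcurrence} (applied to the infinitely many distinct geodesic segments obtained as initial arcs — or terminal arcs — of the $\gamma_{n}$, which have uniformly bounded length and a common endpoint) shows that for $n$ large no single point $z_{j}$, and hence no finite union of them lying off a null set, can meet all of these short arcs; more precisely, only finitely many of the $\gamma_{n}$ can be blocked on their initial-plus-terminal arcs by $P$. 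So for all large $n$, the blocking point must lie on the middle arc, i.e.\ inside the chosen tubular neighborhood $N_{\epsilon}$ of $\rho$. Letting $\epsilon\to 0$ along a subsequence of the construction, each $z_{j}$ would have to lie on $\mathrm{tr}(\rho)$ itself; but then $\rho$ together with the finite set $P\subset\mathrm{tr}(\rho)$ leads to a contradiction, either because one can choose the middle arcs of $\gamma_{n}$ to wind around $\rho$ while staying in $N_{\epsilon}\setminus P$ (the construction in Proposition \ref{pro:BangertGutkinProperty} has enough freedom to avoid finitely many prescribed points on $\rho$), or by a direct application of Proposition \ref{pro:AnalyticNonconcurrence} to the middle arcs themselves. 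Either way $P$ fails to block, so $(x,y)$ is insecure; since $(x,y)$ was arbitrary, $(S,h)$ is totally insecure.

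The main obstacle, and the place where almost all the real work sits, is establishing Proposition \ref{pro:BangertGutkinProperty} — producing the sequence $(\gamma_{n})$ with \emph{uniformly bounded} initial and terminal arcs for \emph{every} pair $(x,y)$ simultaneously. Unlike the negative-curvature case, here $(S,h)$ has conjugate points (inside the caps), so one cannot simply quote the absence of conjugate points; instead one must run the non-uniform hyperbolicity machinery near $\rho$, control the stable/unstable manifolds, and carry out the smooth joining of $\gamma_{n}^{+}$ and $\gamma_{n}^{-}$ quantitatively. A secondary but genuine difficulty is Proposition \ref{pro:AnalyticNonconcurrence}: because conjugate points do occur, ruling out a one-point blocker for infinitely many uniformly-bounded-length geodesics from $x$ requires the real analyticity of $h$ (an analytic geodesic segment through a fixed point $z$ is determined by finitely many data, and analyticity prevents the pathological concentration of infinitely many distinct such segments through $z$). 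In this theorem's proof itself, however, these are imported wholesale, so the argument here is the short bookkeeping above.
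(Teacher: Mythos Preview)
Your outline is close to the paper's, and you correctly identify Propositions~\ref{pro:Asymptotic}, \ref{pro:BangertGutkinProperty}, and \ref{pro:AnalyticNonconcurrence} as the three pillars. The application of Proposition~\ref{pro:AnalyticNonconcurrence} to the bounded initial and terminal arcs is right (this is half of the paper's Lemma~\ref{lem:AvoidsOmega}). But your endgame --- ruling out blockers that lie on $\mathrm{tr}(\rho)$ --- has a real gap.

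Your option~(b), applying Proposition~\ref{pro:AnalyticNonconcurrence} to the middle arcs, cannot work: that proposition controls geodesics of \emph{bounded} length from a \emph{fixed} starting point, whereas the middle arcs have lengths tending to infinity and no common endpoint. Your option~(a), that ``the construction has enough freedom to avoid finitely many prescribed points on $\rho$,'' is where the substance lies, and you have not identified the mechanism. The key is the strict inequality $0<\tau_{2}(\widetilde{\gamma}_{n}(t))$ in the conclusion of Proposition~\ref{pro:BangertGutkinProperty}: the \emph{lift} of the middle arc to $\widetilde{F}(\epsilon_{0})$ stays strictly on one side of $\widetilde{\rho}$. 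When $\rho$ is simple this immediately gives $\gamma_{n}([T,L_{n}-T])\cap\mathrm{tr}(\rho)=\emptyset$. But $\rho$ need not be simple (indeed, on $S^{2}$ with three caps it cannot be), and then the projection $\widehat{\pi}$ can send points with $\tau_{2}>0$ onto $\mathrm{tr}(\rho)$. The paper handles this in Lemma~\ref{lem:AvoidsOmega}: any such point is forced to lie within distance $C|\tau_{2}|<C\epsilon$ of the self-intersection set $\Sigma$ of $\rho$, so choosing $\epsilon$ small relative to the finite set $\Omega$ (specifically smaller than $\alpha_{1}$ and $\alpha_{2}/C$) makes the middle arc miss $\Omega$ entirely, whether the points of $\Omega$ lie on $\mathrm{tr}(\rho)$, in $\Sigma$, or off $\mathrm{tr}(\rho)$. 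Your ``let $\epsilon\to 0$'' detour to first locate the blocker on $\mathrm{tr}(\rho)$ is unnecessary once this is in place.

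The paper's final assembly also differs from yours. Rather than fix a putative blocking set $P$ and argue directly that infinitely many $\gamma_{n}$ miss it, the paper inductively builds --- using Lemma~\ref{lem:AvoidsOmega} with $\Omega$ equal to the (finite) set of pairwise interior intersections of the geodesics already selected --- an infinite subfamily $(\gamma_{n_{k}})$ in which \emph{no three are concurrent} except at $x$ and $y$. Then any point of $S\setminus\{x,y\}$ lies on at most two of them, so no finite set can block the whole family. Once Lemma~\ref{lem:AvoidsOmega} is available, your direct route (take $\Omega=P\cup\{x,y\}$) is actually shorter; the paper's ``no three concurrent'' argument buys the slightly stronger statement that a single sequence of geodesics defeats every finite $P$ simultaneously.
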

It follows from Theorem \ref{thm:Cartan} that the set of real analytic
metrics $h$ that satisfy the conclusions of Theorems \ref{thm:BurnsGerber}
and \ref{thm:GeneralSecurity} is nonempty. In particular, we obtain
the following corollary.
\begin{cor}
\label{cor:ExistenceAnalyticInsecure}If $S$ is a compact surface
with a real analytic differentiable structure, then there exist real
analytic metrics $h$ on $S$ such that $(S,h)$ is totally insecure.
\end{cor}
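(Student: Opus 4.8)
The plan is to deduce the corollary by combining Theorem~\ref{thm:GeneralSecurity} with Theorem~\ref{thm:Cartan}, using the existence of a suitable background metric on $S$. First I would invoke the fact (from Section~1 of \cite{BurnsGerberII}) that every compact real analytic surface $S$ carries a $C^{\infty}$ metric $g$ satisfying conditions (1) and (2) of Section~\ref{sec:Construction-of-Metrics}. Fix such a $g$, together with a collection of monotone curvature caps $\mathcal{C}_{1},\dots,\mathcal{C}_{q}$. Choose open neighborhoods $\Upsilon_{i}$ of $\partial\mathcal{C}_{i}$ with compact closure, set $\Upsilon=\cup_{i=1}^{q}\Upsilon_{i}$, and let $\mathbb{H}_{1}$ be as in Theorem~\ref{thm:BurnsGerber}. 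Applying Theorem~\ref{thm:GeneralSecurity} to this data produces a $C^{2}$ open neighborhood $\mathbb{H}_{2}'$ of $g$ in the $C^{3}$ metrics on $S$ and a $C^{3}$ open neighborhood $\mathbb{H}_{3}'$ of $g|\Upsilon$ with the stated total-insecurity property.

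It then remains to exhibit a real analytic metric $h$ lying in $\mathbb{H}_{1}\cap\mathbb{H}_{2}'$ with $h|\Upsilon\in\mathbb{H}_{3}'$. Here I would take $\Gamma=\cup_{i=1}^{q}\partial\mathcal{C}_{i}$, which is a union of disjoint closed real analytic curves, and recall that condition (2) forces $g$ to be real analytic on a neighborhood $\mathcal{U}$ of $\Gamma$. Theorem~\ref{thm:Cartan}, applied with $k=2$, then yields a real analytic metric $h$ on $S$ that agrees with $g$ up to order $2$ along each $\partial\mathcal{C}_{i}$, so that $h\in\mathbb{H}_{1}$, and that can be chosen arbitrarily $C^{\infty}$-close to $g$.

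Finally I would observe that, since $S$ is compact and each $\overline{\Upsilon_{i}}$ is compact, $C^{\infty}$-closeness of $h$ to $g$ on $S$ forces $h$ to be $C^{2}$-close to $g$ on $S$ and $h|\Upsilon$ to be $C^{3}$-close to $g|\Upsilon$. Choosing $h$ close enough in the application of Theorem~\ref{thm:Cartan} therefore places $h$ in $\mathbb{H}_{2}'$ and $h|\Upsilon$ in $\mathbb{H}_{3}'$, while the exact order-$2$ agreement along the $\partial\mathcal{C}_{i}$ keeps $h$ in $\mathbb{H}_{1}$. Theorem~\ref{thm:GeneralSecurity} then applies to $h$, so $(S,h)$ is totally insecure, which proves the corollary. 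The only point requiring care is the interplay between the \emph{exact} finite-order (jet) condition defining $\mathbb{H}_{1}$ and the \emph{open} conditions defining $\mathbb{H}_{2}'$ and $\mathbb{H}_{3}'$: one needs a single real analytic metric satisfying all three simultaneously, and this is precisely what Theorem~\ref{thm:Cartan} delivers, by providing prescribed finite-order agreement on $\Gamma$ together with $C^{\infty}$ approximation on $S$.
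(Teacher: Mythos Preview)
Your proposal is correct and follows exactly the approach indicated in the paper, which simply remarks that Theorem~\ref{thm:Cartan} guarantees the existence of real analytic metrics satisfying the hypotheses of Theorem~\ref{thm:GeneralSecurity}. You have spelled out the details---choosing $k=2$ so that $h\in\mathbb{H}_{1}$, and using $C^{\infty}$-closeness to land in the open sets $\mathbb{H}_{2}'$ and $\mathbb{H}_{3}'$---that the paper leaves implicit.
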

Theorem \ref{thm:GeneralSecurity} and Corollary \ref{cor:ExistenceAnalyticInsecure}
are only of interest in the case $S=S^{2}$ or $P^{2}(\mathbb{R}),$
since totally insecure metrics for positive genus surfaces were already
obtained in \cite{BangertGutkin}, as described in our introduction.
\begin{rem}
\label{rem:RP2} Suppose $P$ is a non-orientable compact real analytic
surface and $S$ is its orientable double cover, with covering map
$\pi:S\to P.$ It follows easily from the definition of total insecurity
that for any Riemannian metric $h_{P}$ on $P,$ $(P,h_{P})$ is totally
insecure if $(S,\pi^{*}h_{P})$ is totally insecure (see Proposition
1 in \cite{GutkinSchroeder}). This observation allows us to reduce
Theorem \ref{thm:GeneralSecurity} to the case in which the surface
is orientable.
\end{rem}
Throughout the rest of this paper we will assume that $S$ is a compact
orientable surface with a real analytic differentiable structure and
$g,$ $\Upsilon,$ $\mathbb{H}_{1},$ $\mathbb{H}_{2},$ and $\mathbb{H}_{3}$
are as in Theorem \ref{thm:BurnsGerber}. As observed above, the reader
may as well assume that $S=S^{2},$ although this does not matter
for the proof. Furthermore, we will assume that $h$ is a real analytic
metric with $h\in\mathbb{H}_{1}\cap\mathbb{H}_{2}'$ and $h|\Upsilon\in\mathbb{H}_{3}',$
where $\mathbb{H}_{2}'\subset\mathbb{H}_{2}$ is a $C^{2}$ open neighborhood
of $g$ in the set of $C^{3}$ Riemannian metrics on $S$, and $\mathbb{H}_{3}'\subset\mathbb{H}_{3}$
is a $C^{3}$ open neighborhood of $g|\Upsilon$ in the set of $C^{3}$
Riemannian metrics on $\Upsilon.$ Additional requirements on $\mathbb{H}_{2}'$
and $\mathbb{H}_{3}'$ will be imposed later in this section and in
Section \ref{sec:Cone Fields}.

Since $h$ agrees with $g$ to second order on $\partial\mathcal{C}_{i},$
for $i=1,\dots,q,$ each $\partial\mathcal{C}_{i}$ is the trace of
a closed geodesic for $h$, as it is for $g.$ We require $\mathbb{H}_{2}'$
and $\mathbb{H}_{3}'$ to be sufficiently small so that for $h\in\mathbb{H}_{1}\cap\mathbb{H}_{2}',$
and $h|\Upsilon\in\mathbb{H}_{3}',$ the curvature for $h$ is positive
in $\interior(\cup_{i=1}^{q}\mathcal{C}_{i})$ and negative in $\mathcal{N},$
which is defined by \begin{equation}
\mathcal{N}=S\setminus\left(\cup_{i=1}^{q}\mathcal{C}_{i}\right).\label{eq:NegativeCurvReg}\end{equation}

For our argument, we need a closed $h$-geodesic $\rho$ such that
for any $(x,y)\in S\times S,$ there is a family of $h$-geodesics
$(\gamma_{n})_{n=1,2,\dots}$ which accumulate near $\rho,$ as described
in Proposition \ref{pro:BangertGutkinProperty}. Any simple closed
geodesic along one of $\partial\mathcal{C}_{i},$ $i\in\{1,\dots,q\},$
and any closed geodesic whose trace lies in $\mathcal{N}$ could serve
as $\rho.$ In our argument, we choose to work with a closed geodesic
in $\mathcal{N},$ because the estimates required to prove the analog
of Proposition \ref{pro:Asymptotic} for a geodesic along one of $\partial\mathcal{C}_{i}$
are more difficult, due to the fact that the curvature of the surface
vanishes on $\partial\mathcal{C}_{i}.$ (Proposition \ref{pro:Asymptotic}
remains true for $\rho$ replaced by a geodesic along one of $\partial\mathcal{C}_{i}$,
except in the last line we would either have to replace $\tau_{2}(\widehat{\gamma}_{v_{+}})$
and $\tau_{2}(\widehat{\gamma}_{v_{-}})$ by $|\tau_{2}(\widehat{\gamma}_{v_{+}})|$
and $|\tau_{2}(\widehat{\gamma}_{v_{-}})|,$ respectively, or require
that the Fermi coordinates along $\partial\mathcal{C}_{i}$ be chosen
so that $\tau_{2}>0$ for points near $\partial\mathcal{C}_{i}$ that
lie in $\mathcal{N}.$ This modification is needed, because no geodesic
can be asymptotic to a geodesic along $\partial\mathcal{C}_{i}$ while
remaining in $\interior\mathcal{C}_{i}.$ No additional difficulties
in the proof of Theorem \ref{thm:GeneralSecurity} are caused by this
modification.)

We can find a closed $g$-geodesic $\rho_{0}$ in $\mathcal{N}$ by
applying the Birkhoff curve-shortening procedure \cite{Birkhoff,Croke}
(for the metric $g)$ to a closed curve $\alpha$ in $\mathcal{N}$
that is not homotopic within $\mbox{\ensuremath{\overline{\mathcal{N}}}}$
to a point or to any of the boundary components of $\overline{\mathcal{N}}.$
Since each boundary component of $\overline{\mathcal{N}}$ consists
of the trace of a simple closed geodesic, it follows that for any
two points $x,y\in\overline{\mathcal{N}}$ with dist$_{g}(x,y)<r,$
where $r$ is the injectivity radius of $(S,g),$ the length minimizing
$g$-geodesic $\gamma$ from $x$ to $y$ must have tr$(\gamma)\subset\overline{\mathcal{N}.}$
If we start the curve-shortening procedure by partitioning $\alpha$
into segments of length less than $r,$ then all of the curves obtained
from $\alpha$ with this procedure, as well as the limiting curve,
remain in $\overline{\mathcal{N}.}$ Moreover, all of these curves
are homotopic to $\alpha$ within $\overline{\mathcal{N}}.$ The limiting
curve is a closed $g$-geodesic $\rho_{0}$ in $\mathcal{N}.$ Since
the orbit of the geodesic glow for $g$ along $\rho_{0}$ is transversally
hyperbolic, it follows that if $\mathbb{H}_{2}'$ is a sufficiently
small $C^{2}$ neighborhood of $g,$ there is a closed $h$-geodesic
$\rho=\rho(h)$ that is $C^{3}$ close to $\rho$ (although we only
need it to be $C^{0}$ close to $\rho).$ The neighborhood $\mathbb{H}_{2}'$
can be chosen so that dist$_{h}\big(\text{tr}(\rho),\cup_{i=1}^{q}\mathcal{C}_{i}\big)$
is uniformly bounded away from $0$ for all $h\in\mathbb{H}_{2}'.$

It is possible that there is no \emph{simple} closed geodesic in $\mathcal{N}.$
For example, if $S$ is $S^{2}$ with three caps, then any simple
closed geodesic in $\mathcal{N}$ would be homotopic to the boundary
of one of the caps, which is impossible by the Gauss-Bonnet Theorem.

There is another type of closed geodesic that may occur, namely one
that passes through the interior of one or more of the caps. However,
the conjugate points that occur along such a geodesic prevent it from
being a suitable choice for $\rho$ in our argument.

Henceforth, unless otherwise specified, we will assume that geodesics,
geodesic flow, distances, lengths of vectors, curvature, etc., for
$S$ are taken with respect to $h.$ For $x\in S$ and $v\in T_{x}^{1}S,$
let $\gamma_{v}$ denote the geodesic with $\gamma_{v}(0)=x$ and
$\gamma_{v}'(0)=v.$ Let $\varphi^{t},$ $t\in\mathbb{R},$ be the
geodesic flow on $T^{1}S.$

\section{Stable and Unstable Cone Fields\label{sec:Cone Fields}}

We will define stable and unstable cone fields at each $v\in T^{1}S.$
These cone fields are essentially the same as those in \cite{BurnsGerberII},
except that the definitions are extended to be valid inside the caps.
The idea of extending the cone fields into the caps and obtaining
continuous stable and unstable line fields (as in our Lemma \ref{lem:ContinuousLineFields})
already appeared in V. Donnay's proof of the existence of $C^{\infty}$
metrics on $S^{2}$ with ergodic geodesic flow \cite{Donnay}, but
the real analytic case is different, because we no longer have invariance
of the $\mathcal{K}^{+}$ cones (see Definition \ref{def:K+Cones})
from the time that a geodesic enters a cap until it exits the cap.

The Riemannian metric $h=\langle\cdot,\cdot\rangle$ on $S$ induces
a Riemannian metric on $TS:$

\[
\langle\langle\xi,\eta\rangle\rangle=\langle\xi_{H},\eta_{H}\rangle+\langle\xi_{V},\eta_{V}\rangle,\]
where $H$ and $V$ denote the horizontal and vertical components,
respectively (see, e.g., Chapter 3, Exercise 2 in \cite{doCarmo}).
We will identify $\xi\in TTS$ with $(\xi_{H},\xi_{V}).$ If $x\in S,$
$w\in T_{x}^{1}S,$ and $\xi\in T_{w}T^{1}S,$ then $\langle\xi_{V},w\rangle=0.$
For $w\in T_{x}^{1}S,$ we let $\mathcal{P}(w)$ be the two-dimensional
subspace of $T_{w}T^{1}S$ defined by \begin{equation}
\mathcal{P}(w)=\{\xi\in T_{w}T^{1}S:\langle\xi_{H},w\rangle=0\}.\label{eq:Distribution}\end{equation}
 We define $H,V$ \emph{coordinates} on $\mathcal{P}(w)$ by choosing
$N\in T_{x}^{1}S$ such that $\langle N,w\rangle=0$ and letting $\xi=(\xi_{H},\xi_{V})\in\mathcal{P}(w)$
have coordinates $(\lambda_{1},\lambda_{2})$ if $\xi_{H}=\lambda_{1}N$
and $\xi_{V}=\lambda_{2}N.$ If $N$ is replaced by $-N,$ then the
coordinates change from $(\lambda_{1},\lambda_{2})$ to $(-\lambda_{1},-\lambda_{2}),$
but this does not matter for the cones and the lines through $0$
in $\mathcal{P}(w)$ that we consider below. The distribution $w\mapsto\mathcal{P}(w),$
$w\in T^{1}S,$ is orientable, because we may specify that the ordered
pair of vectors given in $H,V$ coordinates by $(1,0),(0,1)$ is positively
oriented. This orientation does not depend on the choice of $N.$

Suppose $w_{0}\in T_{x}^{1}S,$ $\xi\in T_{w_{0}}T^{1}S,$ and $w(s)\in T_{p(s)}^{1}S,$
$-s_{0}<s<s_{0},$ is a curve in $T^{1}S$ that is tangent to $\xi$
at $w_{0}=w(0)$ when $s=0.$ Then $J(t)=(d/ds)|_{s=0}\gamma_{w(s)}(t)$
is a Jacobi field along $\gamma_{w_{0}}$ with $J(0)=\xi_{H},$ $J'(0)=\xi_{V}$
and $((d\varphi^{t}(\xi))_{H},(d\varphi^{t}(\xi))_{V})=(J(t),J'(t)).$
In particular, if $\xi\in\mathcal{P}(w_{0}),$ then $\langle J(0),\gamma_{w_{0}}'(0)\rangle=0=\langle J'(0),\gamma_{w_{0}}'(0)\rangle,$
which implies that $\langle J(t),\gamma'_{w_{0}}(t)\rangle\equiv0.$
Thus $\langle(d\varphi^{t}(\xi))_{H},d\varphi^{t}(w_{0})\rangle\equiv0,$
i.e., the distribution $w\mapsto\mathcal{P}(w)$ is invariant under
the geodesic flow. Moreover, the orbits of the geodesic flow are orthogonal
to the distribution $w\mapsto\mathcal{P}(w),$ since $\eta_{H}=w_{0}$
and $\eta_{V}=0$ if $\eta=(D/dt)|_{t=0}\varphi^{t}(w_{0}).$

Now assume that the curve $w(s),$ as above, is a $C^{1}$ regular
curve in $T^{1}S$ that is everywhere tangent to the distribution
$\mathcal{P}$ (i.e., $w'(s)\in\mathcal{P}(w(s))$ for $-s_{0}<s<s_{0}).$
Then $w(s)$ is a unit normal field along the curve $p(s)$ in $S.$
(Throughout this paper, a \emph{regular} curve will mean a curve whose
derivative is nowhere vanishing.) The \emph{signed curvature $k(s)$
of $p(s)$ with respect to the unit normal field $w(s)$} is defined
by \begin{equation}
k(s)=-\frac{1}{|p'(s)|^{2}}\left\langle \frac{D(p'(s))}{ds},w(s)\right\rangle =\frac{1}{|p'(s)|^{2}}\left\langle p'(s),\frac{D(w(s))}{ds}\right\rangle .\label{eq:DefSignedCurvature}\end{equation}
Our choice of sign is such that unstable {[}stable{]} curves (to be
defined in Section \ref{sec:LyapunovLineFields}) that lie outside
the caps have positive {[}negative{]} curvature. The second equality
in (\ref{eq:DefSignedCurvature}) follows from the fact that $\langle p'(s),w(s)\rangle\equiv0.$

If $J(t)$ is a perpendicular Jacobi field along $\gamma_{w_{0}},$
we may write $J(t)=j(t)N(t),$ where $N(t)$ is a continuous unit
normal field along $\gamma_{w_{0}}(t),$ and $j(t)$ satisfies the
scalar Jacobi equation \[
j''(t)+K(\gamma_{w_{0}}(t))j(t)=0,\]
 where $K$ is the Gaussian curvature. If $w(s),$ $-s_{0}<s<s_{0},$
with $w(0)=w_{0},$ is a $C^{1}$ regular curve in $T^{1}S$ that
is everywhere tangent to $\mathcal{P}$ and $J(t)=(d/ds)\vert_{s=0}\gamma_{w(s)}(t)=j(t)N(t),$
then\[
J'(t)=(D/dt)(d/ds)|_{s=0}\gamma_{w(s)}(t)=(D/ds)|_{s=0}(d\varphi^{t}(w(s))),\]
 and $j'(t)/j(t)=\langle J(t),J'(t)\rangle/|J(t)|^{2}.$ From the
second version of the formula for $k(s)$ in (\ref{eq:DefSignedCurvature}),
it follows that $j'(t)/j(t)$ is equal to the signed curvature at
$s=0$ of the curve $s\mapsto\gamma_{w(s)}(t)$ with respect to the
unit normal field $\varphi^{t}(w(s)).$ If $m(t)=j'(t)/j(t),$ the
slope of $d\varphi^{t}(w'(0))$ in the $H,V$ coordinate system, then
$m(t)$ satisfies the Riccati equation\begin{equation}
m^{2}(t)+m'(t)+K(\gamma_{w_{0}}(t))=0.\label{eq:Riccati}\end{equation}
 The Riccati equation can be transformed by setting $\theta=\tan^{-1}(m),$
to obtain

\begin{equation}
\theta'(t)+\sin^{2}(\theta(t))+K(\gamma_{w_{0}}(t))\cos^{2}(\theta(t))=0.\label{eq:TransformedRiccati}\end{equation}
 Here $\theta\in\mathbb{R}/\pi\mathbb{Z},$ which we identify with
$(-\pi/2,\pi/2].$

\begin{figure}[htbp]\begin{center}
\begin{picture}(0,0)%
\includegraphics[scale=0.7]{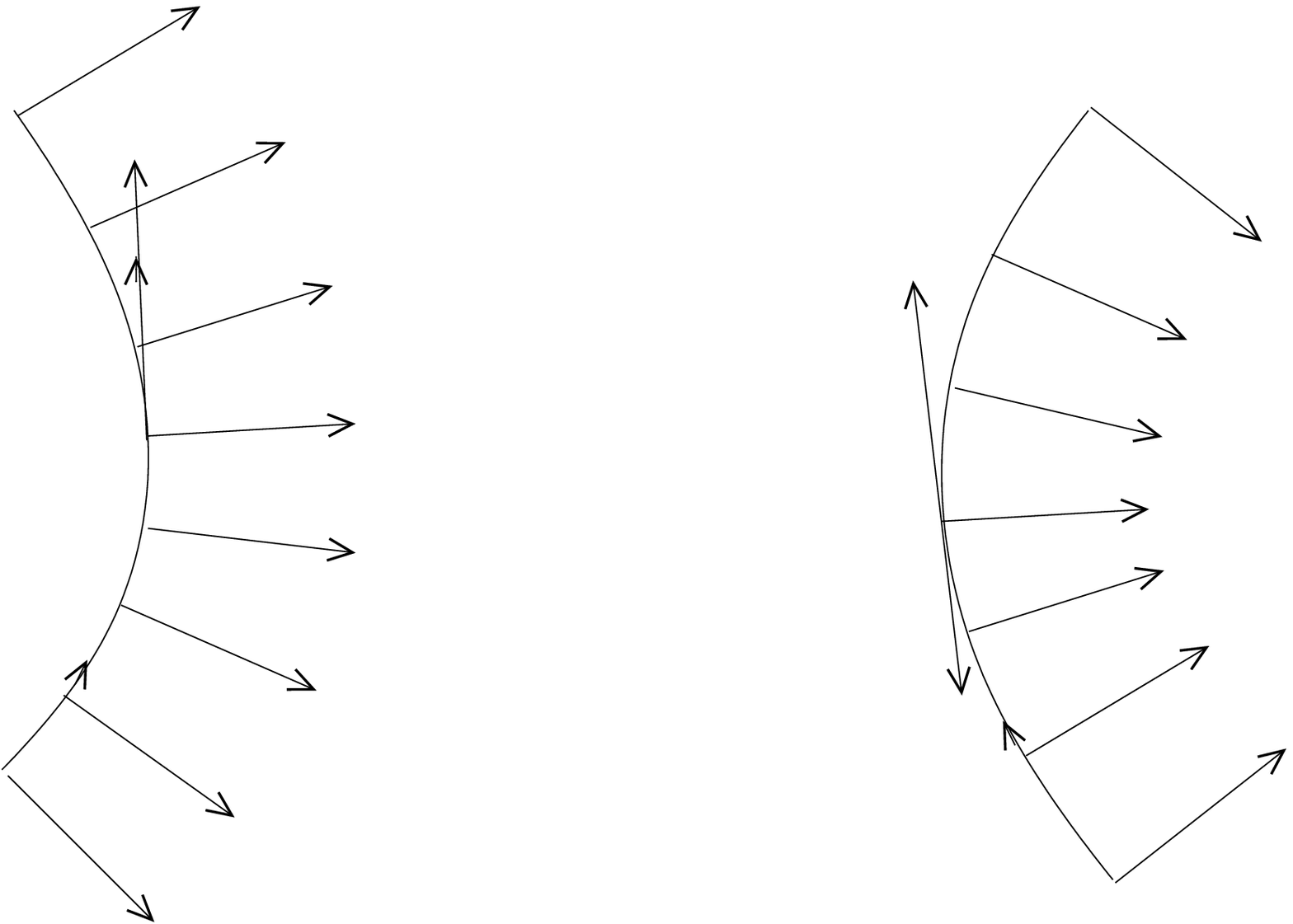}%
\end{picture}%
\setlength{\unitlength}{3947sp}%
\begingroup\makeatletter\ifx\SetFigFont\undefined%
\gdef\SetFigFont#1#2#3#4#5{%
  \reset@font\fontsize{#1}{#2pt}%
  \fontfamily{#3}\fontseries{#4}\fontshape{#5}%
  \selectfont}%
\fi\endgroup%
\begin{picture}(7915,3750)(2014,-8360) \put(2700,-5611){\makebox(0,0)[lb]{\smash{{\SetFigFont{12}{14.4}{\familydefault}{\mddefault}{\updefault}{$\xi_V$}%
}}}} \put(3480,-5640){\makebox(0,0)[lb]{\smash{{\SetFigFont{12}{14.4}{\familydefault}{\mddefault}{\updefault}{$w(s)$}%
}}}} \put(7180,-5900){\makebox(0,0)[lb]{\smash{{\SetFigFont{12}{14.4}{\familydefault}{\mddefault}{\updefault}{$w(s)$}%
}}}} \put(3580,-6240){\makebox(0,0)[lb]{\smash{{\SetFigFont{12}{14.4}{\familydefault}{\mddefault}{\updefault}{$w_0$}%
}}}} \put(7000,-6620){\makebox(0,0)[lb]{\smash{{\SetFigFont{12}{14.4}{\familydefault}{\mddefault}{\updefault}{$w_0$}%
}}}} \put(2700,-5050){\makebox(0,0)[lb]{\smash{{\SetFigFont{12}{14.4}{\familydefault}{\mddefault}{\updefault}{$\xi_H$}%
}}}} \put(5650,-5600){\makebox(0,0)[lb]{\smash{{\SetFigFont{12}{14.4}{\familydefault}{\mddefault}{\updefault}{$\xi_H$}%
}}}} \put(5900,-7400){\makebox(0,0)[lb]{\smash{{\SetFigFont{12}{14.4}{\familydefault}{\mddefault}{\updefault}{$\xi_V$}%
}}}}

\end{picture}%
\caption{The tangent vector $\xi=(\xi_H,\xi_V)$ at $w_0$  to the curve $w(s)$  in $T^1S$ is in ${\mathcal{K}}^+_{w_0}$ for the curve on the left and is in ${\mathcal{K}}^-_{w_0}$ for the curve on the right.}  \label{figure:3} \end{center} \end{figure}
\begin{defn}
\label{def:K+Cones} For $w_{0}\in T_{x}^{1}S,$ we define cones $\mathcal{K}_{w_{0}}^{+},\mathcal{K}_{w_{0}}^{-}\subset\mathcal{P}(w_{0})$
by \[
\mathcal{K}_{w_{0}}^{+}=\{\xi\in\mathcal{P}(w_{0}):\langle\xi_{H},\xi_{V}\rangle\ge0\}\text{\ and\ }\mathcal{K}_{w_{0}}^{-}=\{\xi\in\mathcal{P}(w_{0}):\langle\xi_{H},\xi_{V}\rangle\le0\}.\]
The $\mathcal{K}_{w_{0}}^{+}$ cones correspond to perpendicular Jacobi
fields with $jj'\ge0.$ If the curvature is nonpositive along $\gamma_{w_{0}}(t)$
for $t_{0}\le t\le t_{1},$ then $(j(t)j'(t))'=(j'(t))^{2}-K(\gamma_{w_{0}}(t))(j(t))^{2}\ge0$
for $t_{0}\le t\le t_{1}.$ This implies that \begin{equation}
d\varphi^{t_{1}-t_{0}}\mathcal{K}_{\gamma'_{w_{0}}(t_{0})}^{+}\subset\mathcal{K}_{\gamma'_{w_{0}}(t_{1})}^{+}\text{\ and\ }d\varphi^{t_{0}-t_{1}}\mathcal{K}_{\gamma'_{w_{0}}(t_{1})}^{-}\subset\mathcal{K}_{\gamma'_{w_{0}}(t_{0})}^{-}.\label{eq:NegCurvConeInvariance}\end{equation}
If, in addition, the curvature is negative at a point $\gamma_{w_{0}}(t),$
for some $t\in[t_{0},t_{1}],$ then \begin{equation}
d\varphi^{t_{1}-t_{0}}\mathcal{K}_{\gamma'_{w_{0}}(t_{0})}^{+}\subset\interior\mathcal{K}_{\gamma'_{w_{0}}(t_{1})}^{+}\text{\ and\ }d\varphi^{t_{0}-t_{1}}\mathcal{K}_{\gamma'_{w_{0}}(t_{1})}^{-}\subset\interior\mathcal{K}_{\gamma'_{w_{0}}(t_{0})}^{-},\label{eq:StrictNegConeInv}\end{equation}
where $\interior\mathcal{K},$ for a cone $\mathcal{K}\subset\mathcal{P}(w_{1}),$
means the topological interior of $\mathcal{K}$ within $\mathcal{P}(w_{1})$
together with $0\in T_{w_{1}}T^{1}S.$

For each cap $\mathcal{C}_{i},$ $i=1,\dots,q,$ we choose closed
disks $\mathcal{D}_{i}$ and $\mathcal{E}_{i}$ in $S$ that are radially
symmetric about the center of $\mathcal{C}_{i}$ for the $C^{\infty}$
metric $g$ such that $\mathcal{C}_{i}\subset\interior\mathcal{D}_{i}$
and $\mathcal{D}_{i}\subset\interior\mathcal{E}_{i}.$ We require
that $\mathcal{E}_{i}\cap\mathcal{E}_{j}=\emptyset$ if $i\ne j,$
and that the closed $h$-geodesic $\rho$ constructed in Section \ref{sec:Construction-of-Metrics}
lie in $S\setminus(\cup_{i=1}^{q}\mathcal{E}_{i}).$ Since the curvature
for $g$ is negative in $\mathcal{E}_{i}\setminus\mathcal{C}_{i},$
dist$_{g}(\gamma(t),\partial\mathcal{C}_{i})$ is a strictly convex
function of $t$ for any $g$-geodesic $\gamma$ in $\mathcal{E}_{i}\setminus\mathcal{C}_{i}.$
This implies that there exists $\tilde{\beta}_{i}>0$ such that for
any $g$-geodesic $\gamma$ with $\gamma(0)\in\partial\mathcal{D}_{i}$
and $\gamma'(0)$ either tangent to $\partial\mathcal{D}_{i}$ or
$\gamma'(0)$ pointing strictly out of $\mathcal{D}_{i}$ (i.e., $\gamma'(0)$
and $\mathcal{D}_{i}$ are on opposite sides of the tangent line to
$\partial\mathcal{D}_{i}$ at $\gamma(0))$, we have $\gamma((0,\tilde{\beta}_{i}))\subset S\setminus\mathcal{D}_{i}.$
We will assume that $\mathbb{H}_{2}'$ is sufficiently small (i.e.,
$h$ is sufficiently close to $g$ in the $C^{2}$ topology) such
that the analogous property holds with $g$-geodesics replaced by
$h$-geodesics and $\tilde{\beta}_{i}$ replaced by some $\beta_{i}>0.$
We will refer to this property (for the $h$-metric) as the \emph{strong
convexity} of $\mathcal{D}_{i}.$
\end{defn}
If $x\in\interior\mathcal{C}_{i}$ and $v\in T_{x}^{1}S,$ then there
exist $a,b,\widehat{a},\widehat{b}$ with $a<\widehat{a}<0<\widehat{b}<b,$
such that $\gamma_{v}(t)\in\interior\mathcal{C}_{i}$ for $t\in(\widehat{a},\widehat{b}),$
$\gamma_{v}(t)\in\interior(\mathcal{D}_{i}\setminus\mathcal{C}_{i})$
for $t\in(a,\widehat{a})\cup(\widehat{b},b),$ and $\gamma_{v}(a),$
$\gamma_{v}(b)\in\partial\mathcal{D}_{i}.$ That is, $\gamma_{v}$
exits $\mathcal{C}_{i}$ in both positive and negative time, and once
it exits $\mathcal{C}_{i}$ (in either positive or negative time)
it exits $\mathcal{D}_{i}$ without first re-entering $\mathcal{C}_{i}.$
This follows from Propositions 2.4 and 4.3 in \cite{BurnsGerberII},
provided that $g$ and $h$ are sufficiently close in the $C^{2}$
topology, and $g|\Upsilon_{i}$ and $h|\Upsilon_{i}$ are sufficiently
close in the $C^{3}$ topology, where $\Upsilon_{i}$ is as in Theorem
\ref{thm:BurnsGerber}.
\begin{lem}
\label{lem:ImprovedBurnsGerber}Let $\mathcal{C}=\mathcal{C}_{i}$
and $\mathcal{D}=\mathcal{D}_{i}$ for some $i\in\{1,\dots,q\}$ be
as above. Let $\Upsilon$ be as in Theorem \ref{thm:BurnsGerber}.
If $\mathbb{H}_{2}'$ and $\mathbb{H}_{3}'$ are sufficiently small
(i.e., $g$ and $h$ are sufficiently close in the $C^{2}$ topology,
and $g|\Upsilon$ and $h|\Upsilon$ are sufficiently close in the
$C^{3}$ topology), then for any geodesic $\gamma$ in $(\mathcal{D},h)$
with $\gamma(t)\in\interior\mathcal{D}$ for $t\in(a,b)$ and $\gamma(a),\gamma(b)\in\partial\mathcal{D},$
we have

\begin{equation}
d\varphi_{\gamma'(a)}^{b-a}(\mathcal{K}_{\gamma'(a)}^{+})\subset\mathcal{K}_{\gamma'(b)}^{+}.\label{eq:CapConeInvariance}\end{equation}
 Moreover, if $\gamma(t)\in\interior\mathcal{C}$ for $t\in(\widehat{a},\widehat{b})$
and $\gamma(\widehat{a}),\gamma(\widehat{b})\in\partial\mathcal{C},$
where $a<\widehat{a}<\widehat{b}<b,$ then \begin{equation}
d\varphi_{\gamma'(a)}^{\widehat{b}-a}(\mathcal{K}_{\gamma'(a)}^{+})\subset\mathcal{K}_{\gamma'(\widehat{b})}^{+}.\label{eq:StrongCapConeInvariance}\end{equation}
\end{lem}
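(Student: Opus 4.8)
The plan is to track the behavior of the slope $m(t)=j'(t)/j(t)$ (equivalently the angle $\theta(t)=\tan^{-1}m(t)$) along $\gamma$ using the Riccati comparison governed by \eqref{eq:Riccati}, and to combine the negative-curvature estimates in the region $\mathcal{D}\setminus\mathcal{C}$ with the crude but sign-definite behavior inside $\mathcal{C}$, exactly as in \cite{BurnsGerberII}. Recall that a vector $\xi\in\mathcal{P}(\gamma'(a))$ lies in $\mathcal{K}^{+}$ iff the associated perpendicular Jacobi field has $j(a)j'(a)\ge 0$, i.e. $m(a)\ge 0$ (allowing $m=\pm\infty$ when $j(a)=0$, which we handle by the convention that $\mathcal{K}^{+}$ is closed and includes the ``vertical'' direction $\xi_{H}=0$). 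So \eqref{eq:CapConeInvariance} is the assertion: if $m(a)\ge 0$ then $m(b)\ge 0$, for every geodesic segment that enters $\interior\mathcal{D}$ at time $a$ and first returns to $\partial\mathcal{D}$ at time $b$. Likewise \eqref{eq:StrongCapConeInvariance} says $m(a)\ge 0 \Rightarrow m(\widehat b)\ge 0$.

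First I would decompose the parameter interval. By the discussion preceding the lemma (the consequence of Propositions 2.4 and 4.3 of \cite{BurnsGerberII}, and by strong convexity of $\mathcal{D}$), a geodesic that is in $\interior\mathcal{D}$ on $(a,b)$ with endpoints on $\partial\mathcal{D}$ meets $\mathcal{C}$ in at most one subinterval: either $\mathrm{tr}(\gamma|[a,b])\cap\mathcal{C}=\emptyset$, or there are $a<\widehat a<\widehat b<b$ with $\gamma(t)\in\interior\mathcal{C}$ for $t\in(\widehat a,\widehat b)$ and $\gamma$ in $\interior(\mathcal{D}\setminus\mathcal{C})$ on $(a,\widehat a)\cup(\widehat b,b)$. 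In the first case the curvature is negative on all of $[a,b]$ (since $\mathcal{D}\setminus\mathcal{C}\subset\mathcal{N}$ has negative curvature for $h$, by our standing assumption on $\mathbb{H}_2'$), so \eqref{eq:NegCurvConeInvariance}--\eqref{eq:StrictNegConeInv} give the conclusion immediately, in fact with $\interior\mathcal{K}^{+}$ on the right. So assume we are in the second case. On $[a,\widehat a]$ the curvature is negative, hence $m(a)\ge 0 \Rightarrow m(\widehat a)\ge 0$ by \eqref{eq:NegCurvConeInvariance}. It remains to carry the sign of $m$ across the cap, i.e. to show $m(\widehat a)\ge 0 \Rightarrow m(\widehat b)\ge 0$; combined with the negative-curvature step on $[\widehat b,b]$ this yields both \eqref{eq:CapConeInvariance} and \eqref{eq:StrongCapConeInvariance}.

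The heart of the matter is therefore the passage through $\mathcal{C}$, and this is where the positive curvature makes the $\mathcal{K}^{+}$ cone non-invariant, so a direct Riccati monotonicity argument fails and one must use the radial symmetry of $\mathcal{C}$. The key point, taken from \cite{BurnsGerberII}: inside a radially symmetric cap, a geodesic entering along $\gamma'(\widehat a)$ and the ``radial'' Jacobi field geometry let one estimate how much the angle $\theta$ can decrease while $\gamma\subset\mathcal{C}$, and the monotone-curvature hypothesis (together with $\gamma$ exiting $\mathcal{D}$ without re-entering $\mathcal{C}$) forces the subsequent negative-curvature stretch $[\widehat b,b]$, or already the stretch $(\widehat a,\widehat b)$ relative to the geometry at $\widehat b$, to more than compensate. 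Concretely I would (i) use the strong convexity of $\mathcal{D}$ to get a uniform lower bound $\widehat b - a \ge$ (some multiple of $\beta_i$), or rather a uniform lower bound on the time spent in $\mathcal{D}\setminus\mathcal{C}$ before $\widehat a$, giving via the negative-curvature Riccati estimate a definite \emph{positive} lower bound $m(\widehat a)\ge \varepsilon_i > 0$ that is independent of the initial $\xi$; (ii) bound from below the value of $j$ at the cap entrance relative to $j'$, and use the scalar Jacobi equation $j'' = -Kj$ with $K\ge 0$ bounded above on $\mathcal{C}$ together with a diameter bound on $\mathcal{C}$ to control $j$ and $j'$ through $[\widehat a,\widehat b]$, showing $j(\widehat b)j'(\widehat b)\ge 0$ (i.e. $m(\widehat b)\ge 0$) provided $m(\widehat a)$ was at least $\varepsilon_i$ and the cap is traversed in bounded time; this is where the precise quantitative choices in \cite{BurnsGerberII} (bounding the chord-length of a geodesic in $\mathcal{C}$, and the maximum curvature in $\mathcal{D}_i$) enter, and it is the step I expect to be the main obstacle, since it requires the estimates to be robust under the $C^2$ (resp.\ $C^3$ on $\Upsilon$) perturbation from $g$ to $h$ — hence the need to shrink $\mathbb{H}_2'$ and $\mathbb{H}_3'$. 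Once $m(\widehat b)\ge 0$ is established, \eqref{eq:NegCurvConeInvariance} on $[\widehat b,b]$ finishes \eqref{eq:CapConeInvariance}, and stopping at $\widehat b$ gives \eqref{eq:StrongCapConeInvariance}. Throughout, the degenerate case $j(t_0)=0$ (vertical slope) is handled by noting that immediately afterward $jj'>0$ in negative curvature and $jj'$ cannot change sign in nonnegative curvature once it is positive, so the closed cone $\mathcal{K}^{+}$ is preserved; I would phrase the whole argument in terms of $\theta\in(-\pi/2,\pi/2]$ to avoid the $m=\infty$ bookkeeping.
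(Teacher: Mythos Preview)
Your decomposition of $[a,b]$ and the treatment of the case $\mathrm{tr}(\gamma)\cap\mathcal{C}=\emptyset$ are fine, and you are right that the whole content is in carrying $m\ge 0$ across the cap. But the two-step scheme you propose for that part has a genuine gap. Step (i) does not give a uniform $m(\widehat a)\ge\varepsilon_i>0$: the curvature of $h$ vanishes on $\partial\mathcal{C}$ (since $h=g$ to second order there and $K_g=s$ near $\partial\mathcal{C}$), so a geodesic crossing $\partial\mathcal{C}$ at a small angle spends a long time in the region where $|K|$ is tiny, and there the Riccati solution relaxes toward $\sqrt{|K|}\to 0$; thus $m(\widehat a)$ can be made arbitrarily small by taking the crossing angle small. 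Step (ii) then cannot be rescued by crude diameter and $\max K$ bounds: geodesics through $\mathcal{C}$ carry conjugate points, so $\theta$ can rotate by an amount close to $\pi$ inside the cap, far more than any small margin $\varepsilon_i$ could absorb.

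The paper's argument is organized differently, and the difference is the point. One first uses Proposition~2.7 of \cite{BurnsGerberII}, which exploits the radial symmetry and monotone curvature of the cap to show that for the \emph{reference} metric $g$ the cone $\mathcal{K}^{+}$ is preserved \emph{exactly} from $\widehat a$ to $\widehat b$ --- no margin is needed at all. The role of the negative-curvature stretch $[a,\widehat a]$ is then only to absorb the \emph{small} error coming from replacing $g$ by the nearby metric $h$; Propositions~4.8--4.10 of \cite{BurnsGerberII} show that this perturbation error and the available negative-curvature gain are compatible (in particular both degenerate in the right way as the crossing angle at $\partial\mathcal{C}$ tends to zero), and that already the pre-cap stretch suffices, which is exactly \eqref{eq:StrongCapConeInvariance}. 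Your outline would be repaired by swapping the ``build a uniform margin, then survive the cap with crude bounds'' plan for this ``exact for $g$, then perturb to $h$'' plan.
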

\begin{proof}
If tr$(\gamma)$ does not intersect $\mathcal{C},$ then (\ref{eq:CapConeInvariance})
follows from (\ref{eq:NegCurvConeInvariance}), because the curvature
is negative in $\mathcal{D}\setminus\mathcal{C}.$ The idea for the
proof of the $\mathcal{K}^{+}$ invariance property in (\ref{eq:CapConeInvariance})
in the case that tr$(\gamma)$ intersects $\mathcal{C}$ is that the
$h$-geodesic $\gamma$ can be approximated by a $g$-geodesic that
also passes through $\mathcal{C}.$ From Proposition 2.7 in \cite{BurnsGerberII}
we know that from the time that the corresponding $g$-geodesic enters
$\mathcal{C}$ to the time that it exits $\mathcal{C}$ we have invariance
of the $\mathcal{K}^{+}$ cones under the derivative of the geodesic
flow for $g.$ This invariance property can be destroyed when the
metric $g$ is replaced by the metric $h.$ However, following $\gamma$
for additional time before and after it enters $\mathcal{C},$ while
it is in the negative curvature region $\mathcal{D}\setminus\mathcal{C},$
allows us to recover the $\mathcal{K}^{+}$ invariance in (\ref{eq:CapConeInvariance}).
This is proved in detail in Section 4 of \cite{BurnsGerberII}. (See
Proposition 4.10 of \cite{BurnsGerberII}.) Moreover, the estimates
in Propositions 4.8, 4.9, and 4.10 in \cite{BurnsGerberII} show that
it is actually enough to follow $\gamma$ just for additional time
${\it before}$ it enters $\mathcal{C},$ which leads to the containment
in (\ref{eq:StrongCapConeInvariance}). \end{proof}
\begin{defn}
\label{def:DefAsymptotic} A geodesic $\gamma_{1}:[a_{1},\infty)\to S$
is said to be \emph{asymptotic as} $t\to\infty$ \emph{to} a closed
geodesic $\gamma_{2}:[a_{2},b_{2}]\to S$ if there exists $t_{0}\in\mathbb{R}$
such that, after extending the domain of $\gamma_{2}$ to $(-\infty,\infty),$
we have \begin{equation}
\lim_{t\to\infty}\text{dist}(\gamma_{1}(t),\gamma_{2}(t_{0}+t))=0.\label{eq:AsymptoticEquation}\end{equation}
Similarly, a geodesic $\gamma_{1}:(-\infty,a_{1}]\to S$ is said to
be \emph{asymptotic as} $t\to-\infty$ \emph{to} a closed geodesic
$\gamma_{2}$ if there exists $t_{0}\in\mathbb{R}$ such that (\ref{eq:AsymptoticEquation})
holds with ``$\lim_{t\to\infty}$'' replaced by ``$\lim_{t\to-\infty}$''.
\end{defn}
From the usual procedure for constructing horocycles in regions of
nonpositive curvature (see, e.g.,\cite{HeintzeImHof}), we know that
for each $x\in\mathcal{D}_{i}\setminus\mathcal{C}_{i}$ there are
exactly two vectors $v_{x,j}\in T_{x}^{1}S,$ $j=1,2,$ corresponding
to the two possible orientations on $\partial\mathcal{C}_{i},$ such
that $\gamma_{v_{x,j}}(t)\in\interior(\mathcal{D}_{i}\setminus\mathcal{C}_{i})$
for all $t<0,$ and $\gamma_{v_{x,j}}$ is asymptotic to a closed
geodesic along $\partial\mathcal{C}_{i}$ as $t\to-\infty.$ If $x\in\interior(\mathcal{D}_{i}\setminus\mathcal{C}_{i})$
and $w\in T_{x}^{1}S,$ $w\ne v_{x,j},$ $j=1,2,$ then $\gamma_{w}(t)$
exits $\mathcal{D}_{i}$ in negative time, and one of the following
must occur:
\begin{enumerate}
\item There exists $a<0$ such that $\gamma_{w}(t)\in\interior(\mathcal{D}_{i}\setminus\mathcal{C}_{i})$
for $t\in(a,0]$ and $\gamma_{w}(a)\in\partial\mathcal{D}_{i};$ or
\item There exist $a<c<d<0$ such that $\gamma_{w}(t)\in\interior(\mathcal{D}_{i}\setminus\mathcal{C}_{i})$
for $t\in(a,c)\cup(d,0],$ $\gamma_{w}(t)\in\interior\mathcal{C}_{i}$
for $t\in(c,d),$ and $\gamma_{w}(a)\in\partial\mathcal{D}_{i}.$
\end{enumerate}
If $x\in\mathcal{C}_{i}$ and $w\in T_{x}^{1}S$ are such that $w$
is not tangent to $\partial\mathcal{C}_{i},$ then again $\gamma_{w}(t)$
exits $\mathcal{D}_{i}$ in negative time and we have:
\begin{enumerate}
\item [(3)]There exist $a<c\le0$ such that $\gamma_{w}(t)\in\interior(\mathcal{C}_{i})$
if $c<t<0,$ $\gamma_{w}(t)\in\interior(\mathcal{D}_{i}\setminus\mathcal{C}_{i})$
for $t\in(a,c),$ and $\gamma_{w}(a)\in\partial\mathcal{D}_{i}.$\end{enumerate}
\begin{defn}
\label{def:UnstableConeFields}If $x\in\interior\mathcal{D}$ for
$\mathcal{D}=\mathcal{D}_{i}$, $i\in\{1,\dots,q\},$ and $w\in T_{x}^{1}S$
is such that there exists $a<0$ with $\gamma_{w}(t)\in\interior\mathcal{D}$
for $t\in(a,0]$ and $\gamma_{w}(a)\in\partial\mathcal{D},$ we define
the \emph{unstable cone} $\mathcal{K}_{w}^{u}$ by

\[
\mathcal{K}_{w}^{u}=d\varphi^{-a}\mathcal{K}_{\gamma_{w}(a)}^{+}.\]
For all other $w\in T^{1}S,$ we define

\[
\mathcal{K}_{w}^{u}=\mathcal{K}_{w}^{+}.\]
 Similarly, if $x\in\interior\mathcal{D}$ for $\mathcal{D}=\mathcal{D}_{i}$,
$i\in\{1,\dots,q\},$ and $w\in T_{x}^{1}S$ is such that there exists
$b>0$ with $\gamma_{w}(t)\in\interior\mathcal{D}$ for $t\in[0,b)$
and $\gamma_{w}(b)\in\partial\mathcal{D},$ we define the \emph{stable
cone} $\mathcal{K}_{w}^{s}$ by

\[
\mathcal{K}_{w}^{s}=d\varphi^{-b}\mathcal{K}_{\gamma_{w}(b)}^{-}.\]
For all other $w\in T^{1}S,$ we define

\[
\mathcal{K}_{w}^{s}=\mathcal{K}_{w}^{-}.\]

\end{defn}
With these definitions, it follows from (\ref{eq:NegCurvConeInvariance})
and (\ref{eq:CapConeInvariance}) that the unstable {[}stable{]} cones
are invariant for $d\varphi^{t},$ $t\ge0$ {[}$t\le0${]}. That is,
\begin{equation}
d\varphi^{t}\mathcal{K}_{w}^{u}\subset\mathcal{K}_{\varphi^{t}w}^{u},\text{for\ }t\ge0,\label{eq:UnstableConeInvariance}\end{equation}
 and

\begin{equation}
d\varphi^{t}\mathcal{K}_{w}^{s}\subset\mathcal{K}_{\varphi^{t}w}^{s},\text{for\ }t\le0.\label{eq:StableConeInvariance}\end{equation}
 Moreover, by (\ref{eq:StrictNegConeInv}), if the basepoint of $\varphi^{\overline{t}}w,$
for some $\overline{t}$ between $0$ and $t,$ lies in $\mathcal{N}_{0},$
defined by

\begin{equation}
\mathcal{N}_{0}=S\setminus\left(\cup_{i=1}^{q}\mathcal{D}_{i}\right),\label{eq:N_0Def}\end{equation}
 then we have

\begin{equation}
d\varphi^{t}\mathcal{K}_{w}^{u}\subset\interior\mathcal{K}_{\varphi^{t}w}^{u},\text{\ if \ }t>0\label{eq:StrictUnstableInvariance}\end{equation}
 and \begin{equation}
d\varphi^{t}\mathcal{K}_{w}^{s}\subset\interior\mathcal{K}_{\varphi^{t}w}^{s},\text{\ if\ }t<0.\label{eq:StrictStableInvariance}\end{equation}

\begin{figure}[htbp]\begin{center}
\begin{picture}(0,0)%
\includegraphics[scale=0.5]{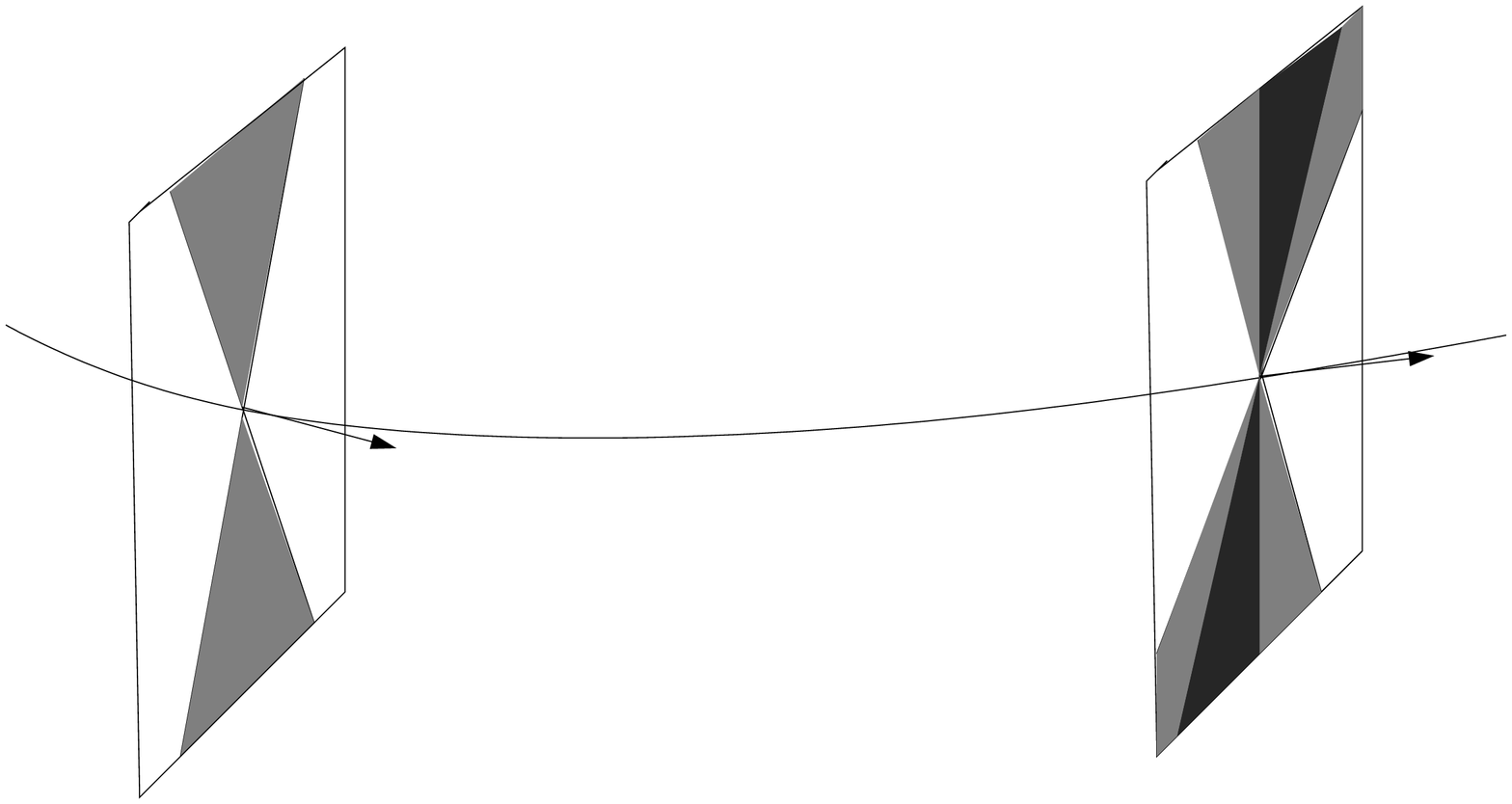}%
\end{picture}%
\setlength{\unitlength}{3947sp}%
\begingroup\makeatletter\ifx\SetFigFont\undefined%
\gdef\SetFigFont#1#2#3#4#5{%
  \reset@font\fontsize{#1}{#2pt}%
  \fontfamily{#3}\fontseries{#4}\fontshape{#5}%
  \selectfont}%
\fi\endgroup%
\begin{picture}(10974,2799)(439,-6073) \put(1900,-4880){\makebox(0,0)[lb]{\smash{{\SetFigFont{12}{14.4}{\familydefault}{\mddefault}{\updefault}{$w$}%
}}}} \put(5700,-4550){\makebox(0,0)[lb]{\smash{{\SetFigFont{12}{14.4}{\familydefault}{\mddefault}{\updefault}{$\varphi^tw$}%
}}}} \put(1300,-5880){\makebox(0,0)[lb]{\smash{{\SetFigFont{12}{14.4}{\familydefault}{\mddefault}{\updefault}{${\mathcal{K}}^u(w)$}%
}}}} \put(5200,-5500){\makebox(0,0)[lb]{\smash{{\SetFigFont{12}{14.4}{\familydefault}{\mddefault}{\updefault}{${\mathcal{K}}^u(d\varphi^tw)$}%
}}}} \put(4870,-5790){\makebox(0,0)[lb]{\smash{{\SetFigFont{12}{14.4}{\familydefault}{\mddefault}{\updefault}{$d\varphi^t({\mathcal{K}}^u(w))$}%
}}}} \put(3000,-4630){\makebox(0,0)[lb]{\smash{{\SetFigFont{12}{14.4}{\familydefault}{\mddefault}{\updefault}{$\gamma_w$}%
}}}} \put(400,-4000){\makebox(0,0)[lb]{\smash{{\SetFigFont{12}{14.4}{\familydefault}{\mddefault}{\updefault}{${\mathcal{P}}(w)$}%
}}}} \put(3920,-3880){\makebox(0,0)[lb]{\smash{{\SetFigFont{12}{14.4}{\familydefault}{\mddefault}{\updefault}{${\mathcal{P}}(\varphi^tw)$}%
}}}} \end{picture}%
\caption{Invariance of ${\mathcal{K}}^u$ cones under $d\varphi^t$}  \label{figure:3} \end{center} \end{figure}The following lemma will be used to prove a transversality condition
that is needed in Lemma \ref{lem:StableConnecting}.
\begin{lem}
\label{lem:DisjointCones}For all $w\in T^{1}S,$ we have

\begin{equation}
\interior\mathcal{K}_{w}^{u}\cap\interior\mathcal{K}_{w}^{s}=\{0\}.\label{eq:DisjointCones}\end{equation}
\end{lem}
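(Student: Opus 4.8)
The plan is to analyze the three possible cases for a vector $w \in T^1S$ according to Definition~\ref{def:UnstableConeFields}, and in each case reduce the claim to the corresponding statement about the $\mathcal{K}^+$ and $\mathcal{K}^-$ cones, for which disjointness of interiors is immediate from the definitions in Definition~\ref{def:K+Cones}. The key observation is that in $H,V$ coordinates, $\interior\mathcal{K}^+_{w}$ is (the union of $\{0\}$ with) the open first-and-third quadrant region $\{\lambda_1\lambda_2 > 0\}$, while $\interior\mathcal{K}^-_{w}$ is $\{\lambda_1\lambda_2 < 0\}$, so $\interior\mathcal{K}^+_{w}\cap\interior\mathcal{K}^-_{w} = \{0\}$ trivially. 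The issue is that $\mathcal{K}^u_w$ and $\mathcal{K}^s_w$ are defined by flowing $\mathcal{K}^+$ and $\mathcal{K}^-$ forward by $d\varphi^{-a}$ (resp. $d\varphi^{-b}$), and $w$ might be of unstable type but not stable type, or vice versa, or of both types (when $\gamma_w$ traverses a cap region in both time directions).

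\emph{Case 1: $w$ is neither of unstable type nor of stable type}, so $\mathcal{K}^u_w = \mathcal{K}^+_w$ and $\mathcal{K}^s_w = \mathcal{K}^-_w$, and the claim is immediate. \emph{Case 2: $w$ is of unstable type but not stable type} (the symmetric case is identical). Then $x = $ basepoint of $w$ lies in $\interior\mathcal{D}$ for some $\mathcal{D} = \mathcal{D}_i$, there is $a < 0$ with $\gamma_w|(a,0]\subset\interior\mathcal{D}$ and $\gamma_w(a)\in\partial\mathcal{D}$, and $\mathcal{K}^u_w = d\varphi^{-a}\mathcal{K}^+_{\gamma'_w(a)}$, while $\mathcal{K}^s_w = \mathcal{K}^-_w$. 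Here I would invoke Lemma~\ref{lem:ImprovedBurnsGerber}, or rather a direct consequence of its proof technique: following the geodesic $\gamma_w$ backward from time $0$ to time $a$, we have $d\varphi^{-a}\mathcal{K}^+_{\gamma'_w(a)}\subset\mathcal{K}^+_w$; indeed $\gamma_w|[a,0]$ is a geodesic that stays in $\interior\mathcal{D}$ except at its endpoint $\gamma_w(a)$, so applying \eqref{eq:CapConeInvariance} (with the roles of the endpoints of the geodesic segment as in the Lemma) gives $\mathcal{K}^u_w = d\varphi^{0-a}\mathcal{K}^+_{\gamma'_w(a)}\subset\mathcal{K}^+_w$. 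Since $\mathcal{K}^u_w\subset\mathcal{K}^+_w$ and hence $\interior\mathcal{K}^u_w\subset\interior\mathcal{K}^+_w$ (one must check that flowing preserves interiors, which follows because $d\varphi^{-a}$ is a linear isomorphism of $\mathcal{P}(\gamma'_w(a))$ onto $\mathcal{P}(w)$ and the inclusion $d\varphi^{-a}\mathcal{K}^+_{\gamma'_w(a)}\subset\mathcal{K}^+_w$ of a linearly-embedded cone, being closed and convex with nonempty interior, has interior mapping into interior), we conclude $\interior\mathcal{K}^u_w\cap\interior\mathcal{K}^s_w\subset\interior\mathcal{K}^+_w\cap\interior\mathcal{K}^-_w = \{0\}$.

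\emph{Case 3: $w$ is of both unstable and stable type.} This happens precisely when $x\in\interior\mathcal{D}_i$, there are $a < 0 < b$ with $\gamma_w|(a,b)\subset\interior\mathcal{D}_i$ and $\gamma_w(a),\gamma_w(b)\in\partial\mathcal{D}_i$, and then $\mathcal{K}^u_w = d\varphi^{-a}\mathcal{K}^+_{\gamma'_w(a)}$, $\mathcal{K}^s_w = d\varphi^{-b}\mathcal{K}^-_{\gamma'_w(b)}$. By the argument in Case 2, $\interior\mathcal{K}^u_w\subset\interior\mathcal{K}^+_w$; by the time-reversed analog of \eqref{eq:CapConeInvariance} applied to $\gamma_w|[0,b]$, $\interior\mathcal{K}^s_w\subset\interior\mathcal{K}^-_w$; hence again the intersection is $\{0\}$. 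I expect Case 3, combined with verifying that the relevant cone-inclusions from Lemma~\ref{lem:ImprovedBurnsGerber} genuinely give \emph{interior-into-interior} containments, to be the only subtle point; everything else is bookkeeping across the cases of Definition~\ref{def:UnstableConeFields}. One clean way to package the interior claim uniformly is to note that each of $\mathcal{K}^u_w, \mathcal{K}^s_w$ is the image under a linear automorphism of $\mathcal{P}(w)$ of a closed half-plane-bounded cone (the preimage of one of the quadrant-pair cones), hence is itself a closed cone bounded by two lines through $0$, with interior equal to the open sector between those lines; the containments $\mathcal{K}^u_w\subset\mathcal{K}^+_w$ and $\mathcal{K}^s_w\subset\mathcal{K}^-_w$ then force the bounding lines of $\mathcal{K}^u_w$ into the closed first/third-quadrant cone and those of $\mathcal{K}^s_w$ into the closed second/fourth-quadrant cone, so their open sectors are disjoint.
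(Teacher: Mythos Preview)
There is a genuine gap in your Case~3. The claim $\mathcal{K}^u_w \subset \mathcal{K}^+_w$ (and likewise $\mathcal{K}^s_w \subset \mathcal{K}^-_w$) need not hold when the basepoint $x$ lies in $\interior\mathcal{C}_i$. Equation~\eqref{eq:CapConeInvariance} in Lemma~\ref{lem:ImprovedBurnsGerber} requires \emph{both} endpoints $\gamma(a)$ and $\gamma(b)$ to lie on $\partial\mathcal{D}$; you are invoking it with one endpoint $\gamma_w(0)=x$ in the interior of $\mathcal{D}$, which the lemma does not cover. In fact the paper explicitly warns, at the start of Section~\ref{sec:Cone Fields}, that for the real analytic metric $h$ one no longer has invariance of the $\mathcal{K}^+$ cones from the time a geodesic enters a cap until it exits, and the proof of Lemma~\ref{lem:ImprovedBurnsGerber} repeats that this invariance ``can be destroyed when the metric $g$ is replaced by the metric $h$.'' So for $x$ inside a cap, $\mathcal{K}^u_w = d\varphi^{-a}\mathcal{K}^+_{\gamma'_w(a)}$ may well rotate out of $\mathcal{K}^+_w$, and your containment fails.

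The repair is not to compare the cones at $w$, but to transport both of them to the exit point $\gamma_w(b)\in\partial\mathcal{D}$. By definition $d\varphi^{b}\mathcal{K}^s_w = \mathcal{K}^-_{\gamma'_w(b)}$, and now~\eqref{eq:CapConeInvariance} \emph{does} apply to the full traversal $[a,b]$, giving
\[
d\varphi^{b}\mathcal{K}^u_w \;=\; d\varphi^{b-a}\mathcal{K}^+_{\gamma'_w(a)} \;\subset\; \mathcal{K}^+_{\gamma'_w(b)}.
\]
Hence $d\varphi^{b}(\interior\mathcal{K}^u_w)\cap d\varphi^{b}(\interior\mathcal{K}^s_w)\subset\interior\mathcal{K}^+_{\gamma'_w(b)}\cap\interior\mathcal{K}^-_{\gamma'_w(b)}=\{0\}$, and since $d\varphi^{b}$ is a linear isomorphism of $\mathcal{P}(w)$ onto $\mathcal{P}(\gamma'_w(b))$, the lemma follows. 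This is precisely how the paper argues. (Your Case~2 happens to reach a correct conclusion, but for a different reason than you cite: in that case the segment $\gamma_w|[a,0]$ lies entirely in the negative-curvature annulus $\mathcal{D}_i\setminus\mathcal{C}_i$, so~\eqref{eq:NegCurvConeInvariance}, not~\eqref{eq:CapConeInvariance}, yields $\mathcal{K}^u_w\subset\mathcal{K}^+_w$. The transport argument above handles all cases uniformly and avoids this subtlety.)
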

\begin{proof}
If $w\in T_{x}^{1}S$ for $x\in S\setminus(\cup_{i=1}^{q}\interior\mathcal{D}_{i}),$
then (\ref{eq:DisjointCones}) is clear, because $\mathcal{K}_{w}^{u}=\mathcal{K}_{w}^{+}$
and $\mathcal{K}_{w}^{s}=\mathcal{K}_{w}^{-}.$ If $x\in\cup_{i=1}^{q}\interior\mathcal{D}_{i},$
(\ref{eq:DisjointCones}) follows from the definition of the stable
and unstable cones and (\ref{eq:CapConeInvariance}) in Lemma \ref{lem:ImprovedBurnsGerber}.
\end{proof}
The unstable cone field is continuous at those $v\in T_{x}^{1}S$
where one of the following holds:
\begin{enumerate}
\item $x\in(\cup_{i=1}^{q}\interior\mathcal{C}_{i})\cup(S\setminus(\cup_{i=1}^{q}\mathcal{D}_{i})).$
\item For some $i\in\{1,\dots,q\},$ $x\in\partial\mathcal{C}_{i}$ and
$v$ is not tangent to $\partial\mathcal{C}_{i}.$
\item For some $i\in\{1,\dots,q\},$ $x\in\interior(\mathcal{D}_{i}\setminus\mathcal{C}_{i})$
and $v\ne v_{x,1},v_{x,2},$ where $v_{x,1},v_{x,2}$ are as defined
above.
\item For some $i\in\{1,\dots,q\},$ $x\in\partial\mathcal{D}_{i}$ and
$v$ points strictly into $\mathcal{D}_{i}$ (that is, $v$ is not
tangent to $\partial\mathcal{D}_{i},$ and $v$ and $\mathcal{D}_{i}$
lie on the same side of the tangent line to $\partial\mathcal{D}_{i}$
at $x$).
\end{enumerate}
Analogous conditions can be given that guarantee that the stable cone
field is continuous at certain vectors $v\in T_{x}^{1}S.$
\begin{defn}
If $v\mapsto\mathcal{K}_{v}\subset\mathcal{P}(v)$ is a cone field
defined for $v$ in a neighborhood of $v_{0}$ in $T^{1}S$ such that
each $\mathcal{K}_{v}$ is closed, then we say that this cone field
is \emph{upper semi-continuous} at $v_{0}$ if the following holds:
for any sequence of vectors $(v_{n})_{n=1,2,\dots}$ in this neighborhood
of $v_{0}$ and a corresponding sequence $(\xi_{n})_{n=1,2,\dots}$
with $\xi_{n}\in\mathcal{K}_{v_{n}}$ such that $\lim_{n\to\infty}v_{n}=v_{0}$
and $\lim_{n\to\infty}\xi_{n}=\xi_{0},$ we must have $\xi_{0}\in\mathcal{K}_{v_{0}}.$ \end{defn}
\begin{lem}
\label{lem:UpperSemiContinuity}The unstable and stable cone fields
$v\mapsto\mathcal{K}_{v}^{u}$ and $v\mapsto\mathcal{K}_{v}^{s}$
given in Definition \ref{def:UnstableConeFields} are upper semi-continuous
on $(T^{1}S)\setminus(\cup_{i=1}^{q}T^{1}(\partial\mathcal{C}_{i})).$ \end{lem}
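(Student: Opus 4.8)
The plan is to reduce the lemma to the continuity statement already recorded above: $v\mapsto\mathcal{K}^u_v$ is continuous at every $v\in T^1_xS$ satisfying one of the conditions (1)--(4), and there is an analogous list for $v\mapsto\mathcal{K}^s_v$. Since a cone field that is continuous at $v$ is in particular upper semi-continuous there, it suffices to treat the vectors in $(T^1S)\setminus\bigcup_iT^1(\partial\mathcal{C}_i)$ satisfying none of (1)--(4). Condition (2) is vacuous once basepoints on $\partial\mathcal{C}_i$ are excluded, so the leftover vectors $v\in T^1_xS$ are precisely: (a) $x\in\interior(\mathcal{D}_i\setminus\mathcal{C}_i)$ with $v\in\{v_{x,1},v_{x,2}\}$; and (b) $x\in\partial\mathcal{D}_i$ with $v$ not pointing strictly into $\mathcal{D}_i$. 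In either case the basepoint lies in the open set $S\setminus\bigcup_i\mathcal{C}_i$, and the first clause of Definition \ref{def:UnstableConeFields} does not apply --- in (a) because $\gamma_v$ never meets $\partial\mathcal{D}_i$ in negative time, in (b) because $x\notin\interior\mathcal{D}_i$ --- so $\mathcal{K}^u_v=\mathcal{K}^+_v$, the largest cone in $\mathcal{P}(v)$.

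The crux is the inclusion $\mathcal{K}^u_w\subset\mathcal{K}^+_w$ for every $w\in T^1S$ with $\gamma_w(0)\notin\bigcup_i\mathcal{C}_i$. This is trivial when $\mathcal{K}^u_w=\mathcal{K}^+_w$, so suppose instead that $\mathcal{K}^u_w=d\varphi^{-a}\mathcal{K}^+_{\gamma_w'(a)}$ with $a<0$, $\gamma_w(a)\in\partial\mathcal{D}_i$, and $\gamma_w((a,0])\subset\interior\mathcal{D}_i$. Since, once a geodesic leaves $\mathcal{C}_i$, it leaves $\mathcal{D}_i$ without re-entering $\mathcal{C}_i$ (recalled above, from \cite{BurnsGerberII}), the arc of $\gamma_w$ from time $a$ until its forward exit from $\mathcal{D}_i$ enters $\interior\mathcal{C}_i$ at most once; and since $\gamma_w(0)\notin\mathcal{C}_i$, the time $0$ lies either before that entry or after the corresponding exit. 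In the first alternative $\gamma_w|[a,0]$ stays in $\mathcal{D}_i\setminus\mathcal{C}_i$, where the curvature is nonpositive, so (\ref{eq:NegCurvConeInvariance}) yields $d\varphi^{-a}\mathcal{K}^+_{\gamma_w'(a)}\subset\mathcal{K}^+_{\gamma_w'(0)}$. In the second, letting $\widehat{b}\in(a,0)$ denote the exit time of $\gamma_w$ from $\mathcal{C}_i$, one applies (\ref{eq:StrongCapConeInvariance}) of Lemma \ref{lem:ImprovedBurnsGerber} on $[a,\widehat{b}]$, then (\ref{eq:NegCurvConeInvariance}) on $[\widehat{b},0]$, and composes. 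This is precisely where the strengthening (\ref{eq:StrongCapConeInvariance}) of (\ref{eq:CapConeInvariance}) is used: we need the cone to have returned to $\mathcal{K}^+$ already at the moment $\gamma_w$ leaves $\mathcal{C}_i$, namely the negative time $\widehat{b}$, not merely at the later moment it leaves $\mathcal{D}_i$.

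With these two facts the conclusion is immediate. At a $v$ satisfying (1), (3), or (4), upper semi-continuity is just the asserted continuity of $v\mapsto\mathcal{K}^u_v$. At a leftover $v$ of type (a) or (b), take $v_n\to v$, $\xi_n\in\mathcal{K}^u_{v_n}$ with $\xi_n\to\xi_0$; since $\gamma_v(0)=x$ lies in the open set $S\setminus\bigcup_i\mathcal{C}_i$, we have $\gamma_{v_n}(0)\notin\bigcup_i\mathcal{C}_i$ for all large $n$, so the inclusion above gives $\xi_n\in\mathcal{K}^+_{v_n}$; and since the cone field $w\mapsto\mathcal{K}^+_w$ is continuous (being cut out of the continuously varying plane field $\mathcal{P}$ by the fixed condition $\langle\xi_H,\xi_V\rangle\ge0$), the limit satisfies $\xi_0\in\mathcal{K}^+_v=\mathcal{K}^u_v$. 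This proves upper semi-continuity of $v\mapsto\mathcal{K}^u_v$ on $(T^1S)\setminus\bigcup_iT^1(\partial\mathcal{C}_i)$; the statement for $v\mapsto\mathcal{K}^s_v$ follows by the same argument applied to the time-reversed geodesic flow, under which $\mathcal{K}^+\leftrightarrow\mathcal{K}^-$ and $\mathcal{K}^u\leftrightarrow\mathcal{K}^s$. I expect the only substantive step to be the inclusion $\mathcal{K}^u_w\subset\mathcal{K}^+_w$ of the second paragraph --- correctly tracking a $\mathcal{K}^+$ cone through a single entry into and exit from a cap --- with everything else amounting to a bookkeeping of cases together with the trivial continuity of $\mathcal{K}^\pm$.
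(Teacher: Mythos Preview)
Your proof is correct and follows essentially the same strategy as the paper: identify the two leftover cases and show that at such $v$ one has $\mathcal{K}^u_v=\mathcal{K}^+_v$ while nearby cones $\mathcal{K}^u_{v_n}$ are contained in $\mathcal{K}^+_{v_n}$, then use continuity of $\mathcal{K}^+$. The paper labels your cases (b) and (a) as (i) and (ii) and treats them separately; in its Case~(ii) it uses exactly your combination of (\ref{eq:StrongCapConeInvariance}) followed by (\ref{eq:NegCurvConeInvariance}), while in its Case~(i) it instead pushes forward to the small positive exit time $b$ from $\mathcal{D}_i$ and invokes the weaker (\ref{eq:CapConeInvariance}), arguing that $\mathcal{K}^u_v$ is \emph{close to} $d\varphi^{b-a}\mathcal{K}^+_{\gamma'(a)}\subset\mathcal{K}^+_{\gamma'(b)}$. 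Your single inclusion $\mathcal{K}^u_w\subset\mathcal{K}^+_w$ for basepoints outside $\bigcup_i\mathcal{C}_i$ unifies the two cases and replaces the paper's ``close to'' step by an exact containment, which is a mild streamlining; the only cost is that you must invoke (\ref{eq:StrongCapConeInvariance}) in both cases rather than just one. One phrasing nit: ``Condition~(2) is vacuous once basepoints on $\partial\mathcal{C}_i$ are excluded'' is not quite right---the domain excludes only vectors \emph{tangent} to $\partial\mathcal{C}_i$, and condition~(2) then covers the remaining vectors based there, so no leftover case arises with basepoint on $\partial\mathcal{C}_i$; your enumeration of leftovers as (a) and (b) is nonetheless correct.
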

\begin{proof}
We will prove this for the unstable cone field. The proof for the
stable cone field is similar. Let $v_{0}\in T_{x}^{1}S$ be such that
$v_{0}$ is not tangent to any of $\partial\mathcal{C}_{i},$ $i=1,\dots,q.$
The following two cases are not covered by the above cases (1)-(4),
at which we have continuity:
\begin{enumerate}
\item [(i)]For some $i\in\{1,\dots,q\},$ $x\in\partial\mathcal{D}_{i}$
and $v_{0}$ is either tangent to $\partial\mathcal{D}_{i}$ or points
strictly out of $\mathcal{D}_{i}.$
\item [(ii)]For some $i\in\{1,\dots,q\},$ $x\in\interior(\mathcal{D}_{i}\setminus\mathcal{C}_{i})$
and $v_{0}\in\{v_{x,1},v_{x,2}\}.$
\end{enumerate}
${\it \ \ \ Case\ (i).}$ In this case, $\mathcal{K}_{v_{0}}^{u}=\mathcal{K}_{v_{0}}^{+}.$
For $v$ close to $v_{0}$ with basepoint in $S\setminus\interior\mathcal{D}_{i},$
or with basepoint $y\in\interior\mathcal{D}_{i}$ and $v\in\{v_{y,1},v_{y,2}\},$
we have $\mathcal{K}_{v}^{u}=\mathcal{K}_{v}^{+}.$ Thus it suffices
to consider $v$ close to $v_{0}$ with basepoint in $\interior\mathcal{D}_{i}$
and $\gamma_{v}$ ${\it not}$ asymptotic to $\partial\mathcal{C}_{i}$
as $t\to-\infty.$ Let $a=a(v),$ $b=b(v),$ $a<0,$ be such that
$\gamma_{v}((a,b))\subset\interior\mathcal{D}_{i}$ and $\gamma_{v}(a),$
$\gamma_{v}(b)\in\partial\mathcal{D}_{i}.$ If $v$ is close to $v_{0},$
then $b$ is close to $0.$ Thus $\mathcal{K}_{v}^{u}=d\varphi_{\gamma'(a)}^{-a}\mathcal{K}_{\gamma'(a)}^{+}$
is close to $\mathcal{K}_{\gamma'(b)}^{u}=d\varphi_{\gamma'(a)}^{b-a}\mathcal{K}_{\gamma'(a)}^{+},$
which is contained in $\mathcal{K}_{\gamma'(b)}^{+},$ by (\ref{eq:CapConeInvariance})
in Lemma \ref{lem:ImprovedBurnsGerber}. This establishes upper semi-continuity
at $v_{0}.$

${\it Case\ (ii).}$ Again we have $\mathcal{K}_{v_{0}}^{u}=\mathcal{K}_{v_{0}}^{+}.$
Moreover, $\gamma_{v_{0}}$ is asymptotic to a closed geodesic along
$\partial\mathcal{C}_{i}$ as $t\to-\infty.$ For $v$ close to $v_{0}$
such that $\gamma_{v}$ is also asymptotic to $\partial\mathcal{C}_{i}$
as $t\to-\infty,$ we have $\mathcal{K}_{v}^{u}=\mathcal{K}_{v}^{+}.$
There are two other possibilities: If $v$ is close to $v_{0}$ and
$\gamma_{v}$ exits $\mathcal{D}_{i}$ in negative time without first
entering $\mathcal{C}_{i},$ then $\mathcal{K}_{v}^{u}\subset\mathcal{K}_{v}^{+}$
by (\ref{eq:NegCurvConeInvariance}). If $v$ is close to $v_{0}$
and $\gamma_{v}$ passes through $\mathcal{C}_{i}$ in negative time
before exiting $\mathcal{D}_{i},$ let $a<\widehat{a}<\widehat{b}<0$
be such that $\gamma_{v}([a,0))\subset\mathcal{D}_{i},$ $\gamma_{v}([\widehat{a},\widehat{b}])\subset\mathcal{C}_{i},$
$\gamma_{v}(a)\in\partial\mathcal{D}_{i},$ and $\gamma_{v}(\widehat{a}),$
$\gamma_{v}(\widehat{b})\in\partial\mathcal{C}_{i}.$ Then, by (\ref{eq:StrongCapConeInvariance})
in Lemma \ref{lem:ImprovedBurnsGerber}, we have $\mathcal{K}_{\gamma'(\widehat{b})}^{u}=d\varphi^{\widehat{b}-a}\mathcal{K}_{\gamma'(a)}^{+}\subset\mathcal{K}_{\gamma'(\widehat{b})}^{+}.$
Thus $\mathcal{K}_{v}^{u}=d\varphi^{-\widehat{b}}\mathcal{K}_{\gamma'(\widehat{b})}^{u}\subset d\varphi^{-\widehat{b}}\mathcal{K}_{\gamma'(\widehat{b})}^{+}\subset\mathcal{K}_{v}^{+},$
where the last inclusion is by (\ref{eq:NegCurvConeInvariance}).
Therefore we have upper semi-continuity at $v_{0}.$
\end{proof}
Definition \ref{def:Z} and Remark \ref{rem:ConeAngleCompactSet}
below will be used to obtain upper bounds on the Lyapunov function
defined in Section \ref{sec:LyapunovLineFields}, while Definitions
\ref{def:RadialVector} and \ref{def:ApproxRadial} and Lemma \ref{lem:H=00003D0}
will be needed in Propositions \ref{pro:Asymptotic} and \ref{pro:BangertGutkinProperty}.
\begin{defn}
\label{def:Z}For $i\in\{1,\dots,q\},$ let $\mathcal{Z}_{i}=(T^{1}(\partial\mathcal{C}_{i}))\cup\{v\in T_{x}^{1}S:x\in\mathcal{D}_{i}\text{\ and\ }v\in\{v_{x,1},v_{x,2}\}\}.$\end{defn}
\begin{rem}
\label{rem:ConeAngleCompactSet}The $\mathcal{K}_{w}^{u}$ cone angle
can approach $0$ as $w$ approaches $\cup_{i=1}^{q}\mathcal{Z}_{i},$
but if $v\in(T^{1}S)\setminus(\cup_{i=1}^{q}\mathcal{Z}_{i}),$ then
there exists an open neighborhood $\mathcal{U}$ of $v$ and an $\alpha=\alpha(\mathcal{U})>0$
such that the cone angle of $\mathcal{K}_{w}^{u}$ is at least $\alpha$
for all $w\in\mathcal{U}.$ Thus, if $\mathcal{W}$ is a compact subset
of $(T^{1}S)\setminus(\cup_{i=1}^{q}\mathcal{Z}_{i}),$ then there
is a positive lower bound for the cone angles of $\mathcal{K}_{w}^{u}$
for $w\in\mathcal{W}.$
\begin{defn}
\label{def:RadialVector}Let $x\in\mathcal{D}=\mathcal{D}_{i}\subset\interior\mathcal{E}=\interior\mathcal{E}_{i},$
for some $i\in\{1,\dots,q\}.$ If $x$ is not the center of $\mathcal{D}$
in the radially symmetric $g$-metric, let $\gamma$ be the $g$-geodesic
from $x$ to a point on $\partial\mathcal{E}$ that is of length dist$_{g}(x,\partial\mathcal{E}).$
Then the unit vector $v_{0}$ in the $h$-metric that is a positive
multiple of $\gamma'(0)$ is called a \emph{radial vector} at $x.$
If $x$ is the center of $\mathcal{D},$ then any $v_{0}\in T_{x}^{1}S$
is called a radial vector at $x.$ (As usual, if we do not specify
the metric, $T^{1}S$ refers to unit vectors for $h.)$
\end{defn}
\end{rem}
\begin{lem}
\label{lem:H=00003D0} There exist positive numbers $\epsilon$ and
$R$ such that if $x\in\cup_{i=1}^{q}\mathcal{D}_{i}$ and $v_{0}$
is a radial vector at $x,$ then for any $v\in T_{x}^{1,h}S$ with
dist$_{h}(v,v_{0})<\epsilon,$ we have $v\notin\cup_{i=1}^{q}\mathcal{Z}_{i}$
and $d\varphi_{v}^{t}(H=0)\subset\interior\mathcal{K}_{\varphi^{t}(v)}^{u},$
for $t\ge R.$ Here $H=0$ means the line in $\mathcal{P}(v)$ with
$H$ coordinate identically $0.$ \end{lem}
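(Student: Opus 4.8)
The plan is to exploit the strong convexity of $\mathcal{D}_i$ together with the forward invariance and strict invariance properties of the unstable cone field, equations (\ref{eq:UnstableConeInvariance}), (\ref{eq:StrictNegConeInv}), and (\ref{eq:CapConeInvariance})--(\ref{eq:StrongCapConeInvariance}). First I would observe that if $v_0$ is a radial vector at $x\in\mathcal{D}_i$, then the $g$-geodesic $\gamma$ realizing $\mathrm{dist}_g(x,\partial\mathcal{E}_i)$ meets $\partial\mathcal{D}_i$ transversally and exits $\mathcal{D}_i$ (and indeed $\mathcal{E}_i$) in bounded positive time without being asymptotic to $\partial\mathcal{C}_i$; by the strong convexity of $\mathcal{D}_i$ and continuity, the same holds for the $h$-geodesic $\gamma_v$ for all $v$ with $\mathrm{dist}_h(v,v_0)<\epsilon$, once $\epsilon$ is small enough. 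In particular such a $v$ is not asymptotic to $\partial\mathcal{C}_i$ as $t\to -\infty$ either — after exiting $\mathcal{D}_i$ in positive time it cannot re-enter and be backward-asymptotic to $\partial\mathcal{C}_i$ — and $v$ is not tangent to $\partial\mathcal{C}_i$, so $v\notin\cup_{i=1}^q\mathcal{Z}_i$. This also makes $\mathcal{K}_v^u$ well-defined and of positive cone angle by Remark \ref{rem:ConeAngleCompactSet}.

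Next I would handle the containment $d\varphi_v^t(H=0)\subset\mathcal{K}_{\varphi^t v}^u$ for all $t\ge 0$. The line $H=0$ corresponds to a perpendicular Jacobi field $J$ with $j(0)=0$, $j'(0)\ne 0$, so $j(t)j'(t)>0$ for small $t>0$; hence $d\varphi_v^t(H=0)\subset\mathcal{K}_{\varphi^t v}^+$ for small $t>0$. Using the definition of the unstable cone and the cone-invariance Lemma \ref{lem:ImprovedBurnsGerber} — tracking $\gamma_v$ while it is in $\mathcal{D}_i$ and then applying (\ref{eq:NegCurvConeInvariance}), (\ref{eq:CapConeInvariance}) in the negative-curvature region and across any cap it later meets — one gets $d\varphi_v^t(H=0)\subset\mathcal{K}_{\varphi^t v}^u$ for all $t\ge 0$ once $t$ is past the small time it takes $j$ to become nonzero. (For $v$ whose geodesic starts inside $\mathcal{C}_i$ one argues similarly using (\ref{eq:StrongCapConeInvariance}).)

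The real content is the strictness: promoting the inclusion into $\interior\mathcal{K}^u$ after a uniform time $R$. Here I would use (\ref{eq:StrictNegConeInv}) and (\ref{eq:StrictUnstableInvariance}): once $\gamma_v$ passes through a point whose basepoint lies in the negative-curvature region $\mathcal{N}_0$ (equivalently, outside all the $\mathcal{D}_i$), the inclusion becomes strict. Because $\gamma_v$ with $\mathrm{dist}_h(v,v_0)<\epsilon$ exits $\mathcal{D}_i$ in a uniformly bounded time and thereafter spends a definite amount of time outside all the $\mathcal{D}_j$ before possibly re-entering one — this uses the strong convexity constant $\beta_i$ and compactness/continuity to get uniformity over $x$ and over $i$ — there is a uniform $R>0$ such that $\varphi^{\bar t}(v)$ has basepoint in $\mathcal{N}_0$ for some $\bar t\in(0,R)$. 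Combining this with the already-established inclusion $d\varphi_v^{\bar t}(H=0)\subset\mathcal{K}_{\varphi^{\bar t}v}^u$ and applying (\ref{eq:StrictUnstableInvariance}) from time $\bar t$ to time $t\ge R$ yields $d\varphi_v^t(H=0)\subset\interior\mathcal{K}_{\varphi^t v}^u$ for all $t\ge R$. The main obstacle I anticipate is making $\epsilon$ and $R$ genuinely uniform in $x$ and $i$: this requires a compactness argument showing that the radial vectors $v_0$, as $x$ ranges over $\cup_i\mathcal{D}_i$, form a set whose geodesics exit $\cup_i\mathcal{D}_i$ in uniformly bounded time and stay out for a uniformly bounded time, so that a single $\epsilon$ controls the perturbation and a single $R$ works for all basepoints.
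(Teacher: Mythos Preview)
Your plan has a genuine gap in the case $x\in\mathcal{C}_i$. You note that the Jacobi field with $j(0)=0$, $j'(0)\ne 0$ satisfies $jj'>0$ for small $t>0$, and then appeal to cone invariance to propagate this forward. But the invariance statements (\ref{eq:NegCurvConeInvariance}), (\ref{eq:CapConeInvariance}), (\ref{eq:StrongCapConeInvariance}) all concern $\mathcal{K}^+$-cones for geodesics that \emph{enter} $\mathcal{D}_i$ from $\partial\mathcal{D}_i$; none of them says anything about a geodesic starting inside $\mathcal{C}_i$ with initial data on the line $H=0$. Inside the cap the curvature is positive, so $\mathcal{K}^+$ is not forward-invariant: $j''=-Kj<0$ can drive $j'$ negative while $j>0$, making $jj'<0$ before $\gamma_v$ reaches $\partial\mathcal{C}_i$. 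Your parenthetical ``one argues similarly using (\ref{eq:StrongCapConeInvariance})'' does not work, because that inclusion presupposes the trajectory entered from $\partial\mathcal{D}_i$ with a vector already in $\mathcal{K}^+$. Moreover, even once you exit $\mathcal{D}_i$, landing in $\mathcal{K}^+$ is not automatic, and $\mathcal{K}^u$ inside $\mathcal{D}_i$ is typically a strict sub-cone of $\mathcal{K}^+$, so you cannot simply identify them.

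The paper's proof does not use abstract cone invariance for this step at all. Instead it compares $\gamma_v$ to the \emph{radial $g$-geodesic} $\gamma_g$ through $x$ and works with the transformed Riccati equation (\ref{eq:TransformedRiccati}). For $x\in\mathcal{C}_i$, a lemma specific to monotone-curvature caps (Lemma~2.5 of \cite{BurnsGerberII}) gives $0\le\theta_g(t_0)\le\pi/2$ at the exit time $t_0$ from $\mathcal{C}_i$; negative curvature in $\mathcal{E}_i\setminus\mathcal{C}_i$ then forces $\theta_g(t_1)\in(0,\pi/2)$ at the exit time $t_1$ from $\mathcal{E}_i$, with a uniform $\delta$-margin by compactness over $x\in\mathcal{C}_i$. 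Only then does a perturbation argument (with $h$ $C^2$-close to $g$ and $v$ $\epsilon$-close to $v_0$) transfer this to $\theta_h(t_1)\in(0,\pi/2)$, i.e., $d\varphi_v^{t_1}(H=0)\subset\interior\mathcal{K}^+_{\varphi^{t_1}v}=\interior\mathcal{K}^u_{\varphi^{t_1}v}$, since $\gamma_h(t_1)$ lies outside all $\mathcal{D}_j$. The uniform $R$ is simply $\max_i\operatorname{radius}_g(\mathcal{E}_i)$. The essential point you are missing is that the conclusion genuinely relies on the radial symmetry and monotone-curvature structure of the $g$-caps, not merely on the $h$-cone machinery.
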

\begin{proof}
Let $R_{i}$ be the radius of $\mathcal{E}_{i}$ in the $g$-metric,
and let $R=\max(R_{1},\dots,R_{q}).$ Suppose $x\in\mathcal{D}_{i}$
and let $\mathcal{C}=\mathcal{C}_{i},$ $\mathcal{D}=\mathcal{D}_{i},$
and $\mathcal{E}=\mathcal{E}_{i}.$ Let $\gamma_{g}$ be the (unit
speed) $g$-geodesic with $\gamma_{g}(0)=x$ and $\gamma_{g}'(0)$
a positive multiple of the radial vector $v_{0}.$ Let $\gamma_{h}$
be the $h$-geodesic with $\gamma_{h}(0)=x$ and $\gamma_{h}'(0)=v$
for some $v\in T_{x}^{1}S$ with dist$_{h}(v,v_{0})<\epsilon,$ where
we describe the choice of $\epsilon$ later in the argument. We consider
the solutions $\theta_{g}$ {[}respectively, $\theta_{h}]$ to the
transformed Riccati equation (\ref{eq:TransformedRiccati}) along
$\gamma_{g}$ $[\gamma_{h}]$ with $\theta=\theta_{g}$ $[\theta_{h}]$
and $K=K_{g}$ $[K_{h}],$ the curvature with respect to the $g$
$[h]$ metric. Assume $\theta_{g}$ and $\theta_{h}$ satisfy the
initial condition $\theta_{g}(0)=\pi/2=\theta_{h}(0).$ The condition
$\theta_{h}(0)=\pi/2$ corresponds to the line $H=0$ in $\mathcal{P}(v).$

First we consider the case $x\in\mathcal{C}.$ Then there exist times
$t_{0}=t_{0}(x),$ $t_{1}=t_{1}(x),$ $t_{2}=t_{2}(x),$ $-2R<t_{2}<0\le t_{0}<t_{1}\le R,$
such that for $t\ge0,$ $\gamma_{g}(t)$ exits $\mathcal{C}$ at time
$t_{0}$ and $\gamma_{g}$ exits $\mathcal{E}$ at time $t_{1},$
and for $t\le0,$ $\gamma_{g}(t)$ exits $\mathcal{E}$ at time $t_{2.}$
It follows from Lemma 2.5 in \cite{BurnsGerberII} that $0\le\theta_{g}(t_{0})\le\pi/2.$
Since $K_{g}(t)$ is negative for $t_{0}<t\le t_{1},$ we obtain $0<\theta_{g}(t_{1})<\pi/2$.
Moreover, by a compactness argument, there is a $\delta\in(0,\pi/2)$
such that $\delta<\theta_{g}(t_{1})<(\pi/2)-\delta,$ for $t_{1}=t_{1}(x),$
for all $x\in\mathcal{C}_{i}.$ For $\epsilon$ sufficiently small
and $\mathbb{H}_{2}'$ sufficiently small (i.e., $h$ sufficiently
$C^{2}$ close to $g),$ we obtain $0<\theta_{h}(t_{1})<\pi/2.$ In
addition, we may assume that $\gamma_{h}(t_{1})$ is sufficiently
close to $\gamma_{g}(t_{1})$ that $\gamma_{h}(t_{1})\in S\setminus(\cup_{j=1}^{q}\mathcal{D}_{j}),$
which implies that $\mathcal{K}_{\gamma_{h}'(t_{1})}^{u}=\mathcal{K}_{\gamma_{h}'(t_{1})}^{+}.$
Thus $0<\theta_{h}(t_{1})<\pi/2$ implies that $d\varphi_{v}^{t_{1}}(H=0)\subset\interior\mathcal{K}_{\varphi^{t_{1}}(v)}^{u}.$
By the invariance of the unstable cones it follows that $d\varphi_{v}^{t}(H=0)\subset\interior\mathcal{K}_{\varphi^{t}(v)}^{u},$
for $t\ge R.$ We may also assume that $\theta_{h}(t_{2})\in S\setminus(\cup_{j=1}^{q}\mathcal{D}_{j}),$
which implies that $v\notin\cup_{i=1}^{q}\mathcal{Z}_{i}.$

Now consider the case $x\in\mathcal{D}\setminus\mathcal{C}.$ Again
let $t_{1}=t_{1}(x),$ $0<t_{1}<R,$ be the time at which $\gamma_{g}$
exits $\mathcal{E}.$ Since $K_{g}(t)$ is negative for $0\le t\le t_{1},$
we have $0<\theta_{g}(t_{1})<\pi/2,$ and the rest of the argument
proceeds as in the case $x\in\mathcal{C}.$ \end{proof}
\begin{defn}
\label{def:ApproxRadial} If $x\in\cup_{i=1}^{q}\mathcal{D}_{i}$
, and $v\in T_{x}^{1}(S)$ is such that dist$_{h}(v,v_{0})<\epsilon,$
where $v_{0}$ is a radial vector at $x$ and $\epsilon$ is as in
Lemma \ref{lem:H=00003D0}, then $v$ is said to be an \emph{approximately
radial vector} at $x.$ A $C^{1}$ regular curve $\sigma(t),$ $a_{1}\le t\le a_{2},$
$a_{1}<a_{2},$ in $(T^{1}S)\setminus\big(\cup_{i=1}^{q}T^{1}(\partial\mathcal{C}_{i})\big),$
is defined to be an\emph{ approximately stable {[}approximately unstable{]}
curve} if $\sigma'(t)$ is in $\mathcal{K}_{\sigma(t)}^{s}[\mathcal{K}_{\sigma(t)}^{u}],$
for all $t\in[a_{1},a_{2}].$ \end{defn}
\begin{rem}
\label{rem:ConstantBasepointCurve} It follows from Lemma \ref{lem:H=00003D0}
that if $x\in\cup_{i=1}^{q}\mathcal{D}_{i}$ and $\sigma(s),$ $a_{1}\le s\le a_{2},$
$a_{1}<a_{2},$ is a $C^{1}$ regular constant basepoint curve in
$T_{x}^{1}(S)$ such that each $\sigma(s)$ is an approximately radial
vector, then $\varphi^{t}(\sigma(s)),$ $a_{1}\le s\le a_{2},$ is
an approximately unstable curve for $t\ge R$. To see this, note that
$\sigma'(s)=(\xi_{H}(s),\xi_{V}(s)),$ where $\xi_{H}(s)\equiv0.$
\end{rem}

\section{Lyapunov Function and Line Fields\label{sec:LyapunovLineFields}}

We now introduce a \emph{Lyapunov function} $Q=Q_{w}:\mathcal{P}(w)\to\mathbb{R}$
for each $w\in T^{1}S.$ As we will see in (\ref{eq:LyapunovIncreasing})
below, $Q$ is monotone increasing along the orbits of the geodesic
flow. This Lyapunov function will be used to prove that stable and
unstable cones intersect down to lines in Lemma \ref{lem:LineField}.
Similarly constructed line fields in \cite{BurnsGerberI,BurnsGerberII}
were only shown to exist on some set of full measure, while Lemma
\ref{lem:LineField} shows existence everywhere. Moreover, Lemma \ref{lem:ContinuousLineFields}
shows continuity of these line fields at all vectors in $T^{1}S$
except those that are tangent to the boundary of one of the caps.
Our use of Lyapunov functions and the methods in this section are
based on ideas in \cite{KatokBurns,BarreiraPesin}.

For each $w\in T^{1}S,$ let $U_{i}=U_{i}(w),$ $i=1,2,$ be a basis
for $\mathcal{P}(w)$ such that the unstable cone at $w$ is given
by $\mathcal{K}^{u}(w)=\{\lambda_{1}U_{1}+\lambda_{2}U_{2}:\lambda_{1}\lambda_{2}\ge0\}.$
We require $U_{1},U_{2}$ to be positively oriented (using the orientation
on $\mathcal{P}(w)$ given near the beginning of Section 3), $||U_{1}||=||U_{2}||,$
and the parallelogram determined by $U_{1}$ and $U_{2}$ to have
unit area. This determines $U_{1},U_{2}$ uniquely up to a simultaneous
change of sign in both $U_{1}$ and $U_{2}.$ If $\mathcal{K}_{w}^{u}=\mathcal{K}_{w}^{+},$
we let $U_{1}$ and $U_{2}$ have $H,V$ coordinates $(1,0)$ and
$(0,1),$ respectively. For $w\in T^{1}S$ and $t\in\mathbb{R},$
let

\[
\mathcal{A}=\mathcal{A}(w,t)=\left(\begin{array}{cc}
a(w,t) & b(w,t)\\
c(w,t) & d(w,t)\end{array}\right)\]
be the matrix for $d\varphi^{t}:\mathcal{P}(w)\to\mathcal{P}(\varphi^{t}w)$
with respect to the bases $U_{1}(w),U_{2}(w)$ and $U_{1}(\varphi^{t}(w)),U_{2}(\varphi^{t}(w)).$
Since $d\varphi_{w}^{t}:\mathcal{P}(w)\to\mathcal{P}(\varphi^{t}w)$
is area-preserving and orientation-preserving, $\det\mathcal{A}(w,t)=1.$
The inclusion (\ref{eq:UnstableConeInvariance}) implies that if $t\ge0,$
then either all the entries of $\mathcal{A}(w,t)$ are nonnegative
or all the entries are nonpositive. (Which case occurs may depend
on $t.)$ Moreover if $t>0$ and (\ref{eq:StrictUnstableInvariance})
holds, then all the entries of $\mathcal{A}(w,t)$ are strictly positive
or all the entries are strictly negative. In our calculations, it
does not matter if $\mathcal{A}(w,t)$ is replaced by $-\mathcal{A}(w,t)$.
Therefore, we may assume that for $t\ge0,$ all of the entries of
$\mathcal{A}(w,t)$ are nonnegative. We define a \emph{Lyapunov function}
$Q=Q_{w}:\mathcal{P}(w)\to\mathbb{R}$ by $Q(\lambda_{1}U_{1}(w)+\lambda_{2}U_{2}(w))=\text{sgn}(\lambda_{1}\lambda_{2})\sqrt{|\lambda_{1}\lambda_{2}|}$
. Then $Q_{w}(\xi)\ge0$ if and only if $\xi\in\mathcal{K}_{w}^{u}.$
Thus, by Lemma \ref{lem:DisjointCones}, $Q_{w}(\xi)\le0$ if $\xi\in\mathcal{K}_{w}^{s}.$
We define $F=F_{w}:\mathcal{P}(w)\to\mathbb{R}$ by $F(\lambda_{1}U_{1}(w)+\lambda_{2}U_{2}(w))=\lambda_{1}\lambda_{2}.$

Let $\xi=\lambda_{1}U_{1}(w)+\lambda_{2}U_{2}(w)$ and $a=a(w,t),$
$b=b(w,t),$ $c=c(w,t),$ $d=d(w,t).$ Suppose $t\ge0.$ Assume for
the moment that $\lambda_{1}\lambda_{2}\le0.$ Then\begin{eqnarray}
F(d\varphi_{w}^{t}\xi) & = & (a\lambda_{1}+b\lambda_{2})(c\lambda_{1}+d\lambda_{2})\nonumber \\
 & = & ac\lambda_{1}^{2}+bd\lambda_{2}^{2}+(ad+bc)\lambda_{1}\lambda_{2}\nonumber \\
 & = & ac\lambda_{1}^{2}+bd\lambda_{2}^{2}+2bc\lambda_{1}\lambda_{2}+\lambda_{1}\lambda_{2}\label{eq:FirstEstimate}\\
 & = & ac\lambda_{1}^{2}+bd\lambda_{2}^{2}+2\sqrt{(ad-1)bc}\lambda_{1}\lambda_{2}+\lambda_{1}\lambda_{2}\nonumber \\
 & \ge & ac\lambda_{1}^{2}+bd\lambda_{2}^{2}+2\sqrt{adbc}\lambda_{1}\lambda_{2}+\lambda_{1}\lambda_{2}\nonumber \\
 & = & \big(\sqrt{ac}\lambda_{1}+\sqrt{bd}\lambda_{2}\big)^{2}+\lambda_{1}\lambda_{2}\nonumber \\
 & \ge & \lambda_{1}\lambda_{2}=F(\xi).\nonumber \end{eqnarray}
 But if $\lambda_{1}\lambda_{2}\ge0,$ then (\ref{eq:FirstEstimate})
still holds, and this shows that $F(d\varphi_{w}^{t}\xi)\ge F(\xi).$
Thus we obtain $F(d\varphi_{w}^{t}\xi)\ge F(\xi),$ for all $t\ge0$
and all $\xi\in\mathcal{P}(w).$ This implies that \begin{equation}
Q(d\varphi_{w}^{t}\xi)\ge Q(\xi),\text{\ for\ all\ }t\ge0\text{\ and\ all\ }\xi\in\mathcal{P}(w).\label{eq:LyapunovIncreasing}\end{equation}

If we let \[
\tau(w,t)=\big(2b(t,w)c(t,w)+1\big)^{1/2},\]
then it follows from (\ref{eq:FirstEstimate}) that \begin{equation}
Q(d\varphi_{w}^{t}\xi)\ge\tau(w,t)Q(\xi)\ge0,\text{\ for\ }\xi\in\mathcal{K}_{w}^{u}\text{\ and\ }t\ge0.\label{eq:TauUnstable}\end{equation}
 If $t>0$ and (\ref{eq:StrictUnstableInvariance}) holds, then $\tau(w,t)>1.$

Similarly, \begin{equation}
Q(\xi)\le\tau(w,t)Q(d\varphi_{w}^{t}\xi)\le0,\text{\ for\ }\xi\in d\varphi^{-t}\mathcal{K}_{\varphi^{t}w}^{s}\text{\ and\ }t\ge0.\label{eq:TauStable}\end{equation}

If $\delta>0,$ then there exists $C=C(\delta)>0$ such that if $\mathcal{K}_{w}^{u}=\mathcal{K}_{w}^{+}$
and $\widetilde{\mathcal{K}}_{w}\subset\mathcal{K}_{w}^{+}$ is a
cone such that the slopes of the boundary lines of $\widetilde{\mathcal{K}}_{w}$
are $\delta$ and $1/\delta$ (or if $\mathcal{K}_{w}^{u}=\mathcal{K}_{w}^{+}$
and $\widetilde{\mathcal{K}}_{w}\subset\mathcal{K}_{w}^{-}$ is a
cone such that the slopes of the boundary lines of $\widetilde{\mathcal{K}}_{w}$
are $-\delta$ and $-1/\delta$), then

\begin{equation}
C||\xi||\le|Q(\xi)|,\text{\ for\ all\ }\xi\in\widetilde{\mathcal{K}}_{w}.\label{eq:QLowerBound}\end{equation}
 Also note that for $w$ such that $\mathcal{K}_{w}^{u}=\mathcal{K}_{w}^{+},$
in particular for $w\in T^{1}\mathcal{N}_{0},$ where $\mathcal{N}_{0}=S\setminus(\cup_{i=1}^{q}\mathcal{D}_{i}),$
we have

\begin{equation}
|Q(\xi)|\le||\xi||,\text{\ for\ all\ \ensuremath{\xi\in\mathcal{P}(w).}}\label{eq:QUpperBound}\end{equation}
 If $\mathcal{W}$ is a compact subset of $(T^{1}S)\setminus(\cup_{i=1}^{q}\mathcal{Z}_{i}),$
then it follows from Remark \ref{rem:ConeAngleCompactSet} that there
is a constant $\widetilde{C}=\widetilde{C}(\mathcal{W})>0$ such that
\begin{equation}
|Q(\xi)|\le\widetilde{C}||\xi||,\text{\ for\ all\ }\xi\in\mathcal{P}(w),\text{\ for\ all\ }w\in\mathcal{W}.\label{eq:QUpperBoundOnZComplement}\end{equation}

\begin{lem}
\label{lem:LineField} For all $w\in T^{1}S,$ if we let $E_{w}^{u}$
and $E_{w}^{s}$ be defined by\[
E_{w}^{u}:=\bigcap_{t\ge0}d\varphi^{t}(\mathcal{K}_{\varphi^{-t}w}^{u})\text{{\ and\ }}E_{w}^{s}:=\bigcap_{t\ge0}d\varphi^{-t}(\mathcal{K}_{\varphi^{t}w}^{s}),\]
 then $E_{w}^{u}$ and $E_{w}^{s}$ are lines in $T_{w}(T^{1}S).$
Moreover, if $w\in(T^{1}S)\setminus(\cup_{i=1}^{q}T^{1}(\partial\mathcal{C}_{i})),$
then $E_{w}^{u}\subset\interior\mathcal{K}_{w}^{u}$ and $E_{w}^{s}\subset\interior\mathcal{K}_{w}^{s}.$ \end{lem}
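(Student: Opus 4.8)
The plan is to prove the statement for $E^{u}$; the statement for $E^{s}$ then follows by reversing the geodesic flow, which interchanges $\mathcal{K}^{u}$ with $\mathcal{K}^{s}$ and forward with backward orbits. For the setup: for $0\le s\le t$, applying \eqref{eq:UnstableConeInvariance} at the vector $\varphi^{-t}w$ over time $t-s$ and then pushing forward by $d\varphi^{s}$ gives $d\varphi^{t}\mathcal{K}^{u}_{\varphi^{-t}w}\subseteq d\varphi^{s}\mathcal{K}^{u}_{\varphi^{-s}w}$, so $\{d\varphi^{t}\mathcal{K}^{u}_{\varphi^{-t}w}\}_{t\ge 0}$ is a nested, decreasing family of closed double cones in $\mathcal{P}(w)$; intersecting with the unit circle of $(\mathcal{P}(w),\langle\langle\cdot,\cdot\rangle\rangle)$ — a nested family of nonempty compacta — shows that $E^{u}_{w}$ is a nonempty closed cone with $E^{u}_{w}\subseteq\mathcal{K}^{u}_{w}$, and splitting the defining intersection at a fixed parameter also yields the flow invariance $d\varphi^{t}E^{u}_{w}=E^{u}_{\varphi^{t}w}$. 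It then remains to show that $E^{u}_{w}$ is at most a line, and that it lies in $\interior\mathcal{K}^{u}_{w}$ when $w\notin\cup_{i}T^{1}(\partial\mathcal{C}_{i})$.

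The key is a dichotomy for the backward orbit $\{\varphi^{-t}w:t\ge 0\}$, coming from the geometry of the nested disks $\mathcal{C}_{i}\subset\mathcal{D}_{i}\subset\mathcal{E}_{i}$ — the $\mathcal{E}_{i}$ are disjoint; a geodesic that leaves $\mathcal{C}_{i}$ thereafter leaves $\mathcal{D}_{i}$ without re-entering $\mathcal{C}_{i}$; $\mathcal{D}_{i}$ is strongly convex; and the distance to $\partial\mathcal{C}_{i}$ is strictly convex along geodesics in the negative-curvature annulus $\mathcal{D}_{i}\setminus\mathcal{C}_{i}$. Either (R) the backward orbit enters $T^{1}\mathcal{N}_{0}$ at arbitrarily large times, or (T) it is eventually confined to a single $\mathcal{D}_{i}$; in case (T), either $w$ is tangent to $\partial\mathcal{C}_{i}$ (so the curvature vanishes identically along the orbit), or the orbit is eventually contained in $\mathcal{D}_{i}\setminus\mathcal{C}_{i}$ and is backward-asymptotic to the closed geodesic along $\partial\mathcal{C}_{i}$.

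To see that $E^{u}_{w}$ is a line: in case (R), by \eqref{eq:UnstableConeInvariance} the map $d\varphi^{t}$ sends $\mathcal{K}^{u}_{\varphi^{-t}w}$ into $\mathcal{K}^{u}_{w}$, hence is non-expanding from the Hilbert (projective) metric of the former cone to that of the latter; and over each sojourn of the backward orbit in $T^{1}\mathcal{N}_{0}$ — which lasts a uniformly bounded-below amount of time, in a region where $\mathcal{K}^{u}=\mathcal{K}^{+}$ and the curvature is bounded away from $0$ — Birkhoff's theorem together with the compactness of $T^{1}S$ yields a uniform contraction factor $\theta_{0}<1$. Since (R) furnishes arbitrarily many such sojourns, the Hilbert diameter of $d\varphi^{t}\mathcal{K}^{u}_{\varphi^{-t}w}$ in $\mathcal{K}^{u}_{w}$ tends to $0$, so $E^{u}_{w}$ is a line. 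In case (T) with $w$ tangent to $\partial\mathcal{C}_{i}$, one has $\mathcal{K}^{u}=\mathcal{K}^{+}$ and, in $H,V$-coordinates, $d\varphi^{t}$ acts by the matrix with rows $(1,t)$ and $(0,1)$, so $E^{u}_{w}$ is visibly the line $V=0$. In the remaining subcase of (T), for all large $t$ one has $\mathcal{K}^{u}_{\varphi^{-t}w}=\mathcal{K}^{+}$ and curvature $\le 0$ along the backward orbit; Sturm comparison with the flat Jacobi equation shows that the $H,V$-matrix of $d\varphi$ over each unit step is nonnegative with off-diagonal entry $\ge 1$, so the $H,V$-matrix of $d\varphi^{n}$ over the orbit (past the confinement time) has upper-right entry at least $n+O(1)$; this forces the angular width of $d\varphi^{n}\mathcal{K}^{+}_{\varphi^{-n}w}$ to $0$, and composing with the fixed linear map from the finitely many earlier steps again shows $E^{u}_{w}$ is a line.

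For the interior containment, suppose $w\notin\cup_{i}T^{1}(\partial\mathcal{C}_{i})$, so the backward orbit is not the closed geodesic along any $\partial\mathcal{C}_{i}$ and hence meets a point of negative curvature at some finite negative time $-s$, along a stretch on which $\mathcal{K}^{u}=\mathcal{K}^{+}$: either it reaches $T^{1}\mathcal{N}_{0}$, and \eqref{eq:StrictUnstableInvariance} (applied at $\varphi^{-t}w$) gives $d\varphi^{t}\mathcal{K}^{u}_{\varphi^{-t}w}\subset\interior\mathcal{K}^{u}_{w}$ for $t>s$; or it is trapped in some $\mathcal{D}_{i}\setminus\mathcal{C}_{i}$, where $\mathcal{K}^{u}=\mathcal{K}^{+}$ and the curvature is strictly negative, and \eqref{eq:StrictNegConeInv} gives the same. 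In either case $E^{u}_{w}\subseteq d\varphi^{t}\mathcal{K}^{u}_{\varphi^{-t}w}\subset\interior\mathcal{K}^{u}_{w}$. I expect the main obstacle to be the last subcase of (T) in the line argument: there the hyperbolicity degenerates as the curvature along the backward orbit tends to $0$, so the Birkhoff mechanism of case (R) collapses and one must instead extract contraction from the merely polynomial divergence of an off-diagonal matrix entry. A secondary technical point is that making the Birkhoff estimates of case (R) rigorous requires carrying the unstable-cone invariance through excursions of the orbit into the caps, where $\mathcal{K}^{+}$ itself is not invariant, which is exactly what Lemma \ref{lem:ImprovedBurnsGerber} provides.
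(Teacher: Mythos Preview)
Your argument is correct and shares the same two-case skeleton as the paper's (backward orbit recurrent in $T^{1}\mathcal{N}_{0}$ versus eventually trapped near some $\partial\mathcal{C}_{i}$), but the mechanisms you invoke in each case differ from the paper's. In the recurrent case the paper does not use the Hilbert metric or Birkhoff's theorem; instead it uses the Lyapunov function $Q$ introduced just before the lemma: from \eqref{eq:Tau0Factor} and \eqref{eq:QLowerBound}--\eqref{eq:QUpperBound} it gets a uniform lower bound $\|d\varphi^{t_{n+1}-t_{1}+\beta}\xi\|\ge\tau_{0}^{n}CC_{\beta}^{-1}$ for unit $\xi\in\mathcal{K}^{u}_{\varphi^{-t_{n+1}-\beta}w}$, and then uses area-preservation of $d\varphi$ on $\mathcal{P}$ to conclude that the cone angle collapses. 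Your Birkhoff argument reaches the same conclusion without ever needing $Q$ or the symplectic structure, at the cost of importing an external contraction principle; the paper's route is more self-contained and, importantly, reuses $Q$, which is needed anyway for the length estimates in Lemma~\ref{lem:StableCurveContraction}. In the trapped case the paper does not split off the tangent subcase or track growth of a matrix entry; it works directly with the Riccati equation \eqref{eq:Riccati}, comparing the slopes $m_{1,B},m_{2,B}$ of the pushed-forward cone boundaries against the explicit solutions for constant curvature $0$ and $-\epsilon$, then uses \eqref{eq:RiccatiDifference} to show $m_{2,B}-m_{1,B}$ is decreasing and bounded by $\sqrt{\epsilon}$. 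Your Sturm/Jacobi version (convexity of $j_{1},j_{2}$ giving $a\ge 1$, $b\ge n$, hence slope gap $1/(ab)\le 1/n$) is the dual picture and arguably more elementary, since it avoids the $\epsilon\to 0$ limiting argument. The interior-containment paragraph is essentially identical to the paper's.
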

\begin{proof}
Let $w\in T^{1}S.$ We will show that $E_{w}^{u}$ is a line; the
proof that $E_{w}^{s}$ is a line is similar. If $v\in T_{x}^{1}S,$
where $x\in\mathcal{D}$ and $\mathcal{D}=\mathcal{D}_{i}$ for some
$i\in\{1,\dots,q\},$ then either there exists $t>0$ such that $\varphi^{-t}v\in T^{1}\mathcal{N}_{0}$
or $\varphi^{-t}v$ remains in $\mathcal{D}$ for all $t\ge0.$ If
the latter possibility occurs, then either $v$ is tangent to $\partial\mathcal{C},$
where $\mathcal{C}=\mathcal{C}_{i}$ or $\gamma_{v}(t)$ becomes asymptotic
to $\partial\mathcal{C}$ as $t\to-\infty.$ We consider two cases
for the given $w\in T^{1}S.$

\emph{Case 1.} There exist arbitrarily large values of $t$ such that
$\varphi^{-t}w\in T^{1}\mathcal{N}_{0}.$ By the strong convexity
of the disks $\mathcal{D}_{i},$ $i=1,\dots,q,$ (as discussed in
Section 3), there exists $\beta>0$ such that any orbit of the geodesic
flow that enters $T^{1}\mathcal{N}_{0}$ must spend more than $\beta$
units of time within $T^{1}\mathcal{N}_{0}$ before exiting (if it
exits at all). Thus there is a sequence $(t_{n})_{n=1,2,\dots}$ such
that for $n=1,2,\dots,$ we have $-t_{n+1}<-t_{n}-\beta<-t_{n}<0,$
and $\varphi^{t}w\in T^{1}\mathcal{N}_{0}$ for all $t\in[-t_{n}-\beta,-t_{n}].$
It follows from (\ref{eq:StrictUnstableInvariance}) , (\ref{eq:TauUnstable})
and a compactness argument on $\overline{T^{1}\mathcal{N}_{0}}$ that
there exists $\tau_{0}>1$ such that for all $v\in T^{1}S$ with $\varphi^{t}v\in T^{1}\mathcal{N}_{0}$
for all $t\in[-\beta,0]$ we have\begin{equation}
Q_{v}(d\varphi_{\varphi^{-\beta}v}^{\beta}\xi)\ge\tau_{0}Q_{\varphi^{-\beta}v}(\xi),\text{\ for\ all\ }\xi\in\mathcal{K}_{\varphi^{-\beta}v}^{u}.\label{eq:Tau0Factor}\end{equation}
It also follows from (\ref{eq:StrictUnstableInvariance}) and a compactness
argument on $\overline{T^{1}\mathcal{N}_{0}}$ that there exists $\delta>0$
such that for all $v\in T^{1}S$ with $\varphi^{t}v\in T^{1}\mathcal{N}_{0}$
for all $t\in[-\beta,0]$ we have

\begin{equation}
d\varphi^{\beta}\mathcal{K}_{\varphi^{-\beta}v}^{u}\subset\widetilde{\mathcal{K}}_{v},\label{eq:KTildeContainment}\end{equation}
 where $\widetilde{\mathcal{K}}_{v}\subset\mathcal{P}(v)$ is a cone
whose boundary lines have slopes $\delta$ and $1/\delta$ in the
$H,V$ coordinates. Let $C=C(\delta)$ be as in (\ref{eq:QLowerBound}).

Define $C_{\beta}:=\sup\{||d\varphi_{v}^{-\beta}||:v\in T^{1}S\}.$
Let $\xi\in\mathcal{K}_{\varphi^{-t_{n+1}-\beta}w}^{u}$ and suppose
$||\xi||=1.$ Then it follows from (\ref{eq:QLowerBound}) and (\ref{eq:KTildeContainment})
that \[
Q_{\varphi^{-t_{n+1}}w}(d\varphi^{\beta}\xi)\ge C||d\varphi^{\beta}\xi||\ge CC_{\beta}^{-1}.\]
 Moreover, by (\ref{eq:LyapunovIncreasing}), (\ref{eq:QUpperBound}),
and (\ref{eq:Tau0Factor}), we have\begin{equation}
||d\varphi^{t_{n+1}-t_{1}+\beta}\xi||\ge Q_{\varphi^{-t_{1}}w}(d\varphi^{t_{n+1}-t_{1}}d\varphi^{\beta}\xi)\ge\tau_{0}^{n}Q_{\varphi^{-t_{n+1}}w}(d\varphi^{\beta}\xi)\ge\tau_{0}^{n}CC_{\beta}^{-1}.\label{eq:DiskExpansion}\end{equation}
 The intersection of the unit disk in $\mathcal{P}(\varphi^{-t_{n+1}-\beta}w)$
and $\mathcal{K}_{\varphi^{-t_{n+1}-\beta}w}^{u}$ has area $\pi/2$,
and the image of this intersection under $d\varphi^{t_{n+1}-t_{1}+\beta}$
also has area $\pi/2.$ On the other hand, (\ref{eq:DiskExpansion})
implies that this image contains the intersection of the disk of radius
$\tau_{0}^{n}CC_{\beta}^{-1}$ with $d\varphi^{t_{n+1}-t_{1}+\beta}\mathcal{K}_{\varphi^{-t_{n+1}-\beta}w}^{u}.$
Since $\lim_{n\to\infty}\tau_{0}^{n}CC_{\beta}^{-1}=\infty,$ the
cone angle of $d\varphi^{t_{n+1}-t_{1}+\beta}\mathcal{K}_{\varphi^{-t_{n+1}-\beta}w}^{u}$
goes to $0$ as $n\to\infty.$ Thus $E_{\varphi^{-t_{1}}w}^{u}$is
a line, and it follows from the cone invariance (\ref{eq:UnstableConeInvariance})
that $E_{w}^{u}$ is a line.

\emph{Case 2.} Either $w$ is tangent to $\partial\mathcal{C}$ or
$\gamma_{w}(t)$ is asymptotic to $\partial\mathcal{C}$ as $t\to-\infty.$
Then there exists $t_{0}\le0$ such that $\mathcal{K}_{\gamma_{w}(t)}^{u}=\mathcal{K}_{\gamma_{w}(t)}^{+}$
and the curvature $K(\gamma_{w}(t))\le0$ for all $t\le t_{0}.$ As
in case 1, if $E_{\varphi^{t_{0}}w}^{u}$ is a line, then so is $E_{w}^{u}.$
Therefore we may assume that $t_{0}=0.$ Let $\epsilon>0$ and let
$T=T(\epsilon)\le0$ be such that $\ensuremath{-\epsilon\le K(\gamma_{w}(t))\le0}$
for all $t\le T.$ For $B>0,$ let $m_{1,B}(t)$ and $m_{2,B}(t)$
be solutions to the Riccati equation along $\gamma_{w}(t)$ (i.e.,
(\ref{eq:Riccati}) with $w_{0}$ replaced by $w$) such that $m_{1,B}(T-B)=0$
and $\lim_{t\to(T-B)^{+}}m_{2,B}(t)=\infty.$ Since the boundary lines
of $\mathcal{K}_{\varphi^{-B}w}^{u}$ have slopes $0$ and $\infty,$
$m_{1,B}(t)$ and $m_{2,B}(t)$ represent the slopes of the boundary
lines of $d\varphi^{t-(T-B)}(\mathcal{K}_{\varphi^{T-B}w}^{u})$ for
$t>T-B.$ If $K(\gamma_{w_{0}}(t))$ were replaced by the constant
$-\epsilon$ in (\ref{eq:Riccati}), then the solution $\widetilde{m}_{2,B}$
with $\lim_{t\to(T-B)^{+}}\widetilde{m}_{2,B}(t)=\infty$ would be
$\widetilde{m}_{2,B}(t)=\sqrt{\epsilon}\coth\big(\sqrt{\epsilon}(t-(T-B))\big)$
for $t>T-B.$ If $K(\gamma_{w_{0}}(t))$ were replaced by $0,$ then
the solution $\widetilde{m}_{1,B}$ with $\widetilde{m}_{1,B}(T-B)=0$
would be identically $0$ for all $t.$ Therefore, by a comparison
lemma (see, e.g., \cite{BallmannBrinBurns}), we obtain $\lim\sup_{B\to\infty}m_{2,B}(T)\le\lim_{B\to\infty}\widetilde{m}_{2,B}(T)=\sqrt{\epsilon}$
and $\lim\inf_{B\to\infty}m_{1,B}(T)\ge\lim_{B\to\infty}\widetilde{m}_{1,B}(T)=0.$
Thus $E_{\varphi^{T}w}^{u}$ is contained in a cone within $\mathcal{K}_{\varphi^{T}w}^{+}$
that is bounded by lines of slopes $0$ and $\sqrt{\epsilon}.$ By
subtracting the equation (\ref{eq:Riccati}) with $m=m_{1,B}$ from
the equation (\ref{eq:Riccati}) with $m=m_{2,B},$ we obtain \begin{equation}
(m_{2,B}-m_{1,B})'(t)=-[(m_{1,B}+m_{2,B})(m_{2,B}-m_{1,B})](t)<0,\text{\ for\ }t\in(-B,0].\label{eq:RiccatiDifference}\end{equation}
 Thus $\lim\sup_{B\to\infty}(m_{2,B}-m_{1,B})(0)\le\sqrt{\epsilon},$
which implies that $E_{w}^{u}$ is contained in a cone within $\mathcal{K}_{w}^{+}$
that is bounded by lines whose slopes differ by at most $\sqrt{\epsilon}.$
Since $\epsilon$ was arbitrary, $E_{w}^{u}$ is a line.

In both cases, if $w$ is not tangent to the boundary of a cap, then
there exist $r_{1}>r_{2}>0$ such that $d\varphi^{r_{1}-r_{2}}\mathcal{K}_{\varphi^{-r_{1}}w}^{u}\subset\interior\mathcal{K}_{\varphi^{-r_{2}}w}^{u}.$
Thus $E_{w}^{u}\subset\varphi^{r_{2}}(\interior\mathcal{K}_{\varphi^{-r_{2}}w}^{u})\subset\interior\mathcal{K}_{w}^{u}.$
A similar argument shows that if $w$ is not tangent to the boundary
of a cap, then $E_{w}^{s}\subset\interior\mathcal{K}_{w}^{u}.$ \end{proof}
\begin{defn}
The line fields $v\to E_{v}^{s}$ and $v\to E_{v}^{u}$ on $T^{1}S$
obtained in Lemma \ref{lem:LineField} will be called the \emph{stable}
\emph{and} \emph{unstable line fields}, respectively. Note that $d\varphi^{t}E_{v}^{u}=E_{\varphi^{t}v}^{u}$
and $d\varphi^{t}E_{v}^{s}=E_{\varphi^{t}v}^{s}$ for $v\in T^{1}S$
and $t\in\mathbb{R}.$ A $C^{1}$ regular curve $\sigma(t)$, $a_{1}\le t\le a_{2},$
$a_{1}<a_{2},$ in $(T^{1}S)\setminus(\cup_{i=1}^{q}T^{1}(\partial\mathcal{C}_{i}))$
is defined to be a \emph{stable {[}unstable{]} curve} if $\sigma'(t)\in E_{\sigma(t)}^{s}[E_{\sigma(t)}^{u}]$
for all $t\in[a_{1},a_{2}].$
\end{defn}
The following lemma allows us to integrate the stable and unstable
line fields to obtain stable and unstable curves.
\begin{lem}
\label{lem:ContinuousLineFields} The stable and unstable line fields
given in Lemma \ref{lem:LineField} are continuous on $(T^{1}S)\setminus(\cup_{i=1}^{q}T^{1}(\partial\mathcal{C}_{i}))$
\textup{. }\end{lem}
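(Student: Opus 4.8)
**The plan is to prove continuity of the unstable line field $v\mapsto E_v^u$ (the stable case being symmetric) by exhibiting $E_v^u$ as a locally uniform limit of the cones $d\varphi^t\mathcal{K}^u_{\varphi^{-t}v}$, using the upper semi-continuity already established in Lemma \ref{lem:UpperSemiContinuity} together with the quantitative contraction of cone angles extracted from the proof of Lemma \ref{lem:LineField}.** Fix $v_0\in(T^1S)\setminus(\cup_i T^1(\partial\mathcal{C}_i))$ and a sequence $v_n\to v_0$ in that set, and choose $\xi_n\in E^u_{v_n}$ with $\|\xi_n\|=1$; by passing to a subsequence we may assume $\xi_n\to\xi_0$ for some unit vector $\xi_0\in\mathcal{P}(v_0)$, and it suffices to show $\xi_0\in E^u_{v_0}$, i.e.\ $\xi_0\in d\varphi^t\mathcal{K}^u_{\varphi^{-t}v_0}$ for every $t\ge 0$. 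Equivalently, writing $\eta_n=d\varphi^{-t}\xi_n\in\mathcal{K}^u_{\varphi^{-t}v_n}$ (using $d\varphi^{-t}E^u_{v_n}=E^u_{\varphi^{-t}v_n}\subset\mathcal{K}^u_{\varphi^{-t}v_n}$) and $\eta_0=d\varphi^{-t}\xi_0$, continuity of $\varphi^{-t}$ gives $\varphi^{-t}v_n\to\varphi^{-t}v_0$ and $\eta_n\to\eta_0$; provided $\varphi^{-t}v_0$ is not tangent to any $\partial\mathcal{C}_i$, upper semi-continuity of the unstable cone field at $\varphi^{-t}v_0$ (Lemma \ref{lem:UpperSemiContinuity}) yields $\eta_0\in\mathcal{K}^u_{\varphi^{-t}v_0}$, hence $\xi_0\in d\varphi^t\mathcal{K}^u_{\varphi^{-t}v_0}$. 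Taking the intersection over all such $t$, and noting that the set of $t$ for which $\varphi^{-t}v_0$ is tangent to some $\partial\mathcal{C}_i$ is closed and nowhere dense (a geodesic meeting $\partial\mathcal{C}_i$ tangentially does so only at isolated parameter values, since $\partial\mathcal{C}_i$ is a closed geodesic and two distinct geodesics share isolated points), while the intersection defining $E^u_{v_0}$ may be restricted to any unbounded set of $t$, we conclude $\xi_0\in E^u_{v_0}$.

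**The remaining point is to rule out the degenerate possibility $\xi_0=0$**, i.e.\ to show that no unit vectors $\xi_n\in E^u_{v_n}$ can converge to $0$ — but they are unit vectors, so this cannot happen; the real content is that the limit line is well-defined, which is exactly the subsequential-limit argument above once we know the limiting object is one-dimensional, and that is guaranteed because $E^u_{v_0}$ is a line by Lemma \ref{lem:LineField}. To upgrade convergence from subsequences to the full sequence and to get genuine continuity of the \emph{line} field (not merely of a choice of direction), observe that any subsequential limit direction of the $\xi_n$ lies in $E^u_{v_0}$, and $E^u_{v_0}$ is one-dimensional, so all subsequential limits agree up to sign; this forces the lines $\mathbb{R}\xi_n$ to converge to the line $E^u_{v_0}$ in the space of lines in the (continuously varying) planes $\mathcal{P}(v_n)$.

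**The main obstacle is handling the parameter values $t$ at which $\varphi^{-t}v_0$ is tangent to some $\partial\mathcal{C}_i$**, where upper semi-continuity of the cone field is not available, together with making precise the "line space" topology across the varying fibers $\mathcal{P}(v)$. The first is dispatched by the observation that $E^u_{v_0}=\bigcap_{t\in A}d\varphi^t\mathcal{K}^u_{\varphi^{-t}v_0}$ for any unbounded $A\subset[0,\infty)$ (by cone invariance \eqref{eq:UnstableConeInvariance}), so we simply avoid the bad parameters; the second is routine once one trivializes $\mathcal{P}$ locally using the $H,V$ coordinates introduced in Section \ref{sec:Cone Fields}, identifying each line through $0$ in $\mathcal{P}(v)$ with its slope in $\mathbb{R}\cup\{\infty\}\cong\mathbb{R}/\pi\mathbb{Z}$ and checking that the slope depends continuously on $v$. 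A clean way to package the whole argument is: for each fixed admissible $t$, the map $v\mapsto d\varphi^t\mathcal{K}^u_{\varphi^{-t}v}$ is an upper semi-continuous cone field (being the image under the homeomorphism $d\varphi^t$ of the upper semi-continuous field $\mathcal{K}^u$ pulled back along $\varphi^{-t}$); an intersection over $t$ in a countable unbounded set of nested upper semi-continuous closed cone fields is again upper semi-continuous; and an upper semi-continuous field of cones that are all lines is automatically continuous. Plugging in Lemma \ref{lem:LineField} to know the intersection is a line completes the proof.
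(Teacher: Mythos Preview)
Your argument is correct and follows essentially the same route as the paper's proof: both rely on the upper semi-continuity of the unstable cone field (Lemma~\ref{lem:UpperSemiContinuity}) transported by the flow, together with the fact from Lemma~\ref{lem:LineField} that the intersection $E^u_{v_0}$ is a line. The paper picks a single time $T=T(\epsilon)$ so that $d\varphi^T\mathcal{K}^u_{\varphi^{-T}v}$ already lies in an $\epsilon$-cone about $E^u_v$, and then uses upper semi-continuity once at $\varphi^{-T}v$; you instead take a sequential limit and intersect over all $t$, packaging the conclusion as ``an upper semi-continuous field of lines is continuous.'' These are the same idea in different dress.

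One small simplification: your worry about parameters $t$ at which $\varphi^{-t}v_0$ is tangent to some $\partial\mathcal{C}_i$ is unnecessary. Since each $\partial\mathcal{C}_i$ is itself a closed geodesic, a geodesic tangent to it at one instant coincides with it for all time; hence if $v_0\notin\cup_i T^1(\partial\mathcal{C}_i)$ then $\varphi^{-t}v_0\notin\cup_i T^1(\partial\mathcal{C}_i)$ for every $t$, and upper semi-continuity applies at every $\varphi^{-t}v_0$ without exception. Your claim that tangential meetings occur only at isolated parameters is thus misstated (they occur either everywhere or nowhere), but the conclusion you draw from it is stronger than needed and the argument goes through.
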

\begin{proof}
We will give the proof for the unstable line field. The proof for
the stable line field is similar.

In order to compare a cone in $\mathcal{P}(w_{1})$ with a cone in
$\mathcal{P}(w_{2}),$ or a line through the origin in $\mathcal{P}(w_{1})$
with a line through the origin in $\mathcal{P}(w_{2})$, we will use
$H,V$ coordinates on both $\mathcal{P}(w_{1})$ and $\mathcal{P}(w_{2})$
to identify $\mathcal{P}(w_{1})$ with $\mathcal{P}(w_{2}).$

Suppose $v\in T^{1}S$ and $v$ is not tangent to the boundary of
a cap. Let $\epsilon>0$ and let $\mathcal{K}_{v,\epsilon}$ be the
closed cone in $\mathcal{P}(v)$ centered at $E_{v}^{u}$ and of cone
angle $\epsilon$ in the $H,V$ coordinate system. By Lemma \ref{lem:LineField}
and (\ref{eq:UnstableConeInvariance}), there exists $T>0$ such that
$d\varphi^{T}\mathcal{K}_{\varphi^{-T}v}^{u}\subset\interior\mathcal{K}_{v,\epsilon}.$
By the continuity of $d\varphi^{T},$ there exists a closed cone $\tilde{\mathcal{K}}_{\varphi^{-T}v}\subset\mathcal{P}(\varphi^{-T}v)$
such that $\mathcal{K}_{\varphi^{-T}v}^{u}\subset\interior\tilde{\mathcal{K}}_{\varphi^{-T}v}$
and $d\varphi^{T}\tilde{\mathcal{K}}_{\varphi^{-T}v}\subset\interior\mathcal{K}_{v,\epsilon}.$
From the continuity of $\varphi^{-T}$ at $v$ and the upper semi-continuity
of the unstable cone field at $\varphi^{-T}v,$ we know that for $w\in T^{1}S$
sufficiently close to $v,$ we have $\mathcal{K}_{\varphi^{-T}w}^{u}\subset\tilde{\mathcal{K}}_{\varphi^{-T}w},$
where $\tilde{\mathcal{K}}_{\varphi^{-T}w}\subset\mathcal{P}(\varphi^{-T}w)$
is a copy of $\tilde{\mathcal{K}}_{\varphi^{-T}v}$ (using the $H,V$
coordinates as described in the preceding paragraph). Moreover, by
the continuity of $\varphi^{-T}$ and $d\varphi^{T},$ for $w$ sufficiently
close to $v,$ we have $d\varphi^{T}(\tilde{\mathcal{K}}_{\varphi^{-T}w})\subset\interior\mathcal{K}_{w,\epsilon},$
where $\mathcal{K}_{w,\epsilon}\subset\mathcal{P}(w)$ is a copy of
$\mathcal{K}_{v,\epsilon}.$ Therefore \[
E_{w}^{u}\subset d\varphi^{T}(\mathcal{K}_{\varphi^{-T}w}^{u})\subset d\varphi^{T}(\tilde{\mathcal{K}}_{\varphi^{-T}w})\subset\interior\mathcal{K}_{w,\epsilon},\]
 which implies that $E_{w}^{u}$ makes angle less than $\epsilon$
with $E_{v}^{u}.$
\end{proof}

\section{Tubular Neighborhoods of tr$(\rho)$ And Their Lifts\label{sec:TubularNeighborhoods}}

For the rest of this paper, we let $\mathcal{N}=S\setminus(\cup_{i=1}^{q}\mathcal{C}_{i}),$
$\mathcal{N}_{0}=S\setminus(\cup_{i=1}^{q}\mathcal{D}_{i}),$ and
we let $\mathcal{N}_{1}$ and $\mathcal{N}_{2}$ be open subsets of
$S$ such that $\overline{\mathcal{N}_{2}}\subset\mathcal{N}_{1},$
$\overline{\mathcal{N}_{1}}\subset\mathcal{N}_{0},$ and the closed
geodesic $\rho:[0,L]\to\mathcal{N}$ described in Section \ref{sec:Construction-of-Metrics}
has tr$(\rho)\subset\mathcal{N}_{2}.$ We let $(\tau_{1},\tau_{2})$
be Fermi coordinates along $\rho,$ where $\tau_{1}\in\mathbb{R}/L\mathbb{Z}$
is the coordinate along $\rho$ and $\tau_{2}\in[-\epsilon_{0},\epsilon_{0}]$
is the coordinate along geodesics perpendicular to $\rho.$ Here $\epsilon_{0}>0$
is chosen sufficiently small so that all points with Fermi coordinates
in $(\mathbb{R}/L\mathbb{Z})\times[-\epsilon_{0},\epsilon_{0}]$ are
contained in $\mathcal{N}_{2}.$ For $0<\epsilon\le\epsilon_{0},$
let\begin{equation}
F(\epsilon)=\{p\in S:\text{dist}(p,\text{tr}(\rho)\le\epsilon\}.\label{eq:F}\end{equation}
 Each point in $F(\epsilon)$ has Fermi coordinates $(\tau_{1},\tau_{2})$
in $(\mathbb{R}/L\mathbb{Z})\times[-\epsilon_{0},\epsilon_{0}]$,
but if $\rho$ is not simple, then some of the points in $F(\epsilon)$
will have more than one such representation in Fermi coordinates.
In order to handle the case in which $\rho$ is not simple, we let
\begin{equation}
\widehat{F}(\epsilon)=(\mathbb{R}/L\mathbb{Z})\times[-\epsilon,\epsilon],\label{eq:FHat}\end{equation}
 and let $\widehat{\pi}:\widehat{F}(\epsilon)\to F(\epsilon)$ be
the projection that takes $(\tau_{1},\tau_{2})$ to the point in $S$
with Fermi coordinates $(\tau_{1},\tau_{2}).$ Define $\widehat{h}=\widehat{\pi}^{*}h$
to be the covering metric. Let $\widehat{\rho}:[0,L]\to\widehat{F}(\epsilon)$
be the \emph{simple} closed geodesic in $(\widehat{F}(\epsilon),\widehat{h})$
that is the lift of $\rho.$ We also define \begin{equation}
\widetilde{F}(\epsilon)=\mathbb{R}\times[-\epsilon,\epsilon],\label{eq:FTilde}\end{equation}
 which is the universal covering space of $\widehat{F}(\epsilon).$
Let $\widetilde{\pi}:\widetilde{F}(\epsilon)\to\widehat{F}(\epsilon)$
be the covering map, and let $\widetilde{h}=\widetilde{\pi}^{*}\widehat{h}$
be the covering metric. We let $\widetilde{\rho}:\mathbb{R}\to\widetilde{F}(\epsilon)$
be the geodesic in $(\widetilde{F}(\epsilon),\widetilde{h})$ that
is the lift of $\widehat{\rho}.$

If $(\tau_{1},\tau_{2})$ are Fermi coordinates along $\widehat{\rho}$
or $\widetilde{\rho}$ and $\gamma:I\to\widehat{F}(\epsilon)$ or
$\gamma:I\to\widetilde{F}(\epsilon)$ is a geodesic whose trace is
contained in the region in which $\tau_{2}\ne0,$ then the negative
curvature of $(\widehat{F}(\epsilon),\widehat{h})$ and $(\widetilde{F}(\epsilon),\widetilde{h})$
implies that $t\mapsto|\tau_{2}(\gamma(t))|$ is a strictly convex
function.

It follows from the simple connectivity of $\widetilde{F}(\epsilon)$
and the negative curvature of $(\widetilde{F}(\epsilon),\widetilde{h})$
that for each $\widetilde{p}\in\widetilde{F}(\epsilon),$ there is
a unique vector $\widetilde{Z}(\widetilde{p})\in T_{\tilde{p}}^{1,\tilde{h}}(\widetilde{F}(\epsilon))$
such that $\gamma_{\tilde{Z}(\tilde{p})}$ is asymptotic to $\widetilde{\rho}$
as $t\to\infty.$ We let $\widehat{Z}$ be the (unique) vector field
on $\widehat{F}(\epsilon)$ obtained by applying $d\widetilde{\pi}$
to $\widetilde{Z}.$ Then for each $\widehat{p}\in\widehat{F}(\epsilon),$
$\gamma_{\widehat{Z}(\widehat{p})}$ is asymptotic to $\widehat{\rho}$
as $t\to\infty.$

We will call $\widehat{Z}$ and $\widetilde{Z}$ \emph{asymptotic}
\emph{vector fields} for $\widehat{\rho}$ and $\widetilde{\rho},$
respectively. These vector fields can be obtained by applying the
geodesic flow to a stable horocycle through a tangent vector to $\widehat{\rho}$
or $\widetilde{\rho}.$ Such horocycles are known to be $C^{\infty}$
\cite{Anosov}, because $(\widehat{F}(\epsilon),\widehat{h})$ can
be extended to a closed surface of negative curvature. It follows
that $\widehat{Z}$ and $\widetilde{Z}$ are $C^{\infty}$ vector
fields. In this paper, we will only use the fact that they are continuous.

\section{Proof of Total Insecurity of $(S,h)$}

Let $\mathcal{P}(w),$ $w\in T^{1}S,$ be the two-dimensional distribution
on $T^{1}S$ defined in (\ref{eq:Distribution}), and let $Q=Q_{w}:\mathcal{P}(w)\to\mathbb{R}$
be the Lyapunov function defined at the beginning of Section \ref{sec:LyapunovLineFields}.
Let $\mathcal{N},$ $\mathcal{N}_{0},$ $\mathcal{N}_{1},$ and $\mathcal{N}_{2}$
be as in Section \ref{sec:TubularNeighborhoods}.
\begin{defn}
\label{def:LyapunovLength}If a $C^{1}$ regular curve $\sigma(s)$,
$a_{1}\le s\le a_{2},$ $a_{1}<a_{2},$ in $T^{1}S$ is everywhere
tangent to the distribution $\mathcal{P}$, then we define the \emph{Lyapunov
length} of $\sigma$ as $\mathcal{L}_{Q}(\sigma)=\int_{a_{1}}^{a_{2}}|Q(\sigma'(s)|\ ds$.
We let $\mathcal{L}(\sigma)=\int_{a_{1}}^{a_{2}}||\sigma'(s)||\ ds$
be the usual length of $\sigma$. \end{defn}
\begin{rem}
\label{rem:LyapunovLengthBound}The Lyapunov length, like the usual
length, is independent of the parametrization of $\sigma,$ because
$Q$ is homogeneous of degree 1. If $\sigma$ is everywhere tangent
to $\mathcal{P}$, and for each tangent vector $w=\sigma'(s)$ of
$\sigma$ we have $\mathcal{K}_{w}^{u}=\mathcal{K}_{w}^{+}$ (in particular
if tr$(\sigma)\subset\mathcal{N}_{0}),$ then by (\ref{eq:QUpperBound}),
we know that $\mathcal{L}_{Q}(\sigma)\le\mathcal{L}(\sigma).$ If
$\sigma$ is as in Definition \ref{def:LyapunovLength} and $\sigma(s)\in(T^{1}S)\setminus(\cup_{i=1}^{q}\mathcal{Z}_{i})$
for all $s\in[a_{1},a_{2}],$ then it follows from (\ref{eq:QUpperBoundOnZComplement})
that $\mathcal{L}_{Q}(\sigma)<\infty.$
\end{rem}
Lemma \ref{lem:StableCurveContraction} shows that the length of certain
stable or approximately stable curves goes to zero under the application
of $d\varphi^{t}$ as $t$ goes to infinity along certain sequences.
The uniformity of the contraction in part (1) of Lemma \ref{lem:StableCurveContraction}
allows us to obtain expansion of approximately unstable curves in
Corollary \ref{cor:UnstableCurveExpansion}.
\begin{lem}
\label{lem:StableCurveContraction} Suppose $\sigma(s)$, $s\in[0,a]$,
$a>0,$ is a $C^{1}$ regular curve in $(T^{1}S)\setminus$ $(\cup_{i=1}^{q}T^{1}(\partial\mathcal{C}_{i}))$
that is everywhere tangent to the distribution $\mathcal{P}.$ Let
$v_{0}=\sigma(s_{0}),$ for some $s_{0}\in[0,a]$. The two statements
below are true for $\sigma.$
\begin{enumerate}
\item For every $\epsilon>0,$ there exists $M=M(\epsilon)>1,$ independent
of the choice of $\sigma,$ such that the following holds: If $m\ge M$
is an integer and there exists a finite sequence $(t_{n})_{n=1,2,\dots,m+1}$
such that $t_{1}\ge0,$ $t_{n+1}\ge1+t_{n}$ for $n=1,2,\dots,m,$
tr$(\varphi^{t_{n}}(\sigma))\subset T^{1}\mathcal{N}_{1},$ for $n=1,2,\dots,m,$
and $(\varphi^{t_{m+1}}(\sigma))$ is an approximately stable curve,
then $\mathcal{L}(\varphi^{t_{m}}(\sigma))<\epsilon\mathcal{L}(\varphi^{t_{1}}(\sigma))$.
\item If $\sigma$ is a stable curve with $\mathcal{L}_{Q}(\sigma)<\infty,$
and there exists a sequence $(t_{n})_{n=1,2,\dots}$ such that $t_{1}\ge0$,
$t_{n+1}\ge1+t_{n}$ for $n=1,2,\dots,$ and $\varphi^{t_{n}}(v_{0})\in T^{1}\mathcal{N}_{2}$
for $n=1,2,\dots,$ then $\lim_{n\to\infty}\mathcal{L}(\varphi^{t_{n}}(\sigma))=0.$
\end{enumerate}
\end{lem}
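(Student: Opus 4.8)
The plan is to exploit a single mechanism: along any curve whose tangent vectors lie in the stable cones, the Lyapunov length $\mathcal{L}_Q$ is nonincreasing under the forward geodesic flow by \eqref{eq:TauStable}, and it is strictly contracted, by a factor $\tau_0^{-1}$ with $\tau_0>1$ \emph{independent of the curve}, over each stretch of time of length at least $1$ during which the orbit lies in $T^1\mathcal{N}_1$. I would set this up with two preliminary facts. First, in the situation of part (1), since $\varphi^{t_{m+1}}(\sigma)$ is approximately stable, the $\mathcal{K}^s$-invariance \eqref{eq:StableConeInvariance} applied backward in time shows $(\varphi^t\sigma)'(s)\in\mathcal{K}^s_{\varphi^t\sigma(s)}$ for every $s$ and every $t\le t_{m+1}$. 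Second, a compactness argument on $\overline{T^1\mathcal{N}_1}$ in the spirit of \eqref{eq:Tau0Factor} produces $\tau_0>1$ with $\tau(w,1)\ge\tau_0$ for all $w\in T^1\mathcal{N}_1$: an orbit starting in $\mathcal{N}_1$ stays in the open negative-curvature set $\mathcal{N}_0$ for a uniform initial time, so \eqref{eq:StrictUnstableInvariance} holds over $[0,1]$, forcing all entries of $\mathcal{A}(w,1)$ to be strictly—and, by compactness, uniformly—positive, whence $\tau(w,1)=(2b(w,1)c(w,1)+1)^{1/2}$ is bounded away from $1$. Since $|Q(d\varphi^t_w\xi)|$ is nonincreasing in $t\ge0$ whenever $d\varphi^t\xi$ stays in $\mathcal{K}^s$, the bound at $t=1$ upgrades to $|Q(d\varphi^t_w\xi)|\le\tau_0^{-1}|Q(\xi)|$ for all $t\ge1$ and all such $\xi$.

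For part (1), I would apply this at each step $w=\varphi^{t_n}\sigma(s)\in T^1\mathcal{N}_1$, $t=t_{n+1}-t_n\ge1$, $n=1,\dots,m-1$, to get $|Q((\varphi^{t_m}\sigma)'(s))|\le\tau_0^{-(m-1)}|Q((\varphi^{t_1}\sigma)'(s))|$ pointwise, hence $\mathcal{L}_Q(\varphi^{t_m}\sigma)\le\tau_0^{-(m-1)}\mathcal{L}_Q(\varphi^{t_1}\sigma)$, and then convert to ordinary length at the two ends. At $t_1$, $\mathrm{tr}(\varphi^{t_1}\sigma)\subset T^1\mathcal{N}_1\subset T^1\mathcal{N}_0$, so $\mathcal{K}^u=\mathcal{K}^+$ there and \eqref{eq:QUpperBound} gives $\mathcal{L}_Q(\varphi^{t_1}\sigma)\le\mathcal{L}(\varphi^{t_1}\sigma)$. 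At $t_m$, since the family $t\mapsto d\varphi^{-t}\mathcal{K}^s_{\varphi^t v}$ is nested decreasing and $t_{m+1}-t_m\ge1$, the vector $(\varphi^{t_m}\sigma)'(s)=d\varphi^{-(t_{m+1}-t_m)}\big((\varphi^{t_{m+1}}\sigma)'(s)\big)$ lies in $d\varphi^{-\beta_1}\mathcal{K}^s_{\varphi^{\beta_1}(\varphi^{t_m}\sigma(s))}$, where $\beta_1\in(0,1]$ is a uniform time for which the orbit from a point of $T^1\mathcal{N}_1$ remains in $T^1\mathcal{N}_0$; by \eqref{eq:StrictStableInvariance} and compactness of $\overline{T^1\mathcal{N}_1}$ this set is a sub-cone of $\mathcal{K}^-$ with boundary slopes in $[-1/\delta,-\delta]$ for a uniform $\delta>0$, so \eqref{eq:QLowerBound} gives $\mathcal{L}(\varphi^{t_m}\sigma)\le C(\delta)^{-1}\mathcal{L}_Q(\varphi^{t_m}\sigma)$. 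Combining, $\mathcal{L}(\varphi^{t_m}\sigma)\le C(\delta)^{-1}\tau_0^{-(m-1)}\mathcal{L}(\varphi^{t_1}\sigma)$, and I take $M$ so that $C(\delta)^{-1}\tau_0^{-(M-1)}<\epsilon$; every constant here is independent of $\sigma$.

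For part (2), $\sigma$ is now a genuine stable curve with $\mathcal{L}_Q(\sigma)<\infty$, so $t\mapsto\mathcal{L}_Q(\varphi^t\sigma)$ is nonincreasing and nonnegative. For each $n$ let $\widetilde{J}_n$ be the connected component of $s_0$ in $\{s:\varphi^{t_n}\sigma(s)\in T^1\mathcal{N}_1\}$. Because $\varphi^{t_n}(v_0)\in T^1\mathcal{N}_2$, for $s\in\widetilde{J}_n$ the orbit of $\varphi^{t_n}\sigma(s)$ stays in $T^1\mathcal{N}_0$ over $[t_n,t_n+\beta_1]$, so on $\widetilde{J}_n$ the integrand $|Q(\cdot)|$ of $\mathcal{L}_Q$ is contracted pointwise by a uniform $\nu<1$ over that interval (strict stable invariance plus compactness), giving $\mathcal{L}_Q(\varphi^{t_{n+1}}\sigma)\le\mathcal{L}_Q(\varphi^{t_n}\sigma)-(1-\nu)\,\mathcal{L}_Q(\varphi^{t_n}\sigma|\widetilde{J}_n)$; telescoping yields $\sum_n\mathcal{L}_Q(\varphi^{t_n}\sigma|\widetilde{J}_n)<\infty$, so $\mathcal{L}_Q(\varphi^{t_n}\sigma|\widetilde{J}_n)\to0$. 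On the compact set $T^1\overline{\mathcal{N}_0}$, which is disjoint from $\cup_i T^1(\partial\mathcal{C}_i)$, the unstable line field is $\mathcal{K}^+$-valued, $E^s$ is continuous (Lemma \ref{lem:ContinuousLineFields}) and contained in $\interior\mathcal{K}^-$ (Lemma \ref{lem:LineField}), so the Lyapunov and ordinary lengths of a stable curve there are uniformly comparable. Hence if $\widetilde{J}_n\ne[0,a]$ then $\varphi^{t_n}\sigma|\widetilde{J}_n$ runs from a point of $T^1\mathcal{N}_2$ out to $T^1(\partial\mathcal{N}_1)$ and so has ordinary—hence Lyapunov—length bounded below by a positive constant, contradicting $\mathcal{L}_Q(\varphi^{t_n}\sigma|\widetilde{J}_n)\to0$; thus $\widetilde{J}_n=[0,a]$ for all large $n$. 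For those $n$, $\mathrm{tr}(\varphi^{t_n}\sigma)\subset T^1\mathcal{N}_1$, so $\mathcal{L}_Q(\varphi^{t_n}\sigma)\to0$ and $\mathcal{L}(\varphi^{t_n}\sigma)\le\mathrm{const}\cdot\mathcal{L}_Q(\varphi^{t_n}\sigma)\to0$.

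The step I expect to require the most care is the uniformity: that $\tau_0$ and the comparison constants are genuinely independent of $\sigma$, and—in part (2)—that a contraction detected only along the single orbit of $v_0$ forces the whole curve to shrink. Both are handled by the same device: compactness of $T^1\overline{\mathcal{N}_0}$ together with the dichotomy that a stable sub-curve through a point of $T^1\mathcal{N}_2$ which is not all of $\sigma$ must travel a definite distance in order to leave $T^1\mathcal{N}_0$, which converts local Lyapunov contraction into honest length contraction of the entire curve.
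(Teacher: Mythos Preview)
Your argument for part (1) is essentially the paper's: both set up a uniform contraction factor for $|Q|$ over each step $[t_n,t_{n+1}]$ using strict stable-cone invariance on a compact subset of $T^1\mathcal{N}_0$, then convert between $\mathcal{L}$ and $\mathcal{L}_Q$ at $t_1$ via \eqref{eq:QUpperBound} and at $t_m$ via \eqref{eq:QLowerBound}. The only cosmetic difference is that the paper fixes a small $\eta\in(0,1)$ with $\varphi^{[0,\eta]}(\overline{T^1\mathcal{N}_1})\subset T^1\mathcal{N}_0$ and obtains its contraction constant $\kappa$ over $[0,\eta]$, then upgrades to $\beta\ge1$ by monotonicity; you phrase the same thing with your $\beta_1$. (Using $t=1$ directly would force you to worry about the basis $U_1(\varphi^1w),U_2(\varphi^1w)$ at a point where the unstable cone field may fail to be continuous; sticking with $\beta_1$ avoids this, and you effectively do so.)

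For part (2) your route is genuinely different from the paper's. The paper subdivides $\sigma$ into finitely many pieces $\sigma|[s_\ell,s_{\ell+1}]$ each of Lyapunov length $<\widetilde{\epsilon}/2$ at time $t_1$, and proves by induction on $\ell$ that $\mathcal{L}_Q(\varphi^{t_n}(\sigma|[0,s_\ell]))\to0$: once the initial segment has shrunk below $\widetilde{\epsilon}/2$, adjoining the next piece keeps the total below $\widetilde{\epsilon}$, forcing the whole segment into $T^1\mathcal{N}_1$ so that part (1) applies again. You instead run a telescoping energy argument: the drop $\mathcal{L}_Q(\varphi^{t_n}\sigma)-\mathcal{L}_Q(\varphi^{t_{n+1}}\sigma)$ dominates a fixed fraction of $\mathcal{L}_Q(\varphi^{t_n}\sigma|\widetilde{J}_n)$, so the latter is summable and hence $\to0$; then the dichotomy ``either $\widetilde{J}_n=[0,a]$ or $\varphi^{t_n}\sigma|\widetilde{J}_n$ crosses from $T^1\mathcal{N}_2$ to $T^1\partial\mathcal{N}_1$ and has Lyapunov length bounded below'' forces $\widetilde{J}_n=[0,a]$ eventually. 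Both arguments are correct. Yours avoids the induction and the somewhat artificial choice of $\widetilde{\epsilon}$, at the price of needing the uniform comparability of $\mathcal{L}$ and $\mathcal{L}_Q$ for stable curves in $T^1\overline{\mathcal{N}_1}$ (which you justify via Lemma~\ref{lem:LineField}, Lemma~\ref{lem:ContinuousLineFields}, and compactness). The paper's subdivision makes the ``local contraction implies global contraction'' step more explicit and reuses part (1) verbatim.
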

\begin{proof}
Since $\overline{\mathcal{N}_{1}}\subset\mathcal{N}_{0},$ there exists
$\eta\in(0,1)$ such that for every $v\in\overline{T^{1}\mathcal{N}_{1}},$
$\gamma_{v}([0,\eta])$ lies in $\mathcal{N}_{0}.$ By (\ref{eq:StrictStableInvariance})
and a compactness argument, there exists $\delta\in(0,1)$ such that
for $v\in\overline{T^{1}\mathcal{N}_{1}},$ $d\varphi^{-\eta}(\mathcal{K}_{\varphi^{\eta}v}^{s})$
$\subset\widetilde{\mathcal{K}}_{v}\subset\mathcal{K}_{v}^{-},$ where
$\widetilde{\mathcal{K}}_{v}$ is bounded by lines of slopes $-\delta$
and $-1/\delta$ in the $H,V$ coordinate system on $\mathcal{P}(v).$
If $\beta\ge1>\eta,$ then $d\varphi^{-\beta}(\mathcal{K}_{\varphi^{\beta}v}^{s})\subset d\varphi^{-\eta}(\mathcal{K}_{\varphi^{\eta}v}^{s}).$
Thus, by (\ref{eq:QLowerBound}) and (\ref{eq:QUpperBound}) there
exists a constant $C>0$ such that if $\xi\in d\varphi^{-\beta}(\mathcal{K}_{\varphi^{\beta}v}^{s})$
for some $\beta\ge1$ and some $v\in\overline{T^{1}\mathcal{N}_{1}},$
then \begin{equation}
C||\xi||\le|Q(\xi)|\le||\xi||.\label{eq:XiLengthEstimate}\end{equation}

By (\ref{eq:StrictUnstableInvariance}) and (\ref{eq:TauStable}),
there exists $\kappa\in(0,1)$ such that if $\xi\in d\varphi^{-\eta}(\mathcal{K}_{\varphi^{\eta}v}^{s})$
for some $v\in\overline{T^{1}\mathcal{N}_{1}},$ then\[
|Q(d\varphi^{\eta}\xi)|\le\kappa|Q(\xi)|.\]
 If $\beta\ge1$ and $\xi\in d\varphi^{-\beta}(\mathcal{K}_{\varphi^{\beta}v}^{s})$
for some $v\in\overline{T^{1}\mathcal{N}_{1}},$ then $|Q(d\varphi^{\beta}\xi)|\le|Q(d\varphi^{\eta}\xi)|$
and we obtain \[
|Q(d\varphi^{\beta}\xi)|\le\kappa|Q(\xi)|.\]
Thus, if $\widetilde{\sigma}$ is a stable curve in $\overline{T^{1}\mathcal{N}_{1}},$
or more generally, if $\widetilde{\sigma}$ is in $\overline{T^{1}\mathcal{N}_{1}}$
and $\varphi^{\beta}\widetilde{\sigma}$ is an approximately stable
curve for some $\beta\ge1,$ then \begin{equation}
C\mathcal{L}(\widetilde{\sigma})\le\mathcal{L}_{Q}(\widetilde{\sigma})\le\mathcal{L}(\widetilde{\sigma})\label{eq:QLengthComparison}\end{equation}
 and \begin{equation}
\mathcal{L}_{Q}(\varphi^{\beta}(\widetilde{\sigma}))\le\kappa\mathcal{L}_{Q}(\widetilde{\sigma}).\label{eq:QContraction}\end{equation}

Let $\epsilon>0$ and take $M>1$ sufficiently large so that $\kappa^{M-1}<C\epsilon.$
Suppose $m\ge M$ is an integer and $(t_{n})_{n=1,\dots,m+1}$ and
$\sigma$ are as in the hypothesis of part (1). Since $\varphi^{t_{m+1}}(\sigma)$
is approximately stable and tr$(\varphi^{t_{m}}(\sigma))\subset T^{1}\mathcal{N}_{1},$
it follows from (\ref{eq:QLengthComparison}) that $C\mathcal{L}(\varphi^{t_{m}}(\sigma))\le\mathcal{L}_{Q}(\varphi^{t_{m}}(\sigma)).$
Note that (\ref{eq:StableConeInvariance}) implies that $\varphi^{t_{m}-t}(\sigma)$
is approximately stable for $t\ge0.$ Applying (\ref{eq:QContraction})
to $\widetilde{\sigma}=\varphi^{t_{1}}(\sigma),\dots,\varphi^{t_{m-1}}(\sigma),$
and $\beta=t_{2}-t_{1},\dots,t_{m}-t_{m-1},$ respectively, we obtain
$\mathcal{L}_{Q}(\varphi^{t_{m}}(\sigma))\le\kappa^{m-1}\mathcal{L}_{Q}(\varphi^{t_{1}}(\sigma)).$
Since $\varphi^{t_{2}}(\sigma)$ is approximately stable, (\ref{eq:QLengthComparison})
implies that $\mathcal{L}_{Q}(\varphi^{t_{1}}(\sigma))\le\mathcal{L}(\varphi^{t_{1}}(\sigma)).$
To summarize, we have \[
C\mathcal{L}(\varphi^{t_{m}}(\sigma))\le\mathcal{L}_{Q}(\varphi^{t_{m}}(\sigma))\le\kappa^{m-1}\mathcal{L}_{Q}(\varphi^{t_{1}}(\sigma))\le\kappa^{m-1}\mathcal{L}(\varphi^{t_{1}}(\sigma))<C\epsilon\mathcal{L}(\varphi^{t_{1}}(\sigma)),\]
 and this completes the proof of part (1).

It follows from (\ref{eq:QLengthComparison}) that there exists $\widetilde{\epsilon}>0$
such that any stable curve $\widetilde{\sigma}$ that contains a vector
in $T^{1}\mathcal{N}_{2}$ and has $\mathcal{L}_{Q}(\widetilde{\sigma})\le\widetilde{\epsilon}$
must be contained in $T^{1}\mathcal{N}_{1}.$ Suppose now that $\sigma$
and the sequence $(t_{n})_{n=1,2,\dots}$ are as in the hypothesis
of part (2). If $\mathcal{L}_{Q}(\varphi^{t_{n}}(\sigma))\le\widetilde{\epsilon}$
for some $n\ge1,$ then $\varphi^{t_{n}}(\sigma)\subset T^{1}\mathcal{N}_{1}$
and $\mathcal{L}(\varphi^{t_{n}}(\sigma))\le(1/C)\mathcal{L}_{Q}(\varphi^{t_{n}}(\sigma)).$
Therefore to prove part (2), it suffices to show that $\lim_{n\to\infty}$
$\mathcal{L}_{Q}(\varphi^{t_{n}}(\sigma))=0.$

We may assume that $v_{0}$ is an endpoint of $\sigma,$ say $v_{0}=\sigma(0).$
Let $0=s_{0}<s_{1}<\cdots<s_{k}=a$ be chosen so that $\mathcal{L}_{Q}(\varphi^{t_{1}}(\sigma|[s_{\ell},s_{\ell+1}]))<\widetilde{\epsilon}/2$
for $\ell=0,\dots,k-1.$ Let $\sigma_{\ell}=\sigma|[0,s_{\ell}]$
for $\ell=1,\dots,k.$ We will use induction on $\ell$ to show that
$\lim_{n\to\infty}\mathcal{L}_{Q}(\varphi^{t_{n}}(\sigma_{\ell}))$
$=0$ for $l=1,\dots,k.$ Since $\sigma_{k}=\sigma,$ the conclusion
of part (2) will follow.

Since $\mathcal{L}_{Q}(\varphi^{t}(\tilde{\sigma}))$ is a nonincreasing
function of $t$ for any stable curve $\tilde{\sigma},$ we have $\mathcal{L}_{Q}(\varphi^{t_{n}}(\sigma_{1}))\le\mathcal{L}_{Q}(\varphi^{t_{1}}(\sigma_{1}))<\widetilde{\epsilon}/2$
for all $n\ge1.$ Thus tr$(\varphi^{t_{n}}(\sigma_{1}))\subset T^{1}\mathcal{N}_{1}$
for all $n\ge1.$ Therefore part (1) implies that $\lim_{n\to\infty}\mathcal{L}(\varphi^{t_{n}}(\sigma_{1}))=0.$
Since $\mathcal{L}_{Q}(\varphi^{t_{n}}(\sigma_{1}))\le\mathcal{L}(\varphi^{t_{n}}(\sigma_{1})),$
we have $\lim_{n\to\infty}\mathcal{L}_{Q}(\varphi^{t_{n}}(\sigma_{1}))=0.$

Now suppose that $\ell\in\{1,\dots,k-1\}$ and assume that $\lim_{n\to\infty}\mathcal{L}_{Q}(\varphi^{t_{n}}(\sigma_{\ell}))=0.$
Let $M_{\ell}$ be sufficiently large so that $\mathcal{L}_{Q}(\varphi^{t_{n}}(\sigma_{\ell}))<\widetilde{\epsilon}/2$
for $n\ge M_{\ell}.$ Since $\mathcal{L}_{Q}(\varphi^{t_{n}}(\sigma|[s_{\ell},s_{\ell+1}]))\le\mathcal{L}_{Q}(\varphi^{t_{1}}(\sigma|[s_{\ell},s_{\ell+1}]))<\widetilde{\epsilon}/2$
for all $n\ge1,$ it follows that $\mathcal{L}_{Q}(\varphi^{t_{n}}(\sigma_{\ell+1}))$
$<\widetilde{\epsilon}$ for $n\ge M_{\ell}.$ Then tr$(\varphi^{t_{n}}(\sigma_{\ell+1}))\subset T^{1}\mathcal{N}_{1}$
for $n\ge M_{\ell}.$ By applying part (1), we obtain $\lim_{n\to\infty}\mathcal{L}_{Q}(\varphi^{t_{n}}(\sigma_{\ell+1}))=0,$
which completes the inductive proof.$ $\end{proof}
\begin{cor}
\label{cor:UnstableCurveExpansion}There exists $r_{0}>0$ such that
if $\sigma(s),$ $s\in[0,a],$ $a>0,$ is an approximately unstable
curve and there exist $v_{0}=\sigma(s_{0})$ for some $s_{0}\in[0,a]$
and a sequence $(t_{n})_{n=1,2,\dots}$ with $t_{1}\ge1,$ $t_{n+1}\ge1+t_{n}$
for $n=1,2,\dots,$ and $\varphi^{t_{n}}(v_{0})\in T^{1}\mathcal{N}_{2}$
for $n=1,2,\dots,$ then $\liminf_{n\to\infty}\mathcal{L}(\varphi^{t_{n}}(\sigma))>r_{0}.$\end{cor}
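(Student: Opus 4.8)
The plan is to derive Corollary \ref{cor:UnstableCurveExpansion} from part (1) of Lemma \ref{lem:StableCurveContraction} by a contradiction/compactness argument, using the fact that $d\varphi^t$ is area-preserving on the distribution $\mathcal{P}$ together with the transversality of stable and unstable cones (Lemma \ref{lem:DisjointCones}). First I would fix $\epsilon>0$ small enough that the stable and unstable cones over $\overline{T^1\mathcal{N}_1}$ are uniformly transverse and that there is a uniform lower bound, say $c_0>0$, relating the length of a short approximately unstable curve over $T^1\mathcal{N}_1$ to the area it sweeps together with any short approximately stable curve it crosses; then I would let $M=M(\epsilon)$ be the constant furnished by part (1) of the lemma. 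The candidate value is $r_0 = r_0(\epsilon, M)$, determined below.

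The main step is the following dichotomy. Suppose, for contradiction, that $\liminf_{n\to\infty}\mathcal{L}(\varphi^{t_n}(\sigma)) \le r_0$ for some approximately unstable $\sigma$ and some sequence $(t_n)$ as in the hypothesis. Passing to a subsequence we may assume $\mathcal{L}(\varphi^{t_n}(\sigma)) \le r_0$ for all $n$. For each fixed large $n$, consider the curve $\varphi^{t_n}(\sigma)$: its basepoint-component $\varphi^{t_n}(v_0)$ lies in $T^1\mathcal{N}_2$, and if $r_0$ is small the whole curve lies in $T^1\mathcal{N}_1$. Now flow backward: for $j<n$, $\varphi^{t_j}(\sigma) = \varphi^{t_j - t_n}(\varphi^{t_n}(\sigma))$, and since the unstable cone field is $d\varphi^t$-invariant for $t\ge 0$ (equivalently the stable field is invariant under $d\varphi^{t}$, $t\le 0$), running time backward from $t_n$ to $t_j$ expands approximately unstable curves. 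Here is where I invoke part (1): take any fixed short approximately \emph{stable} segment $\sigma^s$ attached to $\varphi^{t_n}(\sigma)$ at its basepoint (such a segment exists by integrating the stable line field, Lemma \ref{lem:ContinuousLineFields}); then $\varphi^{t_j - t_n}(\sigma^s)$ is again approximately stable, and the times $t_n, t_{n-1}, \dots, t_{n-M}$ — reversed and relabeled — form exactly the kind of increasing sequence spending time in $T^1\mathcal{N}_1$ (indeed in $T^1\mathcal{N}_2$) that part (1) requires, so $\mathcal{L}(\varphi^{t_{n-M}}(\sigma^s)) < \epsilon\, \mathcal{L}(\varphi^{t_{n-1}}(\sigma^s))$, hence $\mathcal{L}(\varphi^{t_{n-M}}(\sigma^s))$ is uniformly small. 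Combined with the area-preservation of $d\varphi^t$ on $\mathcal{P}$: the parallelogram (or quadrilateral) spanned by $\varphi^{t_j}(\sigma)$ and $\varphi^{t_j}(\sigma^s)$ has area independent of $j$, its two sides are uniformly transverse, the stable side $\varphi^{t_{n-M}}(\sigma^s)$ has been contracted to length $<$ (something small), so the unstable side $\varphi^{t_{n-M}}(\sigma)$ must have length bounded \emph{below} by a definite amount $r_1 = r_1(\epsilon,M,c_0)$ that does not depend on $n$. Choosing $r_0 < \epsilon^{M} r_1$ (so that going forward again from $t_{n-M}$ to $t_n$, which contracts in the unstable-reversed sense — i.e. the stable length would have grown, forcing the unstable length at time $t_n$ to be at least a fixed multiple of its value at $t_{n-M}$) contradicts $\mathcal{L}(\varphi^{t_n}(\sigma)) \le r_0$. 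Unwinding the inequalities pins down the admissible $r_0$.

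The hard part will be bookkeeping the direction of the flow correctly and making the area argument uniform: part (1) of Lemma \ref{lem:StableCurveContraction} is stated for the forward flow contracting stable curves along an \emph{increasing} sequence of times, and to get expansion of an unstable curve at a \emph{late} time $t_n$ I must reverse time, which turns an approximately unstable curve into (the reverse of) an approximately stable curve only after applying $d\varphi$ for negative time — so I must check that the stable-cone invariance (\ref{eq:StableConeInvariance}) genuinely lets me treat $\sigma^s$ flowed backward as "approximately stable" in the hypothesis of part (1) when I re-increase time. The uniformity of $M(\epsilon)$ in part (1) (it is independent of the curve) is exactly what is needed for the lower bound $r_1$, and hence $r_0$, to be uniform over all admissible $\sigma$; I would emphasize that point. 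Finally, I would note that the constant $r_0$ can be taken to depend only on the metric $h$ and the fixed regions $\mathcal{N}_0, \mathcal{N}_1, \mathcal{N}_2$, since all the constants $c_0, M, \epsilon$ above do.
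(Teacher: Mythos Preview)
Your proposal has a genuine gap, and it overlooks a much simpler route.

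\textbf{The area argument is not well posed.} Area preservation of $d\varphi^t$ holds on the two-plane $\mathcal{P}(w)$ at each $w\in T^1S$; it is a statement about pairs of \emph{tangent vectors}, not about pairs of \emph{curves} in $T^1S$. There is no invariant ``parallelogram spanned by $\varphi^{t_j}(\sigma)$ and $\varphi^{t_j}(\sigma^s)$'' whose area is preserved. At best you could argue pointwise at $v_0=\sigma(s_0)$: pick $\eta\in E^s_{v_0}$, use $|d\varphi^{t_n}(\sigma'(s_0))\wedge d\varphi^{t_n}(\eta)|=|\sigma'(s_0)\wedge\eta|$ together with uniform transversality to get a lower bound on $|d\varphi^{t_n}(\sigma'(s_0))|$. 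But that still does not give a lower bound on $\mathcal{L}(\varphi^{t_n}(\sigma))=\int_0^a|d\varphi^{t_n}(\sigma'(s))|\,ds$, since the hypothesis only controls the orbit through $v_0$, not through every $\sigma(s)$. Your claimed $r_1$ would in any case depend on the initial ``area,'' hence on the particular $\sigma$.

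\textbf{The direction-of-flow bookkeeping is inverted.} You attach a stable segment $\sigma^s$ at the late time $t_n$ and assert that at the earlier time $t_{n-M}$ it has become \emph{short}. But stable curves contract under \emph{forward} flow and expand under backward flow; going from $t_n$ back to $t_{n-M}$ is backward time, so $\varphi^{t_{n-M}-t_n}(\sigma^s)$ is \emph{longer} than $\sigma^s$, not shorter. Part~(1) of Lemma~\ref{lem:StableCurveContraction} cannot be invoked the way you describe.

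\textbf{What the paper actually does.} The paper bypasses all of this via the time-reversal symmetry of the geodesic flow. Define $\alpha_{\mathrm{rev}}$ by replacing each $\alpha(s)\in T^1_pS$ with $-\alpha(s)$. Then $\varphi^t\big((\varphi^t\alpha)_{\mathrm{rev}}\big)=\alpha_{\mathrm{rev}}$, and on $T^1\mathcal{N}_1$ (where $\mathcal{K}^u=\mathcal{K}^+$ and $\mathcal{K}^s=\mathcal{K}^-$) a curve is approximately unstable if and only if its rev is approximately stable. Set $r_0=\min\!\big(\tfrac12,\tfrac12\operatorname{dist}(T^1\mathcal{N}_2,\partial(T^1\mathcal{N}_1))\big)$; without loss all $\varphi^{t_n}(\sigma)$ lie in $T^1\mathcal{N}_1$. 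If $\mathcal{L}(\varphi^{t_m}(\sigma))<2r_0$ for some large $m$, put $\widetilde{\sigma}=(\varphi^{t_m}(\sigma))_{\mathrm{rev}}$ and apply part~(1) of Lemma~\ref{lem:StableCurveContraction} to $\widetilde{\sigma}$ with the increasing times $0,\,t_m-t_{m-1},\dots,t_m-t_1,\,t_m$, choosing $\epsilon=\mathcal{L}(\varphi^{t_1}(\sigma))$ and $M=M(\epsilon)$. Since $\varphi^{t_m}(\widetilde{\sigma})=\sigma_{\mathrm{rev}}$ is approximately stable, part~(1) yields $\mathcal{L}(\varphi^{t_1}(\sigma))=\mathcal{L}(\varphi^{t_m-t_1}(\widetilde{\sigma}))<\epsilon\,\mathcal{L}(\widetilde{\sigma})<\epsilon$, a contradiction. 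You brushed against this idea in your ``hard part'' paragraph (``turns an approximately unstable curve into the reverse of an approximately stable curve''), but that \emph{is} the whole proof, not a technicality to be managed alongside an area argument.
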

\begin{proof}
Let $b=\text{dist}(T^{1}\mathcal{N}_{2},\partial(T^{1}\mathcal{N}_{1}))>0$
and let $r_{0}=\min(1/2,b/2).$ Let $\sigma(s),$ $s\in[0,a],$ $a>0,$
be an approximately unstable curve and let $v_{0}$ and $(t_{n})_{n=1,2,\dots}$
be as in the hypothesis of the corollary. If tr$(\varphi^{t_{n}}(\sigma))$
is not contained in $T^{1}\mathcal{N}_{1},$ then $\mathcal{L}(\varphi^{t_{n}}(\sigma))\ge b>r_{0}.$
Thus, we may assume that tr$(\varphi^{t_{n}}(\sigma))\subset T^{1}\mathcal{N}_{1}$
for $n=1,2,\dots.$

If $p(s)$ is a curve in $S$ and $\alpha(s)$ is a curve in $T^{1}S$
such that $\alpha(s)\in T_{p(s)}^{1}S,$ then we let $\alpha_{\text{rev }}$
denote the curve in $T^{1}S$ with the same basepoints as $\alpha,$
but the vectors $\alpha(s)\in T_{p(s)}^{1}S$ are replaced by $-\alpha(s)\in T_{p(s)}^{1}S.$
Since $\mathcal{K}_{v}^{u}=\mathcal{K}_{v}^{+}$ and $\mathcal{K}_{v}^{s}=\mathcal{K}_{v}^{-}$
for $v\in T^{1}\mathcal{N}_{1},$ a $C^{1}$ curve $\alpha$ in $T^{1}\mathcal{N}_{1}$
is approximately stable {[}unstable{]} if and only if $\alpha_{\text{rev }}$
is approximately unstable {[}stable{]}. Note that $\varphi^{t}((\varphi^{t}(\alpha))_{\text{rev }})=\alpha_{\text{rev}}$
for all $t\in\mathbb{R}.$

Since $\sigma$ is approximately unstable, so is $\varphi^{t_{n}}(\sigma),$
for $n=1,2,\dots.$ Let $\epsilon=\mathcal{L}(\varphi^{t_{1}}(\sigma))>0,$
and let $M=M(\epsilon)$ be as in part (1) of Lemma \ref{lem:StableCurveContraction}.
Suppose there exists $m\ge M$ such that $\mathcal{L}(\varphi^{t_{m}}(\sigma))<2r_{0}\le1.$
Let $\widetilde{\sigma}=(\varphi^{t_{m}}(\sigma))_{\text{rev}}$ and
let $t_{0}=0.$ Then for $0\le k\le m,$ we have $(\varphi^{t_{k}}(\sigma))_{\text{rev}}=\varphi^{t_{m}-t_{k}}(\widetilde{\sigma}).$
Applying part (1) of Lemma \ref{lem:StableCurveContraction} to $\widetilde{\sigma}$
(instead of $\sigma),$ using the sequence $0,$ $t_{m}-t_{m-1},$
$t_{m}-t_{m-2},\dots,t_{m}-t_{1},t_{m}$ (instead of $t_{1},t_{2},\dots,t_{m+1}),$
and noting that $\varphi^{t_{m}}(\widetilde{\sigma})=\sigma_{\text{rev}}$
is approximately stable, we obtain $\mathcal{L}((\varphi^{t_{1}}(\sigma))_{\text{rev}})=\mathcal{L}(\varphi^{t_{m}-t_{1}}(\widetilde{\sigma}))<\epsilon\mathcal{L}(\widetilde{\sigma}))<\epsilon,$
which is a contradiction, since $\mathcal{L}((\varphi^{t_{1}}(\sigma))_{\text{rev}})=\mathcal{L}(\varphi^{t_{1}}(\sigma))=\epsilon.$
Thus $\mathcal{L}(\varphi^{t_{m}}(\sigma))$ $\ge2r_{0}$ for all
$m\ge M.$
\end{proof}
Lemma \ref{lem:FermiCurvature} below provides a comparison between
the Euclidean curvature and the curvature within a manifold $(M,\overline{h})$
for a given curve $\alpha$ in a neighborhood of a geodesic $\gamma.$
In our application of this lemma, $\gamma$ will be replaced by $\widehat{\rho},$
and $\overline{h}$ will be replaced by $\widehat{h}.$ If $\alpha(t)$
is a regular $C^{2}$ curve in the Euclidean plane and $N(t)$ is
a unit normal field along $\alpha(t)$, then the \emph{signed curvature
of $\alpha$ with respect to $N$} is defined by \begin{equation}
\kappa(t)=\frac{-\alpha''(t)\cdot N(t)}{|\alpha'(t)|^{2}}\label{eq:EuclidCurvature}\end{equation}
The choice of sign is consistent with that in equation (\ref{eq:DefSignedCurvature}).

Suppose $a<b$ and $\gamma:[a,b]\to M$ is a geodesic in a complete
Riemannian surface $(M,\overline{h})$ such that $\gamma$ is either
one-to-one on $[a,b]$ or $\gamma$ is a closed geodesic that is one-to-one
on $[a,b).$ In the former case, let $I=[a,b],$ and in the latter
case, let $I=[a,b]/\sim,$ where $a\sim b.$ Let $(\tau_{1},\tau_{2})$
be Fermi coordinates along $\gamma,$ where $\tau_{1}$ is the parameter
along $\gamma$ and $\tau_{2}$ is the parameter along geodesics perpendicular
to $\gamma.$ Let $\epsilon_{1}>0$ be sufficiently small so that
points that are within distance at most $\epsilon_{1}$ from tr$(\gamma)$
have a unique representation in Fermi coordinates $(\tau_{1},\tau_{2})\in I\times[-\epsilon_{1},\epsilon_{1}].$
For $\epsilon$ with $0<\epsilon\le\epsilon_{1},$ define\begin{equation}
\overline{F}(\epsilon)=\{(\tau_{1},\tau_{2}):\tau_{1}\in I,-\epsilon\le\tau_{2}\le\epsilon\}.\label{eq:FOverline}\end{equation}

On $\overline{F}(\epsilon)$ we have the Riemannian metric induced
by $\overline{h}$ through the identification of points within distance
at most $\epsilon$ of tr$(\gamma)$ in $(M,\overline{h})$ and their
Fermi coordinates $(\tau_{1},\tau_{2}).$ We also have the Euclidean
metric $\langle\partial/\partial\tau_{i},\partial/\partial\tau_{j}\rangle=\delta_{ij},$
for $1\le i,j\le2$ on $\overline{F}(\epsilon).$ For a regular $C^{2}$
curve $\alpha(s),$ $-\ell\le s\le\ell,$ in $\overline{F}(\epsilon)$
we may compare the signed curvature of $\alpha$ in these two metrics.
We will use the subscripts $\overline{h}$ and $e$ to distinguish
inner products with respect to the metric $\overline{h}$ and the
Euclidean metric.
\begin{lem}
\label{lem:FermiCurvature}Suppose $\gamma:[a,b]\to M$ is a geodesic
on a complete Riemannian surface $(M,\overline{h})$ such that $\gamma$
is either one-to-one on $[a,b]$ or $\gamma$ is a closed geodesic
that is one-to-one on $[a,b).$ Suppose $I$, $\epsilon_{1},$ Fermi
coordinates $(\tau_{1},\tau_{2})$ along $\gamma,$ and $\overline{F}(\epsilon)$
for $0<\epsilon\le\epsilon_{1},$ are as defined above. Let $k_{0}>0$
and let $0<\zeta<1.$ Then there exists $\epsilon>0,$ depending only
on $\gamma,$ $\overline{h},$ $\epsilon_{1},$ $k_{0},$ and $\zeta,$
such that the following properties hold for any regular $C^{2}$ curve,
$\alpha(s)=(\tau_{1}(s),\tau_{2}(s))$, $-\ell\le s\le\ell,$ in $F(\epsilon).$ \end{lem}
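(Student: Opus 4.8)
The plan is to exploit the fact that, in the Fermi coordinates $(\tau_{1},\tau_{2})$ along $\gamma$, the metric $\overline{h}$ differs from the Euclidean metric on $\overline{F}(\epsilon)$ by an amount that tends to $0$ as $\epsilon\to 0$, uniformly along $\gamma$ by the compactness of $I$; the asserted comparison between the Euclidean signed curvature of $\alpha$ and its signed curvature in $(M,\overline{h})$ (and between the two notions of length) will then follow by taking $\epsilon$ small enough in terms of $k_{0}$ and $\zeta$. Concretely, I would first write $\overline{h}=d\tau_{2}^{2}+G(\tau_{1},\tau_{2})^{2}\,d\tau_{1}^{2}$ for a positive $C^{\infty}$ function $G$ on $I\times[-\epsilon_{1},\epsilon_{1}]$. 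Because $\tau_{2}\mapsto(\tau_{1},\tau_{2})$ is a unit-speed $\overline{h}$-geodesic meeting $\gamma$ orthogonally and $\gamma$ itself is a geodesic, one has $G(\tau_{1},0)\equiv 1$ and $(\partial G/\partial\tau_{2})(\tau_{1},0)\equiv 0$, and hence also $(\partial G/\partial\tau_{1})(\tau_{1},0)\equiv 0$. Compactness of $I$ and smoothness of $G$ then give a constant $A$, depending only on $\gamma$, $\overline{h}$, and $\epsilon_{1}$, such that $|G-1|$, $|\partial G/\partial\tau_{1}|$, and $|\partial G/\partial\tau_{2}|$ are all at most $A\epsilon$ on $\overline{F}(\epsilon)$ for every $\epsilon\in(0,\epsilon_{1}]$; consequently every Christoffel symbol of $\overline{h}$ in these coordinates (a rational expression in $G$ and its first partials, with $G$ bounded away from $0$) is $O(\epsilon)$ on $\overline{F}(\epsilon)$, with implied constant depending only on $\gamma$, $\overline{h}$, $\epsilon_{1}$.

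Next I would compare the signed geodesic curvature of $\alpha(s)=(\tau_{1}(s),\tau_{2}(s))$ in $(M,\overline{h})$, computed from $\kappa_{\overline{h}}=-\langle D\alpha'/ds,\,N_{\overline{h}}\rangle_{\overline{h}}/|\alpha'|_{\overline{h}}^{2}$, with the Euclidean expression (\ref{eq:EuclidCurvature}). The difference of numerators is a sum of terms each carrying a Christoffel-symbol factor, hence $O(\epsilon)$ times a quadratic expression in $\alpha'$; similarly $|\alpha'|_{\overline{h}}^{2}=|\alpha'|_{e}^{2}+(G^{2}-1)\dot\tau_{1}^{2}=(1+O(\epsilon))|\alpha'|_{e}^{2}$, and the $\overline{h}$-unit normal differs from the Euclidean unit normal by $O(\epsilon)$. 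After dividing by $|\alpha'|^{2}$ the quadratic $\alpha'$-dependence in the leading correction cancels, leaving $|\kappa_{\overline{h}}(s)-\kappa_{e}(s)|\le A'\epsilon$ and $(1-A'\epsilon)|\alpha'|_{e}\le|\alpha'|_{\overline{h}}\le(1+A'\epsilon)|\alpha'|_{e}$, with $A'$ again depending only on $\gamma$, $\overline{h}$, $\epsilon_{1}$, and these bounds uniform over all regular $C^{2}$ curves $\alpha$ in $\overline{F}(\epsilon)$ and all choices of unit normal. Finally I would set $\epsilon=\epsilon(\gamma,\overline{h},\epsilon_{1},k_{0},\zeta)$ small enough that $A'\epsilon$ is below the threshold demanded by the listed properties (something of the form $A'\epsilon<\zeta$ and $A'\epsilon<\zeta k_{0}$), which makes those properties immediate from the two displayed estimates.

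I expect the only real work to be the bookkeeping in the second step: expanding $\kappa_{\overline{h}}$ in the coordinates $(\tau_{1},\tau_{2})$, grouping the correction terms so that their degree-two homogeneity in $\alpha'$ is visibly cancelled by the denominator $|\alpha'|^{2}$, and verifying that every surviving error genuinely carries a factor of $\epsilon$ that is uniform in $\tau_{1}\in I$. This is routine but is where the estimate is earned. The one structural point not to miss is that it is precisely the vanishing of $G-1$, $\partial G/\partial\tau_{1}$, and $\partial G/\partial\tau_{2}$ on $\{\tau_{2}=0\}$ — a consequence of $\gamma$ being a geodesic and of $(\tau_{1},\tau_{2})$ being Fermi coordinates along it — that upgrades ``bounded Christoffel symbols'' to ``$O(\epsilon)$ Christoffel symbols''; without it the comparison would only give closeness on a fixed small tube rather than closeness that improves as $\epsilon\to 0$.
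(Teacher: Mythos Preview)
Your plan is essentially the paper's own: write the Fermi metric as $d\tau_2^2+G^2\,d\tau_1^2$, use $G(\tau_1,0)=1$ and $G_{\tau_2}(\tau_1,0)=0$ (equivalently $\overline{h}_{ij}=\delta_{ij}+O(\tau_2^2)$ and $\Gamma_{ij}^m=O(|\tau_2|)$), and compare the two curvature formulas. The paper carries this out by fixing the $\overline{h}$-arclength parametrization, writing $N_{\overline{h}}$ and $N_e$ explicitly, and finding
\[
k(s)=\bigl(\tau_1''\tau_2'-\tau_1'\tau_2''\bigr)\sqrt{\overline{h}_{11}}+O(|\tau_2|),\qquad
\kappa(s)=\frac{\tau_1''\tau_2'-\tau_1'\tau_2''}{\bigl((\tau_1')^2+(\tau_2')^2\bigr)^{3/2}},
\]
which immediately gives the ratio bound once $|k|\ge k_0$.

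One correction to your bookkeeping: the intermediate claim $|\kappa_{\overline{h}}(s)-\kappa_e(s)|\le A'\epsilon$, uniform over \emph{all} regular $C^2$ curves in $\overline{F}(\epsilon)$, is not true as stated. Not every correction term is degree-two homogeneous in $\alpha'$; the term $\langle\alpha'',\,N_{\overline{h}}\rangle_{\overline{h}}-\langle\alpha'',\,N_e\rangle_e$ carries an $\alpha''$, and after normalization it contributes a \emph{multiplicative} $(1+O(\epsilon))$ on the common factor $\tau_1''\tau_2'-\tau_1'\tau_2''$, not an additive $O(\epsilon)$. The honest estimate is therefore of the shape $\kappa=k\bigl(1+O(\epsilon)\bigr)+O(\epsilon)$, which is exactly what the paper's two displayed formulas give, and which yields $|\kappa/k-1|<\zeta$ once $\epsilon$ is small relative to $\zeta$ and $\zeta k_0$. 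So your final choice of $\epsilon$ is right, but the route to it passes through a mixed multiplicative/additive comparison rather than a purely additive one.
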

\begin{enumerate}
\item If $N_{\overline{h}}(s)$ and $N_{e}(s)$ are continuous unit normal
fields along $\alpha(s)$ for the metric $\overline{h}$ and the Euclidean
metric, respectively, then $N_{\overline{h}}(s)$ and $N_{e}(s)$
lie on the same side of the tangent line to $\alpha$ at $s$ when
$s=0$ (and therefore, by continuity, for all $s\in[-\ell,\ell])$
if and only if the same choice of sign replaces $\pm$ in (\ref{eq:Nh})
as in (\ref{eq:Ne}) below.
\item Suppose $N_{\overline{h}}(s)$ and $N_{e}(s)$ are as in part (1),
lying on the same side of the tangent lines to $\alpha.$ Let the
signed curvatures $k(s)$ and $\kappa(s),$ denote, respectively,
the signed curvature of $\alpha(s)$ with respect to $N_{\overline{h}}(s)$
on $(\overline{F}(\epsilon),\overline{h})$ (as defined in (\ref{eq:DefSignedCurvature})
) and the signed curvature of $\alpha(s)$ with respect $N_{e}(s)$
in the Euclidean metric on $\overline{F}(\epsilon)$ (as defined in
(\ref{eq:EuclidCurvature})). If $|k(s)|\ge k_{0}$ for all $s\in[-\ell,\ell],$
then \begin{equation}
1-\zeta<\frac{\kappa(s)}{k(s)}<1+\zeta,\text{\ for\ all\ }s\in[-\ell,\ell].\label{eq:CurvatureRatio}\end{equation}
\end{enumerate}
\begin{proof}
Since the curvatures (in either metric) and the unit normal fields
along $\alpha$ are invariant under reparametrization, we may assume
that $\alpha(s)$ is a unit speed curve in $(\overline{F}(\epsilon),\overline{h}).$
Note that in Fermi coordinates $(\tau_{1},\tau_{2}),$ if $\overline{h}_{ij}=<\partial/\partial\tau_{i},\partial/\partial\tau_{j}>_{\overline{h}}$
and $\Gamma_{ij}^{m}$ are the Christoffel symbols for $\overline{h},$
then \[
\overline{h}_{11}(\tau_{1},0)=1,~~\overline{h}_{12}(\tau_{1},\tau_{2})=0,~~\overline{h}_{22}(\tau_{1},\tau_{2})=1,\]
 \[
\frac{\partial\overline{h}_{11}}{\partial\tau_{2}}(\tau_{1},0)=0,~~\frac{\partial\overline{h}_{12}}{\partial\tau_{2}}(\tau_{1},\tau_{2})=0,~~\frac{\partial\overline{h}_{22}}{\partial\tau_{2}}(\tau_{1},\tau_{2})=0,\]
 \begin{equation}
\overline{h}_{ij}(\tau_{1},\tau_{2})=\delta_{ij}+o(|\tau_{2}|),\label{eq:hijEstimate}\end{equation}
and \begin{equation}
\Gamma_{ij}^{m}(\tau_{1},\tau_{2})=O(|\tau_{2}|),\label{eq:Christoffel}\end{equation}
for $i,j,m\in\{1,2\}.$ Here, and throughout this proof, $O(|\tau_{2}|)$
and $o(|\tau_{2}|)$ denote functions whose absolute values are bounded,
respectively, by constant multiples of $|\tau_{2}|$ and $\tau_{2}^{2},$
where the constants can be chosen independently of $\alpha$ and $\epsilon.$

Since \begin{equation}
\alpha'(s)=\tau_{1}'(t)\frac{\partial}{\partial\tau_{1}}+\tau_{2}'(t)\frac{\partial}{\partial\tau_{2}}\label{eq:AlphaPrime}\end{equation}
and $<\alpha'(s),\alpha'(s)>_{\overline{h}}=1$, we have \begin{equation}
\overline{h}_{11}(\tau_{1}')^{2}+(\tau_{2}')^{2}=1.\label{eq:AlphaLength}\end{equation}
It follows that $|\tau_{1}'|\leq(\overline{h}_{11})^{-1/2}$ and $|\tau_{2}'|\leq1.$
From (\ref{eq:hijEstimate}) and (\ref{eq:AlphaLength}), we obtain
\begin{equation}
(\tau_{1}')^{2}+(\tau_{2}')^{2}=1+o(|\tau_{2}|).\label{eq:CurveLengthEstimate}\end{equation}
Note that \begin{equation}
N_{\overline{h}}(s)=\pm\left[\frac{-\tau_{2}'(s)}{\sqrt{\overline{h}_{11}}}\frac{\partial}{\partial\tau_{1}}+\tau_{1}'(s)\sqrt{\overline{h}_{11}}\frac{\partial}{\partial\tau_{2}}\right],\label{eq:Nh}\end{equation}
 where the same choice of $\pm$ is made for all $s.$ For the Euclidean
normal vector, we have\begin{equation}
N_{e}(s)=\pm\left[\frac{-\tau_{2}'(s)}{\sqrt{(\tau_{1}'(s))^{2}+(\tau_{2}'(s))^{2}}}\frac{\partial}{\partial\tau_{1}}+\frac{\tau_{1}'(s)}{\sqrt{(\tau_{1}'(s))^{2}+(\tau_{2}'(s))^{2}}}\frac{\partial}{\partial\tau_{2}}\right],\label{eq:Ne}\end{equation}
also with the same choice of $\pm$ for all $s.$ If we make the same
choice of $\pm$ in (\ref{eq:Nh}) as in (\ref{eq:Ne}), then it follows
from (\ref{eq:AlphaLength}) and (\ref{eq:hijEstimate}) for $(i,j)=(1,1)$
that \begin{equation}
\measuredangle(N_{\overline{h}}(s),N_{e}(s))=o(|\tau_{2}|),\label{eq:NhNeComparison}\end{equation}
 where $\measuredangle$ denotes the angle measured in the Euclidean
coordinate system. In this case, if $\epsilon$ is sufficiently small
and $|\tau_{2}|<\epsilon,$ then $N_{\overline{h}}(s)$ and $N_{e}(s)$
lie on the same side of the tangent line to $\alpha$ at $s.$ Conversely,
if we had made the opposite choice of signs in (\ref{eq:Nh}) from
that in (\ref{eq:Ne}), then $N_{\overline{h}}(s)$ and $N_{e}(s)$
would lie on opposite sides of the tangent line to $\alpha$ at $s.$
The assertion in part (1) follows.

In the proof of part (2), we may assume that $\pm$ in both (\ref{eq:Nh})
and (\ref{eq:Ne}) is taken to be +. Taking the covariant derivative
$D/ds,$ with respect to the metric $\overline{h},$ of (\ref{eq:AlphaPrime}),
we obtain

\begin{eqnarray}
\frac{D}{ds}\alpha'(s) & = & \sum_{m=1}^{2}\left(\tau_{m}''(s)+\sum_{1\le i,j\le2}\Gamma_{ij}^{m}\tau_{i}'(s)\tau_{j}'(s)\right)\frac{\partial}{\partial\tau_{m}}\nonumber \\
 & = & \sum_{m=1}^{2}\left(\tau_{m}''(s)+O(|\tau_{2}|)\right)\frac{\partial}{\partial\tau_{m}}.\label{eq:Covariant}\end{eqnarray}
Then the signed curvature of $\alpha(s)$ with respect to $N_{\overline{h}}$
in the metric $\overline{h}$ is \begin{eqnarray}
k(s) & = & -\left\langle \frac{D}{ds}\alpha'(s),N_{\overline{h}}(s)\right\rangle _{\overline{h}}\nonumber \\
 & = & (\tau_{1}''\tau_{2}'-\tau_{1}'\tau_{2}'')\sqrt{\overline{h}_{11}}+O(|\tau_{2}|).\label{eq:CurvatureEstimate}\end{eqnarray}
Next we find the signed curvature $\kappa(s)$ of $\alpha(s)$ with
respect to $N_{e}$ in the Euclidean metric. By (\ref{eq:EuclidCurvature})
and (\ref{eq:Ne}), we have\begin{eqnarray}
\kappa(s) & = & \frac{-\langle\alpha''(s),N_{e}(s)\rangle_{e}}{||\alpha'(s)||_{e}^{2}}\nonumber \\
 & = & \frac{\tau_{1}''\tau_{2}'-\tau_{1}'\tau_{2}''}{\big((\tau_{1}')^{2}+(\tau_{2}')^{2}\big)^{3/2}}\label{eq:KappaEstimate}\end{eqnarray}
If $|k(s)|\ge k_{0}$ and $\epsilon$ is sufficiently small, then
$(\ref{eq:CurvatureRatio})$ follows from (\ref{eq:CurveLengthEstimate}),
(\ref{eq:CurvatureEstimate}), and (\ref{eq:KappaEstimate}).
\end{proof}
In Proposition \ref{pro:Asymptotic} below, we will consider the closure
of an $\epsilon_{0}$-tubular neighborhood of $\widehat{\rho,}$ $(\widehat{F}(\epsilon_{0}),\widehat{h}),$
as described in Section \ref{sec:TubularNeighborhoods}. Either component
of $\widehat{F}(\epsilon_{0})\setminus\text{tr}(\widehat{\rho})$
could serve as the region in which the second of the Fermi coordinates
$(\tau_{1},\tau_{2})$ is positive, depending on how the Fermi coordinates
are chosen. For any geodesic $\gamma:[a,b]\to S,$ we let $-\gamma$
denote the geodesic $\gamma$ transversed in the opposite direction:
$(-\gamma)(t)=\gamma(a+b-t),$ for $a\le t\le b.$
\begin{prop}
\label{pro:Asymptotic}If $\rho:[0,L]\to S$ is the closed geodesic
constructed in Section \ref{sec:Construction-of-Metrics} and $x\in S,$
then there exist $v_{+},v_{-}\in T_{x}^{1}S$ such that $\gamma_{v_{+}}(t)$
and $\gamma_{v_{-}}(t)$ are asymptotic to $\rho$ and $-\rho$ ,
respectively, as $t\to\infty.$ If $x\in\cup_{i=1}^{q}\mathcal{D}_{i}$
then $v_{+}$ and $v_{-}$ can be chosen to be approximately in the
radial direction (see Definition \ref{def:ApproxRadial}). In addition,
if we are given a choice of Fermi coordinates $(\tau_{1},\tau_{2})$
on $(\widehat{F}(\epsilon_{0}),\widehat{h}),$ then we can choose
$v_{+}$ and $v_{-}$ so that for every $\epsilon>0$ there exists
$T=T(\epsilon)>0$ such that $\gamma_{v_{+}}|[T,\infty)$ and $\gamma_{v_{-}}|[T,\infty)$
have lifts $\widehat{\gamma}_{v_{+}}|[T,\infty)$ and $\widehat{\gamma}_{v_{-}}|[T,\infty)$,
respectively, to $(\widehat{F}(\epsilon_{0}),\widehat{h})$ such that
\[
0<\tau_{2}(\widehat{\gamma}_{v_{+}}(t))<\epsilon\text{\ and\ }0<\tau_{2}(\widehat{\gamma}_{v_{-}}(t))<\epsilon,\text{\ for\ }t\ge T.\]
\end{prop}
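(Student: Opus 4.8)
The plan is to construct $v_{+}$ (with $\gamma_{v_{+}}$ asymptotic to $\rho$); the vector $v_{-}$ is obtained by the identical construction applied to $-\rho$ in place of $\rho$, and the estimate on $\tau_{2}$ in the last line of the proposition will be built into the construction. Fix the given Fermi coordinates $(\tau_{1},\tau_{2})$ on $(\widehat{F}(\epsilon_{0}),\widehat{h})$, fix a small $\bar{\tau}_{2}>0$, choose $\bar{p}\in\widehat{F}(\epsilon_{0})$ with Fermi coordinates $(\bar{\tau}_{1},\bar{\tau}_{2})$, and let $\bar{v}=\widehat{\pi}_{*}\widehat{Z}(\bar{p})\in T^{1}S$, where $\widehat{Z}$ is the asymptotic vector field of Section \ref{sec:TubularNeighborhoods}. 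Then $\gamma_{\bar v}$ lifts to $\gamma_{\widehat{Z}(\bar p)}$, which is asymptotic to $\widehat{\rho}$; by the strict convexity and boundedness of $t\mapsto|\tau_{2}(\gamma_{\widehat{Z}(\bar p)}(t))|$ (negative curvature of $\widehat{h}$) this coordinate stays in $(0,\bar{\tau}_{2}]$ and decreases to $0$, so $\gamma_{\bar v}$ is asymptotic to $\rho$ and approaches it from the $\tau_{2}>0$ side. Let $W^{s}_{\bar v}$ be a short stable curve through $\bar v$, that is, an integral curve of the stable line field of Lemma \ref{lem:LineField} (which is continuous by Lemma \ref{lem:ContinuousLineFields}). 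Since $\gamma_{\bar v}$ is asymptotic to $\rho$, its forward orbit enters $T^{1}\mathcal{N}_{2}$ at a sequence of times tending to infinity with gaps at least $1$, and $\mathcal{L}_{Q}(W^{s}_{\bar v})<\infty$; so Lemma \ref{lem:StableCurveContraction}(2) applies to $W^{s}_{\bar v}$ and gives $\mathcal{L}(\varphi^{t}(W^{s}_{\bar v}))\to0$ as $t\to\infty$. Hence every $w\in W^{s}_{\bar v}$ has $\gamma_{w}$ asymptotic to $\gamma_{\bar v}$, and therefore to $\rho$.

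Next I would issue an approximately unstable curve from the fiber $T_{x}^{1}S$. If $x\in\mathcal{N}_{0}=S\setminus\bigcup_{i=1}^{q}\mathcal{D}_{i}$, then $\mathcal{K}^{u}_{v}=\mathcal{K}^{+}_{v}$ for all $v\in T_{x}^{1}S$ and each tangent vector to the fiber has vanishing $H$-component, hence lies in $\mathcal{K}^{+}_{v}$; so $\sigma_{0}:=T_{x}^{1}S$ is already an approximately unstable curve, and I set $\tau^{*}=0$, $\mathcal{I}_{0}=T_{x}^{1}S$. If $x\in\bigcup_{i=1}^{q}\mathcal{D}_{i}$, let $v_{0}$ be a radial vector at $x$ and let $\mathcal{I}_{0}=\{v\in T_{x}^{1}S:\text{dist}_{h}(v,v_{0})<\epsilon\}$ be the arc of approximately radial vectors (Definition \ref{def:ApproxRadial}, with $\epsilon$ as in Lemma \ref{lem:H=00003D0}); by Lemma \ref{lem:H=00003D0} and Remark \ref{rem:ConstantBasepointCurve}, $\sigma_{0}:=\varphi^{R}(\mathcal{I}_{0})$ is an approximately unstable curve of positive length, and I set $\tau^{*}=R$. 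In either case $\sigma_{0}=\varphi^{\tau^{*}}(\mathcal{I}_{0})$ for a sub-arc $\mathcal{I}_{0}\subset T_{x}^{1}S$ of positive length, and since the asymptotic property is preserved under $\varphi^{t}$ it suffices to find $v_{+}\in\mathcal{I}_{0}$ with $\gamma_{\varphi^{\tau^{*}}(v_{+})}$ asymptotic to $\rho$; if $x\in\bigcup_{i=1}^{q}\mathcal{D}_{i}$ such a $v_{+}$ is automatically approximately radial, which gives the second assertion of the proposition.

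The main step is to steer $\sigma_{0}$ onto $W^{s}_{\bar v}$, and this is where topological transitivity of $\varphi^{t}$ (a consequence of ergodicity with respect to Liouville measure) is essential. First I would show that $\overline{\bigcup_{t\ge0}\varphi^{t}(\sigma_{0})}=T^{1}S$: this set is closed and forward invariant, so by topological transitivity it is either nowhere dense or all of $T^{1}S$; and it cannot be nowhere dense, because $\sigma_{0}$ is a nondegenerate approximately unstable curve whose forward images do not collapse — Corollary \ref{cor:UnstableCurveExpansion}, together with the Lyapunov-length expansion recorded in (\ref{eq:TauUnstable}) and (\ref{eq:Tau0Factor}) — so the union has nonempty interior. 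In particular $\bigcup_{t\ge0}\varphi^{t}(\sigma_{0})$ accumulates on $\bar v$, and, since the same argument applies to $\varphi^{T_{0}}(\sigma_{0})$ for every $T_{0}$, it does so at arbitrarily large times. Combining this with Corollary \ref{cor:UnstableCurveExpansion} (a curve $\varphi^{\tau}(\sigma_{0})$ that is close to $\bar v$ meets $T^{1}\mathcal{N}_{2}$, so does not shrink below $r_{0}$), one finds $\tau>0$ such that $\varphi^{\tau}(\sigma_{0})$ passes within a prescribed small distance $\delta$ of $\bar v$ while having length at least $r_{0}$. Near $\bar v$ the curve $\varphi^{\tau}(\sigma_{0})$ is tangent to $\interior\mathcal{K}^{u}$ (by Lemma \ref{lem:H=00003D0} in the cap case, by the strict cone invariance (\ref{eq:StrictUnstableInvariance}) otherwise) while $W^{s}_{\bar v}$ is tangent to $\interior\mathcal{K}^{s}$, and $\interior\mathcal{K}^{u}_{w}\cap\interior\mathcal{K}^{s}_{w}=\{0\}$ for all $w$ (Lemma \ref{lem:DisjointCones}); so these curves are uniformly transverse near $\bar v$, and for $\delta$ small enough $\varphi^{\tau}(\sigma_{0})$ crosses $W^{s}_{\bar v}$ at some point $w$. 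Writing $w=\varphi^{\tau}(\varphi^{\tau^{*}}(v_{+}))$ with $v_{+}\in\mathcal{I}_{0}\subset T_{x}^{1}S$, we get $\gamma_{v_{+}}$ asymptotic to $\gamma_{w}$, hence to $\gamma_{\bar v}$, hence to $\rho$. I expect the hardest part to be the density statement $\overline{\bigcup_{t\ge0}\varphi^{t}(\sigma_{0})}=T^{1}S$ — turning global topological transitivity into the assertion that a single expanding unstable curve sweeps densely through $T^{1}S$ — which is exactly where the non-collapsing estimates of Section \ref{sec:LyapunovLineFields} and of this section must be used.

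Finally, for the last statement: $\gamma_{v_{+}}$ is asymptotic to $\gamma_{\bar v}$, and the lift $\widehat{\gamma}_{\bar v}$ of $\gamma_{\bar v}$ to $(\widehat{F}(\epsilon_{0}),\widehat{h})$ has $\tau_{2}\in(0,\bar{\tau}_{2}]$ with $\tau_{2}\to0$; hence the lift $\widehat{\gamma}_{v_{+}}$ of a tail of $\gamma_{v_{+}}$ lies in $\widehat{F}(\epsilon_{0})$ and shadows $\widehat{\gamma}_{\bar v}$, so $\tau_{2}(\widehat{\gamma}_{v_{+}}(t))\to0$ and, by the strict convexity of $t\mapsto|\tau_{2}(\widehat{\gamma}_{v_{+}}(t))|$, this coordinate is of constant positive sign from some time on. Thus for every $\epsilon>0$ there is $T=T(\epsilon)$ with $0<\tau_{2}(\widehat{\gamma}_{v_{+}}(t))<\epsilon$ for $t\ge T$. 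Repeating the entire construction with $\rho$ replaced by $-\rho$, targeting a vector on the stable manifold of the corresponding periodic orbit whose basepoint lies on the $\tau_{2}>0$ side, yields $v_{-}\in T_{x}^{1}S$ with $\gamma_{v_{-}}$ asymptotic to $-\rho$ and the analogous estimate, which completes the proof.
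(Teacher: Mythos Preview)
Your strategy---flow an approximately unstable curve forward from $T_x^1S$ until it meets the stable set of $\rho$---matches the paper's, but the intersection step fails for dimensional reasons. You propose to intersect the 1-dimensional curve $\varphi^\tau(\sigma_0)$ with the 1-dimensional stable curve $W^s_{\bar v}$ inside the 3-manifold $T^1S$; having tangents in $\interior\mathcal{K}^u$ and $\interior\mathcal{K}^s$ (Lemma \ref{lem:DisjointCones}) makes the curves transverse \emph{if} they meet, but does not force a meeting: $1+1<3$, so two such curves generically miss each other no matter how close $\varphi^\tau(\sigma_0)$ comes to $\bar v$. This is the familiar issue with local product structure for flows---one must use a weak (flow-saturated) manifold on one side, not the strong one. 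The paper handles this by making one side 2-dimensional in two places: it builds the surface $\mathcal{A}_0=\{\varphi^t(\sigma(s)):|s|<a,\ |t-t_0|<b\}$ so that short stable curves through nearby points must hit $\mathcal{A}_0$ (Lemma \ref{lem:StableConnecting}, dimension count $1+2=3$), and after arranging via Corollary \ref{cor:VisitU} that some $\varphi^{t_n}(\sigma(s_0))$ lands in a small neighborhood near $\widehat{Z}$, it intersects the 1-dimensional curve $s\mapsto\varphi^{t_n}(\sigma(s))$ with the 2-dimensional image of the asymptotic field $\widehat{Z}$ by an intermediate-value argument on angles (using the curvature comparison of Lemma \ref{lem:FermiCurvature}).

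Your density claim has a related gap: $\bigcup_{t\ge 0}\varphi^t(\sigma_0)$ is an immersed 2-dimensional surface in $T^1S$ and hence has empty interior; Corollary \ref{cor:UnstableCurveExpansion} prevents length collapse but does not show that the \emph{closure} has nonempty interior, so the dichotomy you invoke does not apply. The paper sidesteps this by taking an honest open 3-dimensional neighborhood $\mathcal{A}(\eta)$ of $\mathcal{A}_1$ (positive Liouville measure), using ergodicity to find a point of $\mathcal{A}(\eta)$ whose forward orbit visits the target set, and then pushing that recurrence onto $\mathcal{A}_0$ along a stable curve via Lemma \ref{lem:StableConnecting} and Lemma \ref{lem:StableCurveContraction}(2).
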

\begin{proof}
Let $x\in S$ and let $\sigma:[-a_{0},a_{0}]\to T_{x}^{1}S$, $a_{0}>0,$
be a regular $C^{1}$ curve (with constant basepoint $x).$ If $x\in\cup_{i=1}^{q}\mathcal{D}_{i},$
then $\sigma$ is chosen so that $\sigma(s)$ is approximately in
the radial direction for all $s\in[-a_{0},a_{0}].$ We will show that
for some $s_{2}\in(-a_{0},a_{0}),$ the vector $\sigma(s_{2})$ satisfies
the conditions required of $v_{+}.$ Let $R$ be as in Lemma \ref{lem:H=00003D0},
and let $t_{0}>R$ be such that $x$ is not conjugate to $\gamma_{\sigma(0)}(t_{0})$
along $\gamma_{\sigma(0)}|[0,t_{0}].$ Let $0<a<a_{0}/2$ and $0<b<(t_{0}-R)/2,$
and assume $a,b$ are sufficiently small so that $\exp_{x}:\{t\sigma(s)\in T_{x}S:-2a<s<2a$,
$t_{0}-2b<t<t_{0}+2b\}$ $\to S$ is a diffeomorphism onto its image.
Define \[
\mathcal{A}_{0}=\mathcal{A}_{0}(a,b,t_{0})=\{\varphi^{t}(\sigma(s)):-a<s<a,t_{0}-b<t<t_{0}+b\}\]
 and \[
\mathcal{A}_{1}=\mathcal{A}_{1}(a,b,t_{0})=\{\varphi^{t}(\sigma(s)):-a/2\le s\le a/2,t_{0}-(b/2)\le t\le t_{0}+(b/2)\}.\]
Note that $\varphi^{t}(\sigma(s))\subset(T^{1}S)\setminus(\cup_{i=1}^{q}\mathcal{Z}_{i}),$
for $s\in[-a_{0},a_{0}]$ and $t\ge0$ (where $\mathcal{Z}_{i}$ is
in Definition \ref{def:Z}), because $\sigma(s)\notin(\cup_{i=1}^{q}\mathcal{Z}_{i})$
and $\varphi^{t}((T^{1}S)\setminus(\cup_{i=1}^{q}\mathcal{Z}_{i}))\subset$
$(T^{1}S)\setminus(\cup_{i=1}^{q}\mathcal{Z}_{i})$ for all $t\ge0.$
We require $a,b$ to be sufficiently small so that there exist an
open subset $W$ of $T^{1}S$ with $\overline{\mathcal{A}_{0}}\subset W\subset(T^{1}S)\setminus(\cup_{i=1}^{q}\mathcal{Z}_{i})$
and a coordinate chart $\Psi:W\to\mathbb{R}^{3}$ such that $\Psi(\mathcal{A}_{0})$
is an open subset of $\mathbb{R}^{2}.$ Here, and in the proof of
Lemma \ref{lem:StableConnecting}, we identify $\mathbb{R}^{2}$ with
$\mathbb{R}^{2}\times\{0\}\subset\mathbb{R}^{3}.$ For $\eta>0,$
let $\mathcal{A}=\mathcal{A}(\eta)$ be an $\eta-$neighborhood of
$\mathcal{A}_{1}.$

We will continue with the proof of Proposition \ref{pro:Asymptotic}
after Lemma \ref{lem:StableConnecting} and Corollary \ref{cor:VisitU}.
\begin{lem}
\label{lem:StableConnecting}If $\eta=\eta(a,b,W,\Psi)$ is sufficiently
small, then every vector $w\in\mathcal{A}(\eta)\setminus\mathcal{A}_{0}$
can be joined to a vector in $\mathcal{A}_{0}$ by a stable curve
with finite Lyapunov length.\end{lem}
\begin{proof}
By Lemma \ref{lem:H=00003D0}, the curves $s\mapsto\varphi^{t}(\sigma(s)),$
$-a_{0}\le s\le a_{0},$ are approximately unstable for $t\ge R.$
Thus each tangent plane to $\overline{\mathcal{A}_{0}}$ is spanned
by a vector tangent to an orbit of the geodesic flow and a vector
in an unstable cone. By Lemma \ref{lem:LineField}, the $E^{s}$ line
field is contained in the interiors of the stable cones at vectors
in $\overline{\mathcal{A}_{0}}.$ Hence Lemma \ref{lem:DisjointCones}
implies that $E^{s}$ is transversal to the tangent plane to $\overline{\mathcal{A}_{0}}$
at each vector in $\overline{\mathcal{A}_{0}}.$ Since $E^{s}$ is
continuous on $W,$ it follows that there exists $c>0$ such that
$\Psi(\overline{\mathcal{A}_{0}})\times(-c,c)\subset\Psi(W)$ and
the line field $d\Psi(E^{s})$ is uniformly transversal to horizontal
planes in $\Psi(\overline{\mathcal{A}_{0}})\times(-c,c).$ Let $\zeta>0$
be such that within $\Psi(\overline{\mathcal{A}_{0}})\times(-c,c)$
any unit vector along $d\Psi(E^{s})$ has a component of absolute
value greater than $\zeta$ in the vertical direction. Let $h_{0}=$
dist$(\Psi(\mathcal{A}_{1}),\Psi(\partial\mathcal{A}_{0})).$ We choose
$\eta$ sufficiently small so that $\mathcal{A}(\eta)\subset W,$
dist$(\Psi(\mathcal{A}(\eta)),\Psi(\partial\mathcal{A}_{0})\times\mathbb{R})>h_{0}/2$,
and for any $(p_{1},p_{2},p_{3})\in\Psi(\mathcal{A}(\eta)),$ we have
$|p_{3}|<$ min$(c,h_{0}\zeta/2).$ Let $w\in\mathcal{A}(\eta)\setminus\mathcal{A}_{0}$.
Then $\Psi(w)=(p_{1},p_{2},p_{3}),$ where $p_{3}\ne0.$ We suppose
$p_{3}>0.$ Since $d\Psi(E_{s})$ is a continuous line field on $d\Psi(W),$
there exists a unit speed curve $\tilde{\beta}$ that is everywhere
tangent to $d\Psi(E^{s})$ and starts at $\tilde{\beta}(0)=p_{3}$
with $\tilde{\beta}'(0)$ having a negative component in the vertical
direction. Then $\tilde{\beta}$ must reach $\mathbb{R}^{2}\times\{0\}$
in time less than $p_{3}/\zeta,$ unless it first exits the region
$\Psi(\mathcal{A}_{0})\times\mathbb{R}.$ But it cannot exit this
region in time less than $h_{0}/2,$ and by our choice of $\eta,$
we have $p_{3}/\zeta<h_{0}/2.$ Thus, for some $\ell_{1}\in(0,p_{3}/\zeta),$
the curve $\tilde{\beta}(t),$ $0\le t\le\ell_{1},$ connects $(p_{1},p_{2},p_{3})$
to a point in $\Psi(\mathcal{A}_{0}).$ Then $\beta(t)=\Psi^{-1}(\tilde{\beta}(t)),$
$0\le t\le\ell_{1},$ is a stable curve that joins $w$ to a vector
in $\mathcal{A}_{0}.$ Since tr$(\beta)$ is a compact subset of $(T^{1}S)\setminus(\cup_{i=1}^{q}\mathcal{Z}_{i}),$
it follows from Remark \ref{rem:LyapunovLengthBound} that $\beta$
has finite Lyapunov length. The case $p_{3}<0$ can be handled similarly. \end{proof}
\begin{cor}
\label{cor:VisitU}If $\mathcal{U}_{0}$ is a nonempty open subset
of $T^{1}\mathcal{N}_{2},$ where $\mathcal{N}_{2}$ is as in Lemma
\ref{lem:StableCurveContraction}, then there exists $s_{0}\in(-a,a)$
and a sequence $(t_{n})_{n=1,2,\dots}$ with $t_{1}\ge0$ and $t_{n+1}>1+t_{n}$
for $n=1,2,\dots,$ such that $\varphi^{t_{n}}(\sigma(s_{0}))\in\mathcal{U}_{0},$
for $n=1,2,\dots.$\end{cor}
\begin{proof}
Let $\mathcal{U}_{1}$ be a nonempty open subset of $\mathcal{U}_{0}$
such that $\overline{\mathcal{U}_{1}}\subset\mathcal{U}_{0},$ and
let $\eta>0$ be such that Lemma \ref{lem:StableConnecting} holds
for $\mathcal{A}=\mathcal{A}(\eta).$ Then $\mathcal{U}_{1}$ and
$\mathcal{A}$ each have positive Liouville measure. By the ergodicity
of the geodesic flow $\varphi^{t},$ we know that there exists $w\in\mathcal{A}$
 and a sequence $(\widetilde{t}_{n})_{n=1,2,\dots}$such that $\widetilde{t}_{1}\ge0$
and $\widetilde{t}_{n+1}>1+\widetilde{t}_{n}$ for $n=1,2,\dots,$
and $\varphi^{\tilde{t}_{n}}(w)\in\mathcal{U}_{1}$ for $n=1,2,\dots.$
If $w\notin\mathcal{A}_{0}$ then by Lemma \ref{lem:StableConnecting},
there is a stable curve $\beta$ from $w$ to a vector $v$ in $\mathcal{A}_{0}.$
The vector $v$ can be written as $v=\varphi^{\tilde{t}_{0}}(\sigma(s_{0}))$
for some $s_{0}\in(-a,a)$ and $t_{0}-b<\widetilde{t}_{0}<t_{0}+b.$
By part (2) of Lemma \ref{lem:StableCurveContraction}, $\lim_{n\to\infty}\mathcal{L}(\varphi^{\tilde{t}_{n}}(\beta))=0.$
Thus there exists $N$ such that $\widetilde{t}_{N+1}>\widetilde{t}_{0},$
and for $n>N,$ $\varphi^{\tilde{t}_{n}}(v)\in\mathcal{U}_{0}.$ Then
the Corollary holds with $t_{n}=\widetilde{t}_{N+n}+\widetilde{t}_{0}.$
If $w\in\mathcal{A}_{0},$ then we already have $w=\varphi^{\tilde{t}_{0}}(\sigma(s_{0}))$
for some $s_{0}\in(-a,a)$ and $t_{0}-b<\widetilde{t}_{0}<t_{0}+b$
and the Corollary holds with $t_{n}=\widetilde{t}_{n}+\widetilde{t}_{0}.$
\end{proof}
We now complete the proof of Proposition \ref{pro:Asymptotic}. Let
$\epsilon_{0}\in(0,1)$, $F(\epsilon_{0}),$ and $\widehat{F}(\epsilon_{0})$
be as described in Section \ref{sec:TubularNeighborhoods}, with Fermi
coordinates $(\tau_{1},\tau_{2})$ along $\widehat{\rho}$ in the
metric $\widehat{h}$ defined in $(\mathbb{R}/L\mathbb{Z})\times[-\epsilon_{0},\epsilon_{0}].$
Let $\widehat{Z}$ be the unit vector field on $\widehat{F}(\epsilon_{0})$
that is asymptotic to $\widehat{\rho},$ as in Section \ref{sec:TubularNeighborhoods}.

Since $F(\epsilon_{0})\subset S\setminus(\cup_{i=1}^{q}\mathcal{D}_{i}),$
it follows from (\ref{eq:StrictUnstableInvariance}) that if $v\in T^{1}S$
has its basepoint in $F(\epsilon_{0})$ and $d\varphi^{-1}v\in\mathcal{K}_{\varphi^{-1}v}^{u},$
then $d\varphi^{1}(\mathcal{K}_{d\varphi^{-1}v}^{u})\subset\interior\mathcal{K}_{v}^{u}=\interior\mathcal{K}_{v}^{+}.$
Moreover, by a compactness argument, there exist $k_{0},k_{1},$ $0<k_{0}<k_{1}<\infty,$
such that for all such $v,$ $d\varphi^{1}(\mathcal{K}_{d\varphi^{-1}v}^{u})$
is contained in a cone in $\mathcal{K}_{v}^{+}$ that is bounded by
lines of slopes $k_{0}$ and $k_{1}$ in the $H,V$ coordinates. Therefore,
if $t\ge R+1$ and $\sigma_{0}$ is the restriction of $\sigma$ to
a subinterval of $[-a_{0},a_{0}]$ such that $\varphi^{t}\sigma_{0}$
has all of its basepoints in $F(\epsilon_{0}),$ then the curvature
of the curve of basepoints of $\varphi^{t}\sigma_{0}$ (with respect
to the metric $h$ and the normal field given by $\varphi^{t}\sigma_{0}$)
is in the interval $[k_{0},k_{1}],$ because $\varphi^{t}\sigma_{0}=\varphi^{1}(\varphi^{t-1}\sigma_{0})$
and $\varphi^{t-1}\sigma_{0}$ is approximately unstable (by Remark
\ref{rem:ConstantBasepointCurve}). Likewise, the curve of basepoints
of any lift of such a $\varphi^{t}\sigma_{0}$ to $T^{1}(\widehat{F}(\epsilon_{0}))$
has curvature (with respect to the metric $\widehat{h})$ in $[k_{0},k_{1}].$

We now apply Lemma \ref{lem:FermiCurvature} with $\gamma=\widehat{\rho},$
$\epsilon_{1}=\epsilon_{0},$ $\overline{F}(\epsilon_{0})=\widehat{F}(\epsilon_{0}),$
and $\overline{h}=\widehat{h}.$ Let $\epsilon\in(0,\epsilon_{0}]$
be such that the conclusion of Lemma \ref{lem:FermiCurvature} holds
for $\zeta=1/2$ and $k_{0}$ as above. Let $\kappa_{0}=k_{0}/2$
and $\kappa_{1}=(3/2)k_{1},$ and let $\widehat{F}(\epsilon)\subset\widehat{F}(\epsilon_{0})$
be defined as in (\ref{eq:FHat}). By (\ref{eq:NhNeComparison}),
there is a constant $C_{0}>0$ such that along any regular $C^{2}$
curve $\widehat{\alpha}(s)=(\tau_{1}(s),\tau_{2}(s))$, $0\le s\le\ell,$
in $\widehat{F}(\epsilon),$ the unit normal vectors $N_{\widehat{h}}$
and $N_{e}$ to $\widehat{\alpha}$ (in the $\widehat{h}$ metric
and the Euclidean metric, respectively) satisfy \begin{equation}
\measuredangle(N_{\widehat{h}},N_{e})\le C_{0}\tau_{2}^{2},\label{eq:NgNeAngle}\end{equation}
where $\measuredangle$ denotes the angle measured in the Euclidean
coordinate system. Since, by (\ref{eq:hijEstimate}), the ratio of
the Euclidean length to the length in the $\widehat{h}$ metric is
close to $1$ for $\tau_{2}$ close to $0,$ we may also assume that
$\epsilon$ is sufficiently small so that \begin{equation}
||v||_{\widehat{h}}\le2||v||_{e},\label{eq:LengthDoubling}\end{equation}
 for all $v\in TS$ with basepoint in $\widehat{F}(\epsilon).$ After
choosing $\epsilon,$ we choose $\delta$ so that \[
0<\delta<\frac{1}{C_{0}+2}\min\left(\epsilon,\frac{\epsilon\kappa_{0}}{2},\frac{\pi\kappa_{0}}{2\kappa_{1}},\frac{r_{0}\kappa_{0}}{2\sqrt{1+k_{1}^{2}}}\right),\]
 where $r_{0}$ is as in Corollary \ref{cor:UnstableCurveExpansion}.

Let $\widehat{\mathcal{U}}_{0}=\{\widehat{w}\in T_{p}^{1}(\widehat{F}(\epsilon)):p=(\tau_{1},\tau_{2}),$
$\text{\ }$$0<\tau_{2}<\delta,$$\text{\ }$$\measuredangle(\widehat{Z}(p),\partial/\partial\tau_{1})<\delta,$
$\measuredangle(\widehat{w},\widehat{Z}(p))<\delta,\text{\ }\measuredangle(\widehat{w},-\partial/\partial\tau_{2})<\pi/2\text{,\ and\ }\measuredangle(\widehat{w},\partial/\partial\tau_{1})>\measuredangle(\widehat{Z}(p),\partial/\partial\tau_{1})\}.$
For the rest of the proof of this proposition, the signed Euclidean
angle from one vector to another at the same basepoint in $\widehat{F}(\epsilon)$
will be taken to be in $(-\pi,\pi]$ and the counterclockwise direction
will be the positive direction. In particular, for $\widehat{w}\in\widehat{\mathcal{U}}_{0}\cap T_{p}^{1}(\widehat{F}(\epsilon))$
the signed Euclidean angle from $\widehat{Z}(p)$ to $\widehat{w}$
is negative. Let $\mathcal{U}_{0}$ be the image of $\widehat{\mathcal{U}}_{0}$
under the projection $\widehat{\pi}$ from $T^{1}(\widehat{F}(\epsilon))$
to $T^{1}(F(\epsilon)).$ Let $s_{0}\in(-a,a)$ and the sequence $(t_{n})_{n=1,2,\dots,}$
be as in Corollary \ref{cor:VisitU} applied to this choice of $\mathcal{U}_{0}.$
If $n$ is sufficiently large, then by Corollary \ref{cor:UnstableCurveExpansion},
$\mathcal{L}(\varphi^{t_{n}}(\sigma|[s_{0},a)))>r_{0}$ and $\mathcal{L}(\varphi^{t_{n}}(\sigma|(-a,s_{0}]))>r_{0}$.
Now fix a choice of such an $n,$ where we also require that $t_{n}\ge R+1.$

Let $w=\varphi^{t_{n}}(\sigma(s_{0}))$ and let $\widehat{w}\in\widehat{\mathcal{U}}_{0}$
be such that $\widehat{w}$ projects to $w.$ Let $\widehat{\sigma}$
be the lift of $\varphi^{t_{n}}(\sigma(s)),$ $s_{0}\le s\le a_{1},$
to $T^{1}(\widehat{F}(\epsilon)),$ where $\varphi^{t_{n}}(\sigma(s_{0}))$
lifts to $\widehat{w}$ and the curve $\widehat{\sigma}$ is truncated,
if necessary, at $s=a_{1},$ where $\widehat{\sigma}$ exits $T^{1}(\widehat{F}(\epsilon))$.
(If it doesn't exit $T^{1}(\widehat{F}(\epsilon)),$ we take $a_{1}=a.)$
Let $\widehat{\alpha}(s),$ $s_{0}\le s\le a_{1},$ be the curve of
basepoints of $\widehat{\sigma}.$ Then $\widehat{\alpha}'(s_{0})$
is orthogonal to $\widehat{w}$ in the $\widehat{h}$ metric. Let
$N_{\widehat{h}}$ and $N_{e}$ be unit normal fields along $\widehat{\alpha}(s)$
in the $\widehat{h}$-metric and the Euclidean metric, chosen so that
$N_{\widehat{h}}(0)=\widehat{w}$ and $N_{\widehat{h}}(s)$ and $N_{e}(s)$
lie on the same side of the tangent line to $\widehat{\alpha}$ at
all $s\in[0,a_{1}].$ Since $\measuredangle(\widehat{w},\partial/\partial\tau_{1})<2\delta<\pi/2$
and $\measuredangle(\widehat{w},-\partial/\partial\tau_{2})<\pi/2,$
$N_{\widehat{h}}(0)$ has a positive component in the $\partial/\partial\tau_{1}$
direction and a negative component in the $\partial/\partial\tau_{2}$
direction, which, according to equations (\ref{eq:Nh}) and (\ref{eq:Ne}),
implies that the same is true of $N_{e}(0).$ In particular, we see
that $\widehat{\alpha}'(s_{0})$ must have a nonzero component in
the $\partial/\partial\tau_{2}$ direction. We will assume that this
component is positive. (If not, we would replace $\varphi^{t_{n}}(\sigma|[s_{0},a])$
by $\varphi^{t_{n}}(\sigma|[-a,s_{0}])$ in our argument.) We may
assume that the parametrization of $\sigma$ is such that $s_{0}=0$
and $\widehat{\alpha}(s)$ is parametrized by Euclidean arc length.

\begin{figure}[htbp]\begin{center}
\begin{picture}(0,0)%
\includegraphics{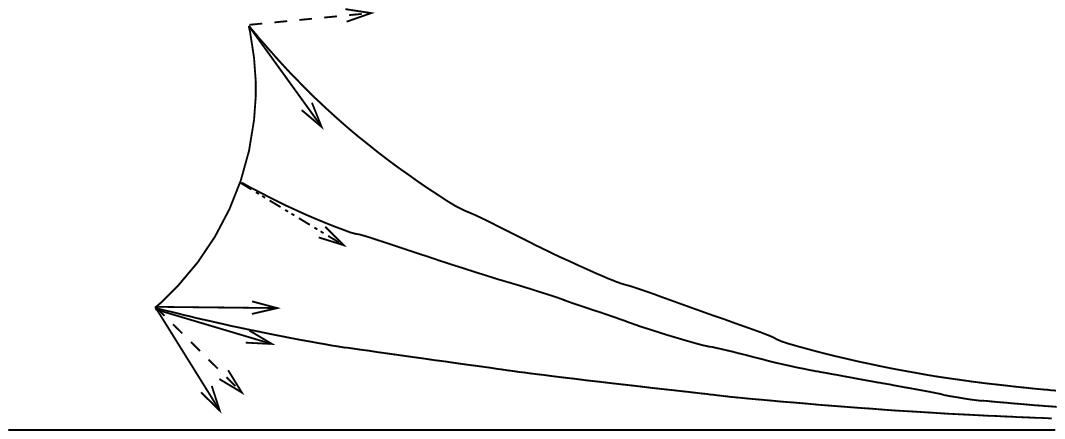}%
\end{picture}%
\setlength{\unitlength}{3947sp}%
\begingroup\makeatletter\ifx\SetFigFont\undefined%
\gdef\SetFigFont#1#2#3#4#5{%
  \reset@font\fontsize{#1}{#2pt}%
  \fontfamily{#3}\fontseries{#4}\fontshape{#5}%
  \selectfont}%
\fi\endgroup%
\begin{picture}(6154,2434)(-1064,-7130) \put(827,-6973){\makebox(0,0)[lb]{\smash{{\SetFigFont{10}{14.4}{\familydefault}{\mddefault}{\updefault}{$N_e(0)$}%
}}}} \put(1834,-4867){\makebox(0,0)[lb]{\smash{{\SetFigFont{10}{14.4}{\familydefault}{\mddefault}{\updefault}{$N_{\widehat{h}}(s_1)$}%
}}}} \put(730,-4914){\makebox(0,0)[lb]{\smash{{\SetFigFont{10}{14.4}{\familydefault}{\mddefault}{\updefault}{$\widehat{\alpha}(s_1)$}%
}}}} \put(650,-5630){\makebox(0,0)[lb]{\smash{{\SetFigFont{10}{14.4}{\familydefault}{\mddefault}{\updefault}{$\widehat{\alpha}(s_2)$}%
}}}} \put(350,-6250){\makebox(0,0)[lb]{\smash{{\SetFigFont{10}{14.4}{\familydefault}{\mddefault}{\updefault}{$\widehat{\alpha}(0)$}%
}}}} \put(1649,-5240){\makebox(0,0)[lb]{\smash{{\SetFigFont{10}{14.4}{\familydefault}{\mddefault}{\updefault}{$\widehat{Z}_{\widehat{\alpha}(s_1)}$}%
}}}} \put(2538,-7057){\makebox(0,0)[lb]{\smash{{\SetFigFont{10}{14.4}{\familydefault}{\mddefault}{\updefault}{$\widehat{\rho}$}%
}}}} \put(1397,-6269){\makebox(0,0)[lb]{\smash{{\SetFigFont{8}{14.4}{\familydefault}{\mddefault}{\updefault}{$\partial/\partial\tau_1$}%
}}}} \put(1224,-6050){\makebox(0,0)[lb]{\smash{{\SetFigFont{8}{14.4}{\familydefault}{\mddefault}{\updefault}{$N_{\widehat{h}}(s_2)=\widehat{Z}_{\widehat{\alpha}(s_2)}$}%
}}}} \put(1311,-6541){\makebox(0,0)[lb]{\smash{{\SetFigFont{10}{14.4}{\familydefault}{\mddefault}{\updefault}{$\widehat{Z}_{\widehat{\alpha}(0)}$}%
}}}} \put(1190,-6706){\makebox(0,0)[lb]{\smash{{\SetFigFont{10}{14.4}{\familydefault}{\mddefault}{\updefault}{$N_{\widehat{h}}(0)$}%
}}}} \end{picture}%
\caption{Rotation of $N_{\widehat{h}}$.} \label{figure:2} \end{center} \end{figure}

By the choice of $\epsilon$ and by (\ref{eq:NgNeAngle}), we know
that $\measuredangle(N_{e}(0),N_{\widehat{h}}(0))<C_{0}\delta^{2}<C_{0}\delta.$
Thus $\measuredangle(N_{e}(0),\partial/\partial\tau_{1})<(C_{0}+2)\delta.$
According to Lemma \ref{lem:FermiCurvature}, the Euclidean curvature
of $\widehat{\alpha}(s)$ is between $\kappa_{0}$ and $\kappa_{1}$
for $0\le s\le a_{1}.$ Thus $N_{e}(s)$ rotates in the counterclockwise
direction at a rate between $\kappa_{0}$ and $\kappa_{1}$ radians
per unit time. For $s\in[0,\min(a_{1},(C_{0}+2)\delta/\kappa_{0})],$
the signed Euclidean angle from $\partial/\partial\tau_{1}$ to $N_{e}(s),$
is strictly between $-(C_{0}+2)\delta+\kappa_{0}s$ and $\kappa_{1}s,$
which implies it is strictly between $-(C_{0}+2)\delta$ and $\pi/2.$
Thus $\widehat{\alpha}'(s)$ has a non-zero component in the $\partial/\partial\tau_{2}$
direction for $s\in[0,\min(a_{1},(C_{0}+2)\delta/\kappa_{0})].$ In
fact, this component must be positive, because $\widehat{\alpha}'(0)$
has a positive component in the $\partial/\partial\tau_{2}$ direction.
Since the component of $\widehat{\alpha}'(s)$ in the $\partial/\partial\tau_{2}$
direction is at most $1,$ and $0<\tau_{2}(\widehat{\alpha}(0))<\delta<\epsilon/2,$
we obtain $0<\tau_{2}(\widehat{\alpha}(s))<\epsilon/2+(C_{0}+2)\delta/\kappa_{0}<\epsilon$
for $s\in[0,\min(a_{1},(C_{0}+2)\delta/\kappa_{0})].$

The length of $\widehat{\sigma}(s),$ $0\le s\le\min(a_{1},(C_{0}+2)\delta/\kappa_{0})]$,
in $T^{1}S$ with the metric induced by $\widehat{h},$ is at most
$(1+k_{1}^{2})^{1/2}$ times the $\widehat{h}$-length of $\widehat{\alpha}(s),$
$0\le s\le\min(a_{1},(C_{0}+2)\delta/\kappa_{0}).$ By (\ref{eq:LengthDoubling}),
the $\widehat{h}$-length of $\widehat{\alpha}(s),$ $0\le s\le\min(a_{1},(C_{0}+2)\delta/\kappa_{0}),$
is at most $2(C_{0}+2)\delta/\kappa_{0}.$ Thus the length of $\widehat{\sigma}(s),$
$0\le s\le\min(a_{1},(C_{0}+2)\delta/\kappa_{0})$, is at most $2(1+k_{1}^{2})^{1/2}(C_{0}+2)\delta/\kappa_{0}$,
which is less than $r_{0}.$ Thus $\widehat{\sigma}(s)$ can neither
exit $T^{1}(\widehat{F}(\epsilon))$ nor reach length $r_{0}$ by
time $s=\min(a_{1},(C_{0}+2)\delta/\kappa_{0}).$ Hence $a_{1}\ge(C_{0}+2)\delta/\kappa_{0}.$

Since $N_{e}(s)$ rotates counterclockwise at a rate of at least $\kappa_{0}$
radians per unit time, there is an $s_{1}\in(0,(C_{0}+2)\delta/\kappa_{0})$
at which $N_{e}(s_{1})$ has a positive component in the $\partial/\partial\tau_{2}$
direction, and by (\ref{eq:Nh}) and (\ref{eq:Ne}), $N_{\widehat{h}}(s_{1})$
also has a positive component in the $\partial/\partial\tau_{2}$
direction. Moreover, $s_{1}$ may be chosen so that $N_{e}(s)$ and
$N_{\widehat{h}}(s)$ have positive components in the $\partial/\partial\tau_{1}$
direction for all $s\in[0,s_{1}].$ Each vector in $\widehat{Z}$
with basepoint in the $\tau_{2}>0$ region has a positive component
in the $\partial/\partial\tau_{1}$ direction and a negative component
in the $\partial/\partial\tau_{2}$ direction. Thus the signed Euclidean
angle from $\widehat{Z}_{\widehat{\alpha}(s)}$ to $N_{\widehat{h}}(s)$
changes from being negative at $s=0$ to being positive at $s=s_{1}.$
Both $N_{\widehat{h}}(s)$ and $\widehat{Z}_{\widehat{\alpha}(s)}$
are continuous unit vector fields along $\widehat{\alpha}(s),$ $0\le s\le s_{1}.$
By the intermediate value theorem, there exists $s_{2}\in(0,s_{1})$
such that $N_{\widehat{h}}(s_{2})=\widehat{Z}_{\widehat{\alpha}(s_{2})}.$
Since $N_{\widehat{h}}(s_{2})$ is a lift to $T^{1}(\widehat{F}(\epsilon))$
of $\varphi^{t_{n}}(\sigma(s_{2})),$ it follows that $v_{+}=\sigma(s_{2})$
has the required properties. The existence of $v_{-}$ also follows,
since we may replace $\rho$ by $-\rho$ in the above proof. \end{proof}
\begin{prop}
\label{pro:BangertGutkinProperty}Let $\rho$ be the closed geodesic
constructed in Section \ref{sec:Construction-of-Metrics}. For each
$(x,y)\in S\times S$ there exists an infinite family of distinct
geodesics $\gamma_{n}:[0,L_{n}]\to S,$ $n=1,2,\dots,$ from $x$
to $y$ with $\lim_{n\to\infty}L_{n}=\infty$ satisfying the following:
for every $\epsilon>0,$ there exists $T=T(\epsilon)>0$ and $N=N(\epsilon)$
such that for $n>N$ and $t\in[T,L_{n}-T],$ we have ${\rm dist}(\gamma_{n}(t),{\rm tr}(\rho))<\epsilon.$
Suppose $\widetilde{\rho}$ and $(\widetilde{F}(\epsilon_{0}),\widetilde{h})$
are as in Section \ref{sec:TubularNeighborhoods}, and we are given
a choice of Fermi coordinates $(\tau_{1},\tau_{2})$ on $(\widetilde{F}(\epsilon_{0}),\widetilde{h})$
such that $\tau_{1}$ is the coordinate along $\widetilde{\rho}$
and $\tau_{2}$ is the coordinate along geodesics perpendicular to
$\widetilde{\rho}.$ Then the geodesics $\gamma_{n}$ can be chosen
so that if $0<\epsilon<\epsilon_{0}$ and $n>N,$ there exists a lift
$\widetilde{\gamma}_{n}|[T,L_{n}-T]$ of $\gamma_{n}|[T,L_{n}-T]$
to $\widetilde{F}(\epsilon_{0})$ such that \[
0<\tau_{2}(\widetilde{\gamma}_{n}(t))<\epsilon,\text{\ for\ }t\in[T,L_{n}-T].\]
\end{prop}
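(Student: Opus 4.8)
The plan is to realize each $\gamma_{n}$ as a genuine geodesic from $x$ to $y$ that is $C^{1}$-close to a broken geodesic consisting of a fixed initial arc supplied by Proposition \ref{pro:Asymptotic} at $x$, a long middle arc winding $\approx n$ times near $\text{tr}(\rho)$ on the $\tau_{2}>0$ side, and a fixed terminal arc supplied by Proposition \ref{pro:Asymptotic} at $y$. Fix $\epsilon\in(0,\epsilon_{0})$ and apply Proposition \ref{pro:Asymptotic} with the constant $\epsilon/2$: at $x$ it yields $v_{+}\in T_{x}^{1}S$ with $\gamma_{v_{+}}$ asymptotic to $\rho$ and a time $T=T(\epsilon/2)$ for which $\gamma_{v_{+}}|[T,\infty)$ has a lift with $0<\tau_{2}<\epsilon/2$; at $y$, applied with $\rho$ replaced by $-\rho$, it yields $v_{-}\in T_{y}^{1}S$ with $\gamma_{v_{-}}$ asymptotic to $-\rho$, so that $\gamma^{-}(t):=\gamma_{v_{-}}(-t)$ defines $\gamma^{-}\colon(-\infty,0]\to S$ with $\gamma^{-}(0)=y$, asymptotic to $\rho$ as $t\to-\infty$, with $\gamma^{-}|(-\infty,-T]$ lifting into $0<\tau_{2}<\epsilon/2$ and winding in the same $\tau_{1}$-direction as $\gamma_{v_{+}}$. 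Put $p_{+}=\gamma_{v_{+}}(T)$, $u_{+}=\gamma_{v_{+}}'(T)$, $p_{-}=\gamma^{-}(-T)$, $u_{-}=\gamma^{-}{}'(-T)$, all lying in $\text{int}\,F(\epsilon_{0})$.

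\emph{Step 1: connecting geodesics inside the negatively curved tube.} As noted in Section \ref{sec:TubularNeighborhoods}, $(\widehat{F}(\epsilon_{0}),\widehat{h})$ extends to a closed negatively curved surface $M_{0}$; pass to its universal cover $\widetilde{M}_{0}$, in which $\widetilde{F}(\epsilon_{0})$ is the $\epsilon_{0}$-tube about the complete geodesic $\widetilde{\rho}$, both the tube $\{|\tau_{2}|\le\epsilon_{0}\}$ and the half-plane $\{\tau_{2}\ge 0\}$ bounded by $\widetilde{\rho}$ are convex, and $\tau_{2}=\text{dist}(\cdot,\widetilde{\rho})$ is convex on $\{\tau_{2}\ge 0\}$. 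Let $\widetilde{\gamma}^{+}$, $\widetilde{\gamma}^{-}$ be lifts of $\gamma_{v_{+}}|[T,\infty)$, $\gamma^{-}|(-\infty,-T]$ to $\widetilde{F}(\epsilon_{0})$, forward-asymptotic to one endpoint $\omega_{+}$ of $\widetilde{\rho}$ and backward-asymptotic to the other endpoint $\omega_{-}$, respectively, with $\widetilde{p}_{\pm}$ the corresponding lifts of $p_{\pm}$. For each large $k$, let $\widetilde{\delta}_{k}$ be the (unique) geodesic of $\widetilde{M}_{0}$ from $\widetilde{p}_{+}$ to the lift of $p_{-}$ obtained by translating $\widetilde{p}_{-}$ by $kL$ along $\tau_{1}$. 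Its endpoints lie in the convex region $\{0<\tau_{2}<\epsilon_{0}\}$, so $\widetilde{\delta}_{k}$ stays there, and convexity of $\tau_{2}$ then forces $\widetilde{\delta}_{k}\subset\{0<\tau_{2}<\epsilon/2\}$. As $k\to\infty$ the terminal point tends to $\omega_{+}$ in $\widetilde{M}_{0}\cup\partial_{\infty}\widetilde{M}_{0}$ and, after translating by $-kL$, the initial point tends to $\omega_{-}$; hence $\widetilde{\delta}_{k}\to\widetilde{\gamma}^{+}$ and its $-kL$-translate $\to\widetilde{\gamma}^{-}$ uniformly on compacta, so the initial velocity of $\widetilde{\delta}_{k}$ tends to $u_{+}$ and its terminal velocity (after translation) tends to $u_{-}$. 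Projecting to $S$ gives a geodesic $\delta_{k}$ from $p_{+}$ to $p_{-}$ of length $\ell_{k}\to\infty$, contained in $\{0<\tau_{2}<\epsilon/2\}$, with $\delta_{k}'(0)\to u_{+}$ and $\delta_{k}'(\ell_{k})\to u_{-}$.

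\emph{Step 2: straightening to a true geodesic from $x$ to $y$ — the main obstacle.} Concatenating $\gamma_{v_{+}}|[0,T]$, $\delta_{k}$, $\gamma^{-}|[-T,0]$ produces a path from $x$ to $y$ with two $C^{1}$-breaks whose sizes tend to $0$; since $S=S^{2}$ is simply connected we cannot appeal to uniqueness of geodesics in a homotopy class, so the true geodesic must be produced by a shooting/degree argument. For $v\in T_{x}^{1}S$ on the side of $v_{+}$ on which $\gamma^{-}$ peels away from $\text{tr}(\rho)$, convexity of $\tau_{2}$ forces $\gamma_{v}$ to enter $F(\epsilon_{0})$ near $p_{+}$, shadow $\text{tr}(\rho)$ on the $\tau_{2}>0$ side for a time that tends to $\infty$ as $v\to v_{+}$, and then leave $F(\epsilon_{0})$ transversally through $\{\tau_{2}=\epsilon_{0}\}$; let $W_{k}$ be the arc of such $v$ whose exit point has $\tau_{1}$-coordinate in $[kL,(k+1)L)$, so the exit time $\Theta(v)$ is smooth on $W_{k}$ and the exit basepoint sweeps the circle $\{\tau_{2}=\epsilon_{0}\}$ once. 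The crucial estimate is that, as $k\to\infty$, the exit data $(\gamma_{v}(\Theta(v)),\gamma_{v}'(\Theta(v)))$ for $v\in W_{k}$ converges, uniformly and in $C^{1}$ in the exit basepoint $q$, to the exit data of the geodesic through $q$ that is backward-asymptotic to $\widehat{\rho}$; this exponential-shadowing fact is furnished by the transversal hyperbolicity of the orbit of $\rho$, quantified by the cone and Lyapunov estimates of Sections \ref{sec:Cone Fields} and \ref{sec:LyapunovLineFields} (cf.\ Lemma \ref{lem:StableCurveContraction} and Corollary \ref{cor:UnstableCurveExpansion}). Writing $w_{\infty}(q)$ for this limiting exit velocity, the reversal of $\gamma^{-}|[-T,0]$ leaves $F(\epsilon_{0})$ at some $q^{-}$ with velocity $w^{-}=w_{\infty}(q^{-})$ and reaches $y$ after bounded additional time $\tau^{-}$, and the map $(q,\tau)\mapsto\gamma_{w_{\infty}(q)}(\tau)$ is a local diffeomorphism at $(q^{-},\tau^{-})$: its partial derivative in $q$ is a perpendicular Jacobi field along $\gamma^{-}$ of strong-unstable type, hence nowhere zero and transverse to $\gamma^{-}{}'$ (here one uses Lemma \ref{lem:DisjointCones} and Lemma \ref{lem:LineField}). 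Therefore, for all large $k$, the map $(v,\tau)\mapsto\gamma_{v}(\Theta(v)+\tau)$ on a small neighborhood in $W_{k}\times\mathbb{R}$ is $C^{1}$-close to a local diffeomorphism onto a neighborhood of $y$, so $y$ lies in its image: there are $v_{k}\to v_{+}$ and $L_{n}\to\infty$ with $\gamma_{v_{k}}(L_{n})=y$, and we set $\gamma_{n}:=\gamma_{v_{k}}|[0,L_{n}]$. (If instead $\gamma^{-}|(-\infty,0]$ never leaves $F(\epsilon_{0})$, a simpler one-dimensional intermediate value argument for $v\mapsto\tau_{2}(\gamma_{v}(\cdot))$ at the appropriate $\tau_{1}$-level replaces this step.)

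\emph{Step 3: verification.} Since $v_{k}\to v_{+}$, for $n$ large the arcs $\gamma_{n}|[0,T]$ and $\gamma_{n}|[L_{n}-T,L_{n}]$ are $C^{1}$-close to $\gamma_{v_{+}}|[0,T]$ and to the reversal of $\gamma_{v_{-}}|[0,T]$, so $\gamma_{n}(T)$ and $\gamma_{n}(L_{n}-T)$ lie in $\{0<\tau_{2}<\epsilon/2\}$; then $\gamma_{n}|[T,L_{n}-T]$ is a geodesic with endpoints in the convex set $\{0<\tau_{2}<\epsilon_{0}\}$, hence stays there, and by convexity of $\tau_{2}$ stays in $\{0<\tau_{2}<\epsilon/2\}\subset\{0<\tau_{2}<\epsilon\}$; this gives the required lift $\widetilde{\gamma}_{n}|[T,L_{n}-T]$ with $0<\tau_{2}(\widetilde{\gamma}_{n}(t))<\epsilon$, and a fortiori $\text{dist}(\gamma_{n}(t),\text{tr}(\rho))<\epsilon$ for $t\in[T,L_{n}-T]$, so $T(\epsilon):=T(\epsilon/2)$ and $N(\epsilon):=$ the threshold on $k$ work. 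Finally $L_{n}\to\infty$, and since consecutive lengths differ by about $L$ the $\gamma_{n}$ are infinitely many distinct geodesics after re-indexing. The heart of the proof is Step 2: proving the uniform $C^{1}$ exponential-shadowing estimate and running the ensuing two-dimensional intermediate value (degree) argument so that $\gamma_{n}$ reaches $y$ exactly.
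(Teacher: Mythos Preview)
Your overall plan—use the asymptotic rays from Proposition~\ref{pro:Asymptotic} at $x$ and $y$ and splice them near $\rho$—matches the paper's, but your splicing mechanism has a genuine gap. In Step~2 you shoot from $x$, exit the tube at $q\in\{\tau_2=\epsilon_0\}$, and try to hit $y$; for this you need $(q,\tau)\mapsto\gamma_{w_\infty(q)}(\tau)$ to be a local diffeomorphism at $(q^-,\tau^-)$, i.e.\ the perpendicular Jacobi field $J_\perp$ along $\gamma^-$ with $(J_\perp,J_\perp')\in E^u$ must have $J_\perp\ne 0$ at $y$. But Lemmas~\ref{lem:DisjointCones} and~\ref{lem:LineField} give only $(J_\perp,J_\perp')\in\interior\mathcal K^u$, which ensures $(J_\perp,J_\perp')\ne(0,0)$ but \emph{not} $J_\perp\ne0$: when the basepoint lies inside a cap, $\mathcal K^u_w$ is the push-forward of $\mathcal K^+$ from the entry into $\mathcal D$ (Definition~\ref{def:UnstableConeFields}) and can contain the vertical line $\{H=0\}$ in its interior—this is precisely the focusing responsible for the conjugate points inside caps noted in the introduction—so $E^u_w=\{H=0\}$ is not excluded. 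Thus if $y\in\cup_i\mathcal C_i$ the local-diffeomorphism step can fail, and your degree argument does not close as written.

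The paper avoids this by doing the matching \emph{inside} the negatively curved tube rather than at $y$, and by varying the initial direction at \emph{both} endpoints. It takes short arcs $\sigma_x\subset T_x^1S$, $\sigma_y\subset T_y^1S$ through $v_x,v_y$, flows both forward by a large time $T_n$ into $\widetilde F(\epsilon_n)$, and extends each flowed arc by geodesics to obtain two geodesic foliations of overlapping regions there, with unit tangent fields $X_n,Y_n$; a one-dimensional intermediate value argument along a single vertical Fermi segment then locates a point where $X_n=-Y_n$, and the two leaves through that point glue smoothly to give $\gamma_n$. Because the curvature is negative in the tube there are no focal points, so the foliations are honest foliations and no transversality claim at $y$ is ever needed. (Your $\delta_k$ in Step~1 are in the right spirit but play no role in your actual construction in Step~2.)
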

\begin{proof}
Let $x,y\in S,$ and for $0<\epsilon\le\epsilon_{0},$ let $F(\epsilon),$
$\widehat{F}(\epsilon),$ $\widetilde{F}(\epsilon)$ be the closures
of the $\epsilon$-tubular neighborhoods of $\rho,$ $\widehat{\rho},$
$\widetilde{\rho},$ respectively, as defined in Section \ref{sec:TubularNeighborhoods}.
Suppose we are given a choice of Fermi coordinates $(\tau_{1},\tau_{2})$
along $\widetilde{\rho}$ on $\widetilde{F}(\epsilon_{0}).$ Let $(\epsilon_{n})_{n=1,2,\dots}$
be any sequence with $\epsilon_{n}\downarrow0$ and $\epsilon_{1}<\epsilon_{0}.$
We will construct a sequence of geodesics $\gamma_{n}:[0,L_{n}]\to S$,
$n=1,2,\dots,$ from $x$ to $y$ and sequences $(T_{n})_{n=1,2,\dots}$
with $0<T_{n}<L_{n}/2,$ $T_{n}\uparrow\infty,$ such that\[
\text{dist}(\gamma_{n}(t),\text{\ tr}(\rho))<\epsilon_{m},\text{\ for\ }t\in[T_{m},L_{n}-T_{m}]\text{\ and\ }n\ge m\ge1.\]
 Moreover, we will show that there is a lift $\widetilde{\gamma}_{n}$
of $\gamma_{n}|[T_{1},L_{n}-T_{1}]$ to $\widetilde{F}(\epsilon_{0})$
such that \begin{equation}
0<\tau_{2}(\widetilde{\gamma}_{n}(t))<\epsilon_{m},\text{\ for\ }t\in[T_{m},L_{n}-T_{m}]\text{\ and\ }n\ge m\ge1.\label{eq:nmBGproperty}\end{equation}
 This will imply the conclusion of the proposition, because for any
given $\epsilon>0,$ we may choose $m$ such that $\epsilon_{m}<\epsilon,$
and let $N(\epsilon)=m$ and $T(\epsilon)=T_{m}.$

By Proposition \ref{pro:Asymptotic}, there exist vectors $v_{x}\in T_{x}^{1}S,$
$v_{y}\in T_{y}^{1}S$ such that $\gamma_{v_{x}}$ and $\gamma_{v_{y}}$
are asymptotic to $\rho$ and $-\rho,$ respectively, and for $T$
sufficiently large, the second Fermi coordinate of the lifts of $\gamma_{v_{x}}|[T,\infty)$
and $\gamma_{v_{y}}|[T,\infty)$ to $\widetilde{F}(\epsilon_{0})$
is always positive. Let $\sigma_{x}:[0,a_{0}]\to T_{x}^{1}S$ and
$\sigma_{y}:[0,b_{0}]\to T_{y}^{1}S,$ for some $a_{0},b_{0}>0,$
be one-to-one regular $C^{1}$ curves with $\sigma_{x}(0)=v_{x},$
$\sigma_{y}(0)=v_{y},$ such that for all sufficiently large $t,$
the derivative with respect to $s$ at $s=0$ of the curve of basepoints
of $\varphi^{t}(\sigma_{x}(s))$ has a positive component in the $\partial/\partial\tau_{2}$
direction, and similarly for $\varphi^{t}(\sigma_{y}(s)).$ If $x\in\cup_{i=1}^{q}\mathcal{D}_{i},$
then $v_{x}$ and the curve $\sigma_{x}$ can be chosen so that $\sigma_{x}(s)$
is approximately in the radial direction for all $s\in[0,a_{0}],$
and similarly if $y\in\cup_{i=1}^{q}\mathcal{D}_{i}.$ Thus the curves
$\varphi^{t}\sigma_{x}$ and $\varphi^{t}\sigma_{y}$ are approximately
unstable for $t\ge R,$ where $R$ is as in Lemma \ref{lem:H=00003D0}.

\begin{figure}[htbp]\begin{center}
\begin{picture}(0,0)%
\includegraphics{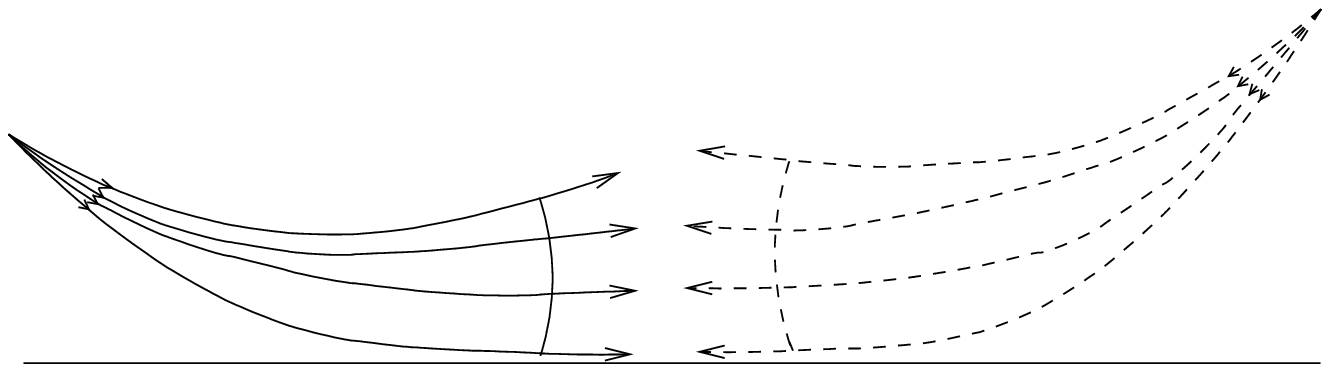}%
\end{picture}%
\setlength{\unitlength}{3947sp}
\begingroup\makeatletter\ifx\SetFigFont\undefined%
\gdef\SetFigFont#1#2#3#4#5{%
  \reset@font\fontsize{#1}{#2pt}%
  \fontfamily{#3}\fontseries{#4}\fontshape{#5}%
  \selectfont}%
\fi\endgroup%
\begin{picture}(6676,2310)(-265,-1804) \put(3622,-358){\makebox(0,0)[lb]{\smash{{\SetFigFont{12}{14.4}{\familydefault}{\mddefault}{\updefault}{$\varphi^{T_n}(\sigma_y)$}%
}}}}

\put(-190,-356){\makebox(0,0)[lb]{\smash{{\SetFigFont{12}{14.4}{\familydefault}{\mddefault}{\updefault}{$x$}%
}}}} \put(424,-458){\makebox(0,0)[lb]{\smash{{\SetFigFont{12}{14.4}{\familydefault}{\mddefault}{\updefault}{$\sigma_x$}%
}}}} \put(550,-1153){\makebox(0,0)[lb]{\smash{{\SetFigFont{12}{14.4}{\familydefault}{\mddefault}{\updefault}{$\gamma_{v_x}$}%
}}}} \put(244,-1740){\makebox(0,0)[lb]{\smash{{\SetFigFont{12}{14.4}{\familydefault}{\mddefault}{\updefault}{$\rho$}%
}}}} \put(2317,-441){\makebox(0,0)[lb]{\smash{{\SetFigFont{12}{14.4}{\familydefault}{\mddefault}{\updefault}{$\varphi^{T_n}(\sigma_x)$}%
}}}} \put(6396,229){\makebox(0,0)[lb]{\smash{{\SetFigFont{12}{14.4}{\familydefault}{\mddefault}{\updefault}{$y$}%
}}}} \put(5670,15){\makebox(0,0)[lb]{\smash{{\SetFigFont{12}{14.4}{\familydefault}{\mddefault}{\updefault}{$\sigma_y$}%
}}}} \put(5702,-871){\makebox(0,0)[lb]{\smash{{\SetFigFont{12}{14.4}{\familydefault}{\mddefault}{\updefault}{$\gamma_{v_y}$}%
}}}} \end{picture}%
\caption{Matching a geodesic starting at $x$ with a geodesic starting at $y$.}
\label{figure:1} \end{center} \end{figure}

Let $T_{0}=R,$ and let $a_{0}$ and $b_{0}$ be as above. We will
choose $T_{n},$ $\overline{t}_{x,n},$ $\overline{t}_{y,n},$ $a_{n},$
and $b_{n}$ inductively so that for $n=1,2,\dots,$ we have the following:
$T_{n}>T_{n-1},$ $T_{n+1}>\max(\overline{t}_{x,n},\overline{t}_{y,n}),$
$0<a_{n}<a_{n-1},$ and $0<b_{n}<b_{n-1}.$ Further conditions on
these parameters will be imposed below. We require $T_{n}$ to be
sufficiently large so that $\gamma_{v_{x}}|[T_{n},\infty),$ $\gamma_{v_{y}}|[T_{n},\infty)$
have lifts $\widetilde{\gamma}_{x,n},$ $\widetilde{\gamma}_{y,n},$
respectively, in $\widetilde{F}(\epsilon_{0})$ so that dist$(\widetilde{\gamma}_{x,n}(t),\text{tr}(\widetilde{\rho}))<\epsilon_{n}/2$
and dist$(\widetilde{\gamma}_{y,n}(t),\text{tr}(\widetilde{\rho}))<\epsilon_{n}/2$
for $t\ge T_{n}.$ We let $\widetilde{\sigma}_{x,n},$ $\widetilde{\sigma}_{y,n}$
be lifts to $\widetilde{F}(\epsilon_{0})$ of $\varphi^{T_{n}}(\sigma_{x}|[0,a_{n}]),$
$\varphi^{T_{n}}(\sigma_{y}|[0,b_{n}])$, respectively, where $a_{n},$
$b_{n}$ are chosen so that the curve of basepoints of $\widetilde{\sigma}_{x,n}(s),$
for $0\le s\le a_{n},$ and the curve of basepoints of $\widetilde{\sigma}_{y,n}(s),$
for $0\le s\le b_{n},$ have length less than $\epsilon_{n}/2.$ Later
we will impose an additional condition relating the choices of the
lift $\widetilde{\sigma}_{x,n}$ and the lift $\widetilde{\sigma}_{y,n}.$
We let $\widetilde{\gamma}_{x,n}$ and $\widetilde{\gamma}_{y,n}$
be lifts of $\gamma_{v_{x}}|[T_{n},\infty)$ and $\gamma_{v_{y}}|[T_{n},\infty)$
chosen so that $\widetilde{\gamma}_{x,n}'(T_{n})=\widetilde{\sigma}_{x,n}(0)$
and $\widetilde{\gamma}_{y,n}'(T_{n})=\widetilde{\sigma}_{y,n}(0).$
We will show that there exist $\bar{t}_{x,n},\bar{t}_{y,n}>T_{n}$
and $\bar{a}_{n}\in(0,a_{n}),$ $\bar{b}_{n}\in(0,b_{n})$ such that
$\gamma_{\sigma_{x}(\bar{a}_{n})}|[0,\bar{t}_{x,n}]$ joins smoothly
to $-\big(\gamma_{\sigma_{y}(\bar{b}_{n})}|[0,\bar{t}_{y,n}]\big)$
at $\gamma_{\sigma_{x}(\bar{a}_{n})}(\bar{t}_{x,n})=\gamma_{\sigma_{y}(\bar{b}_{n})}(\bar{t}_{y,n}),$
to form a geodesic $\gamma_{n}$ from $x$ to $y$ of length $L_{n}>2T_{n}.$
Our construction will be such that dist$(\gamma_{n}(t),\text{tr}(\widetilde{\rho}))<\epsilon_{m}$
for $t\in[T_{m},L_{n}-T_{m}],$ for $1\le m\le n.$

\begin{figure}[htbp]\begin{center}
\begin{picture}(0,0)%
\includegraphics{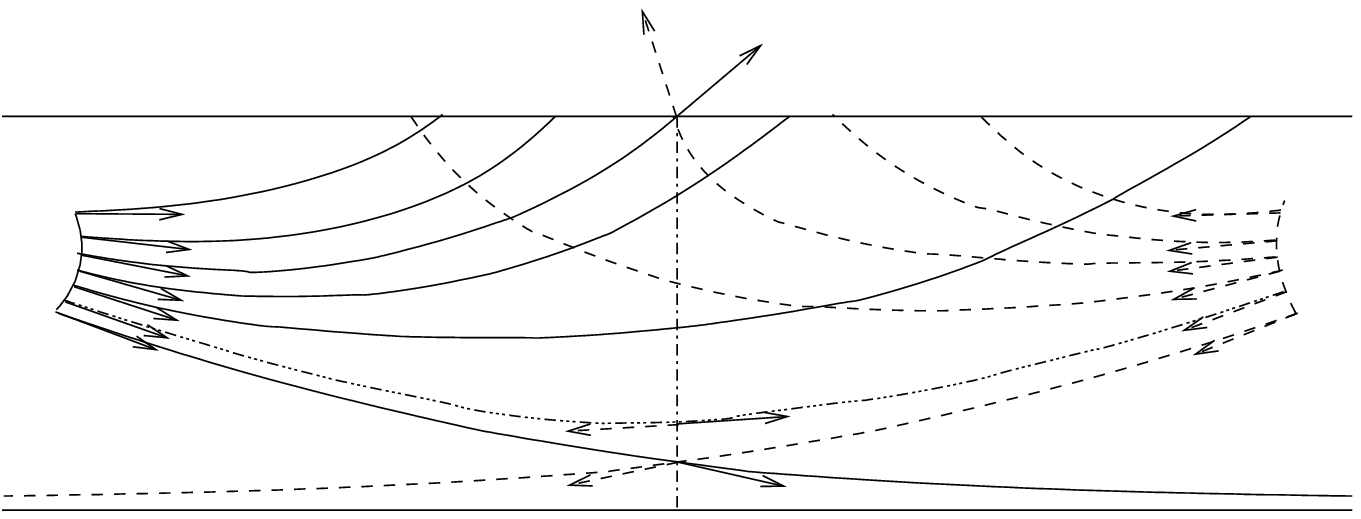}%
\end{picture}%
\setlength{\unitlength}{3947sp}%
\begingroup\makeatletter\ifx\SetFigFont\undefined%
\gdef\SetFigFont#1#2#3#4#5{%
  \reset@font\fontsize{#1}{#2pt}%
  \fontfamily{#3}\fontseries{#4}\fontshape{#5}%
  \selectfont}%
\fi\endgroup%
\begin{picture}(10124,3129)(-511,-4092) \put(2050,-3200){\makebox(0,0)[lb]{\smash{{\SetFigFont{10}{14.4}{\familydefault}{\mddefault}{\updefault}{$Y_n$}%
}}}} \put(3150,-1400){\makebox(0,0)[lb]{\smash{{\SetFigFont{10}{14.4}{\familydefault}{\mddefault}{\updefault}{$X_n$}%
}}}} \put(2330,-1300){\makebox(0,0)[lb]{\smash{{\SetFigFont{10}{14.4}{\familydefault}{\mddefault}{\updefault}{$Y_n$}%
}}}} \put(100,-3900){\makebox(0,0)[lb]{\smash{{\SetFigFont{10}{14.4}{\familydefault}{\mddefault}{\updefault}{$\widetilde{\rho}$}%
}}}} \put(100,-1700){\makebox(0,0)[lb]{\smash{{\SetFigFont{10}{14.4}{\familydefault}{\mddefault}{\updefault}{$\tau_2=\epsilon_n$}%
}}}} \put(-600,-2200){\makebox(0,0)[lb]{\smash{{\SetFigFont{10}{14.4}{\familydefault}{\mddefault}{\updefault}{$\widetilde{\sigma}_{x,n}(a_n)$}%
}}}} \put(-600,-2500){\makebox(0,0)[lb]{\smash{{\SetFigFont{10}{14.4}{\familydefault}{\mddefault}{\updefault}{$\widetilde{\sigma}_{x,n}$}%
}}}} \put(-1500,-3000){\makebox(0,0)[lb]{\smash{{\SetFigFont{10}{14.4}{\familydefault}{\mddefault}{\updefault}{$\widetilde{\sigma}_{x,n}(0)=\widetilde{\gamma}_{x,n}'(T_n)$}%
}}}} \put(100,-3509){\makebox(0,0)[lb]{\smash{{\SetFigFont{10}{14.4}{\familydefault}{\mddefault}{\updefault}{$\widetilde{\gamma}_{y,n}$}%
}}}} \put(5000,-3509){\makebox(0,0)[lb]{\smash{{\SetFigFont{10}{14.4}{\familydefault}{\mddefault}{\updefault}{$\widetilde{\gamma}_{x,n}$}%
}}}} \put(400,-2050){\makebox(0,0)[lb]{\smash{{\SetFigFont{10}{14.4}{\familydefault}{\mddefault}{\updefault}{$\gamma_{\tilde{\sigma}_{x,n}(a_n)}$}%
}}}} \put(1600,-3100){\makebox(0,0)[lb]{\smash{{\SetFigFont{10}{14.4}{\familydefault}{\mddefault}{\updefault}{$\widetilde{\beta}_n$}%
}}}} \put(2450,-3900){\makebox(0,0)[lb]{\smash{{\SetFigFont{10}{14.4}{\familydefault}{\mddefault}{\updefault}{$\tau_1=\tau_{1,n}$}%
}}}} \put(5500,-2099){\makebox(0,0)[lb]{\smash{{\SetFigFont{10}{14.4}{\familydefault}{\mddefault}{\updefault}{$\widetilde{\sigma}_{y,n}(b_n)$}%
}}}} \put(5800,-2500){\makebox(0,0)[lb]{\smash{{\SetFigFont{10}{14.4}{\familydefault}{\mddefault}{\updefault}{$\widetilde{\sigma}_{y,n}$}%
}}}} \put(5000,-3200){\makebox(0,0)[lb]{\smash{{\SetFigFont{10}{14.4}{\familydefault}{\mddefault}{\updefault}{$\widetilde{\sigma}_{y,n}(0)=\widetilde{\gamma}_{y,n}'(T_n)$}%
}}}} \put(3300,-3164){\makebox(0,0)[lb]{\smash{{\SetFigFont{10}{14.4}{\familydefault}{\mddefault}{\updefault}{$X_n$}%
}}}} \put(3300,-3654){\makebox(0,0)[lb]{\smash{{\SetFigFont{10}{14.4}{\familydefault}{\mddefault}{\updefault}{$X_n$}%
}}}} \put(2050,-3500){\makebox(0,0)[lb]{\smash{{\SetFigFont{10}{14.4}{\familydefault}{\mddefault}{\updefault}{$Y_n$}%
}}}} \end{picture}%
\caption{Foliations of the regions ${\mathcal{R}}_{x,n}$ and ${\mathcal{R}}_{y,n}$}  \label{figure:3} \end{center} \end{figure}

Since $\sigma_{x}$ and $\sigma_{y}$ and their images under $\varphi^{t},$
for $t\ge R,$ are approximately unstable, the curvatures of $\widetilde{\sigma}_{x,n}$
and $\widetilde{\sigma}_{y,n}$ are positive. For $t\ge T_{n},$ let
$E_{x,n}(t)$ be a unit normal field along $\widetilde{\gamma}_{x,n}(t)$
chosen so that $E_{x,n}(T_{n})$ is in the same direction as the derivative
at $s=0$ of the curve of basepoints of $\widetilde{\sigma}_{x,n}(s).$
Let $J_{x,n}(t)=j_{x,n}(t)E_{x,n}(t)$ be the Jacobi field $J_{x,n}(t)=(d/ds)|_{s=0}\gamma_{\tilde{\sigma}_{x,n}(s)}(t-T_{n}),$
for $t\ge T_{n}.$ Since $j_{x,n}'(T_{n})/j_{x,n}(T_{n})$ is equal
to the curvature at $s=0$ of the curve $s\mapsto\gamma_{\tilde{\sigma}_{x,n}(s)}(0)$
(as explained in Section \ref{sec:Cone Fields}) and $j_{x,n}(T_{n})>0,$
we have $j_{n,x}'(T_{n})>0.$ Therefore, if we extend the geodesic
$\widetilde{\gamma}_{x,n}|[T_{n},\infty)$ for $t<T_{n}$ up to the
point where $\widetilde{\gamma}_{x,n}$ exits $\widetilde{F}(\epsilon_{0})$
and let tr$(\widetilde{\gamma}_{x,n})$ denote the trace of this extended
geodesic, then for $a_{n}$ sufficiently small and $s\in(0,a_{n}],$
the distance between $\gamma_{\tilde{\sigma}_{x,n}(s)}(t)$ and tr$(\gamma_{\tilde{\sigma}_{x,n}(0)})$
is an increasing function of $t$ near $t=0.$ By the convexity of
this function, it follows that $\gamma_{\tilde{\sigma}_{x,n}(s)}(t)$
must leave $\widetilde{F}(\epsilon_{n})$ at some $t_{x,n,s}>0.$

The geodesic $\widetilde{\gamma}_{x,n}(t),$ $t\ge T_{n},$ and the
geodesics $\gamma_{\tilde{\sigma}_{x,n}(s)}(t),$ $0\le t\le t_{x,n,s},$
$0<s\le a_{n},$ form a foliation of a region $\mathcal{R}_{x,n}$
in $\widetilde{F}(\epsilon_{n})$ bounded on four sides by $\widetilde{\gamma}_{x,n}(t),$
$t\ge T_{n};$ the curve of basepoints of $\widetilde{\sigma}_{x,n}(s),$
$0\le s\le a_{n};$ $\gamma_{\tilde{\sigma}_{x,n}(a_{n})}(t),$ $0\le t\le t_{x,n,a_{n}};$
and $\{(\tau_{1},\tau_{2})\in\widetilde{F}(\epsilon_{n}):\tau_{1}\ge\tau_{x,n},\tau_{2}=\epsilon_{n}\},$
where $\tau_{x,n}$ is the $\tau_{1}$-coordinate of $\gamma_{\tilde{\sigma}_{x,n}(a_{n})}$
at the time it leaves $\widetilde{F}(\epsilon_{n}).$ The geodesics
in this foliation cannot intersect each other, because the curvature
is negative in $\widetilde{F}(\epsilon_{n}),$ which implies that
there are no focal points in $\widetilde{F}(\epsilon_{n}).$ Let $X_{n}$
be the unit vector field on $\mathcal{R}_{x,n}$ consisting of the
tangent vectors to the geodesics forming this foliation. There exists
$\overline{\tau}_{x,n}$ such that the region $\mathcal{R}_{x,n}$
contains all points in $\widetilde{F}(\epsilon_{n})$ with $\tau_{1}\ge\overline{\tau}_{x,n}$
that lie above tr$(\widetilde{\gamma}_{x,n}).$ Similarly, we may
construct a vector field $Y_{n}$ on a region $\mathcal{R}_{y,n}$
in $\widetilde{F}(\epsilon_{n})$ foliated by the geodesic $\widetilde{\gamma}_{y,n}(t),$
$t\ge T_{n},$ and the geodesics $\gamma_{\tilde{\sigma}_{y,n}(s)}(t),$
$0\le t<t_{y,n,s},$ $0<s\le b_{n},$ where $t_{y,n,s}$ is the time
at which $\gamma_{\tilde{\sigma}_{y,n}(s)}$ exits $\widetilde{F}(\epsilon_{n}).$
There exists $\overline{\tau}_{y,n}$ such that the region $\mathcal{R}_{y,n}$
contains all points in $\widetilde{F}(\epsilon_{n})$ with $\tau_{1}\le\overline{\tau}_{y,n}$
that lie above tr$(\widetilde{\gamma}_{y,n}).$ Let $T_{x,n}>T_{n}$
and $T_{y,n}>T_{n}$ be times at which $\tau_{1}(\widetilde{\gamma}_{x,n}(T_{x,n}))\ge\overline{\tau}_{x,n}$
and $\tau_{1}(\widetilde{\gamma}_{y,n}(T_{y,n}))\ge\overline{\tau}_{y,n}.$

Consider the projections $\widehat{\gamma}_{x,n},$ $\widehat{\gamma}_{y,n}$
of $\widetilde{\gamma}_{x,n},$ $\widetilde{\gamma}_{y,n},$ respectively,
to $\widehat{F}(\epsilon_{n}).$ Since $\widehat{\gamma}_{x,n}$ and
$\widehat{\gamma}_{y,n}$ approach $\widehat{\rho}$ from the same
side $(\tau_{2}>0),$ but with opposite orientations, they intersect
infinitely often. Thus there exist $t_{x,n}>T_{x,n}$ and $t_{y,n}>T_{y,n}$
such that $\widehat{\gamma}_{x,n}(t_{x,n})=\widehat{\gamma}_{y,n}(t_{y,n}).$
We now impose the additional condition that the lifts $\widetilde{\sigma}_{x,n}$
and $\widetilde{\sigma}_{y,n}$ be chosen so that $\widetilde{\gamma}_{x,n}(t_{x,n})=\widetilde{\gamma}_{y,n}(t_{y,n}).$

Let $(\tau_{1,n},\tau_{2,n})$ be the $(\tau_{1},\tau_{2})$ coordinates
of $\widetilde{\gamma}_{x,n}(t_{x,n}).$ At $(\tau_{1,n},\tau_{2,n})$
both $X_{n}$ and $Y_{n}$ have negative components in the $\partial/\partial\tau_{2}$
direction, and $X_{n}$ has a positive component in the $\partial/\partial\tau_{1}$
direction, while $Y_{n}$ has a negative component in the $\partial/\partial\tau_{1}$
direction. At $(\tau_{1,n},\epsilon_{n})$ both $X_{n}$ and $Y_{n}$
have positive components in the $\partial/\partial\tau_{2}$ direction,
and $X_{n}$ still has a positive component in the $\partial/\partial\tau_{1}$
direction, while $Y_{n}$ still has a negative component in the $\partial/\partial\tau_{1}$
direction. Therefore the Euclidean angle from $X_{n}$ to $Y_{n}$
measured in the counterclockwise direction changes from being greater
than $\pi$ to being less than $\pi$ along the vertical segment $\tau_{1}=\tau_{1,n},$
$\tau_{2,n}\le\tau_{2}\le\epsilon_{n}.$ By the intermediate value
theorem, there is a point $p_{n}$ along this segment such that the
angle from $X_{n}$ to $Y_{n}$ is $\pi.$ We let $\bar{a}_{n}\in(0,a_{n}),$
$\bar{b}_{n}\in(0,b_{n}),$ and $\bar{t}_{x,n},\bar{t}_{y,n}>T_{n}$
be such that $\gamma_{\tilde{\sigma}_{x}(\bar{a}_{n})}(\bar{t}_{x,n})=p_{n}=\gamma_{\tilde{\sigma}_{y}(\bar{b}_{n})}(\bar{t}_{y,n})$
and $\gamma'_{\tilde{\sigma}_{x,n}(\bar{a}_{n})}(\bar{t}_{x,n})=-\big(\gamma'_{\tilde{\sigma}_{y,n}(\bar{b}_{n})}(\bar{t}_{y,n})\big).$
We join $\gamma_{\tilde{\sigma}(\overline{a}_{n})}$ and $-\gamma_{\tilde{\sigma}(\overline{b}_{n})}$
at $p_{n}$ to form a geodesic $\widetilde{\beta}_{n}$ from the basepoint
of $\widetilde{\sigma}(\bar{a}_{n})$ to the basepoint of $\widetilde{\sigma}(\bar{b}_{n})$
in $\widetilde{F}(\epsilon_{n}),$ and we let $\beta_{n}$ be the
image of $\widetilde{\beta}_{n}$ under the projection from $\widetilde{F}(\epsilon_{n})$
to $F(\epsilon_{n}).$ The geodesic $\gamma_{n}:[0,L_{n}]\to S$ from
$x$ to $y$ is defined to be the concatenation of $\gamma_{\sigma_{x}(\bar{a}_{n})}|[0,T_{n}],$
$\beta_{n},$ and $-\big(\gamma_{\sigma_{y}(\bar{b}_{n})}|[0,T_{n}]\big).$
It follows from the construction that the second Fermi coordinate
of $\widetilde{\beta}_{n}$ is everywhere positive and less than $\epsilon_{n},$
as required. Moreover, if $1\le m<n,$ then $\widetilde{\beta}_{n}$
can be extended by joining it to lifts to $\widetilde{F}(\epsilon_{m})$
of $\gamma_{\sigma_{x}(\bar{a}_{n})}|[T_{m},T_{n}]$ and $-\big(\gamma_{\sigma_{y}(\bar{b}_{n})}|[T_{m},T_{n}]\big).$
For this extension of $\widetilde{\beta}_{n},$ the second Fermi coordinate
is everywhere positive and less than $\epsilon_{m}.$ This implies
(\ref{eq:nmBGproperty}).
\end{proof}
The following theorem is a special case of a theorem of S. \L ojasiewicz
\cite{Lojasiewicz}. (See also Theorem 4.4 in the expository article
\cite{BierstoneMilman}.)
\begin{thm}
\label{thm:Lojasiewicz}Suppose $M$ is a connected real analytic
surface, $K$ is a compact subset of $M,$ and $f:M\to\mathbb{R}$
is a real analytic function. Assume that $f$ does not vanish identically
on $M.$ Then there exists a finite set of points $P\subset M$ and
a set $A$ consisting of the union of finitely many real analytic
curves on $M$, where $P$ and/or $A$ may be empty, such that \[
\{y\in K:f(y)=0\}=K\cap(P\cup A).\]

\begin{prop}
\label{pro:AnalyticNonconcurrence}If $T>0,$ and $(x,z)\in S\times S,$
then there are at most finitely many unit speed geodesics from $x$
to $z$ of length less than or equal to $T.$
\end{prop}
\end{thm}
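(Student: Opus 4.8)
The plan is to reinterpret geodesics from $x$ to $z$ of length at most $T$ as points $w$ of the closed disk $\overline{B}(0,T)\subset T_{x}S$ with $\exp_{x}(w)=z$. Since $(S,h)$ is compact, hence complete, $\exp_{x}$ is defined on all of $T_{x}S$, and since $h$ is real analytic, $\exp_{x}$ is a real analytic map; distinct such $w$ yield distinct geodesics, so it suffices to prove that $A:=\exp_{x}^{-1}(z)\cap\overline{B}(0,T)$ is finite. Suppose not. By compactness $A$ has an accumulation point $w_{0}$, with $\exp_{x}(w_{0})=z$; note $w_{0}\neq 0$, because $\exp_{x}$ is a local diffeomorphism near $0$, so $A$ cannot accumulate there.

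Next I would apply the Łojasiewicz theorem (Theorem \ref{thm:Lojasiewicz}) near $w_{0}$. Since $(S,h)$ is compact, $z$ has positive injectivity radius, so $\operatorname{dist}_{h}(\cdot,z)^{2}$ is real analytic on a neighborhood of $z$; hence $f(w):=\operatorname{dist}_{h}(\exp_{x}(w),z)^{2}$ is real analytic on a neighborhood $V$ of $w_{0}$, with $f\geq 0$ and $f^{-1}(0)=\exp_{x}^{-1}(z)\cap V$. As $d(\exp_{x})_{w}$ always has the radial direction in its image, $\exp_{x}$ is nonconstant on every open set, so $f\not\equiv 0$ near $w_{0}$. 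Applying Theorem \ref{thm:Lojasiewicz} on a small closed ball $K\ni w_{0}$ in $V$ gives $f^{-1}(0)\cap K=K\cap(P\cup\mathcal B)$ with $P$ finite and $\mathcal B$ a finite union of real analytic curves. Infinitely many points of $A$ lie in $K$ but only finitely many in $P$, so infinitely many lie on one analytic curve; thus there is a regular real analytic arc $c:(-\delta,\delta)\to T_{x}S$, $c(0)=w_{0}$, $c'(s)\neq 0$, with $\exp_{x}(c(s))\equiv z$.

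I would then extract a rigid geometric configuration. Write $c(s)=\ell(s)\,v(s)$ with $v(s)=c(s)/|c(s)|\in T^{1}_{x}S$ (legitimate since $w_{0}\neq 0$), so $\gamma_{v(s)}(\ell(s))=z$. The variation field $\widehat J_{s}(t)=\frac{\partial}{\partial s}\gamma_{v(s)}(t)$ is a Jacobi field with $\widehat J_{s}(0)=0$; since $|v(s)|\equiv 1$ one gets $\langle \widehat J_{s}'(0),v(s)\rangle=0$, so $\widehat J_{s}$ is everywhere orthogonal to $\gamma_{v(s)}$. Differentiating $\gamma_{v(s)}(\ell(s))\equiv z$ gives $\widehat J_{s}(\ell(s))+\ell'(s)\gamma_{v(s)}'(\ell(s))=0$, whose first term is orthogonal and second tangential to the geodesic, so $\ell$ is a constant $\ell\leq T$ and $\widehat J_{s}(\ell)=0$. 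Thus $z$ is conjugate to $x$ along every $\gamma_{v(s)}|[0,\ell]$, and, writing $\mathcal V(w)=\{\xi\in\mathcal P(w):\xi_{H}=0\}$ for the line $H=0$, the vector $v'(s)$ lies in $\mathcal V(v(s))$ (constant basepoint) while $d\varphi^{\ell}_{v(s)}v'(s)=\frac{\partial}{\partial s}\gamma_{v(s)}'(\ell)$ has $H$-component $\widehat J_{s}(\ell)=0$; that is, $d\varphi^{\ell}_{v(s)}\bigl(\mathcal V(v(s))\bigr)=\mathcal V(\varphi^{\ell}v(s))$. Here $v$ is nonconstant — otherwise the immersed geodesic $\gamma_{v(s_{0})}$ would meet $z$ along a whole parameter interval — and being analytic and nonconstant it has $v'(s)\neq 0$ for $s$ outside a discrete set, so I fix such an $s$ and set $w=v(s)$, $w'=\varphi^{\ell}w$.

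Finally I would derive a contradiction from the cone field structure. On one hand, by \eqref{eq:CapConeInvariance} one always has $\mathcal K^{u}_{w'}\subset\mathcal K^{+}_{w'}$, and in $H,V$ coordinates $\mathcal K^{+}$ is the union of the closed first and third quadrants, which meets the $V$-axis only at $0$; hence $\mathcal V(w')\cap\interior\mathcal K^{u}_{w'}=\{0\}$, so $\mathcal V(w')\not\subset\interior\mathcal K^{u}_{w'}$. On the other hand I claim $d\varphi^{\ell}_{w}(\mathcal V(w))\subset\interior\mathcal K^{u}_{w'}$, contradicting $d\varphi^{\ell}_{w}(\mathcal V(w))=\mathcal V(w')$. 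To prove the claim: once $d\varphi^{t_{*}}_{w}(\mathcal V(w))\subset\mathcal K^{u}_{\varphi^{t_{*}}w}$ for some $t_{*}\in[0,\ell)$, cone invariance \eqref{eq:UnstableConeInvariance} keeps the image in the unstable cones and strict invariance \eqref{eq:StrictUnstableInvariance} pushes it into the interior as soon as the geodesic visits the open set $T^{1}\mathcal N_{0}$; if $x\notin\bigcup_{i}\interior\mathcal D_{i}$ one may take $t_{*}=0$ since then $\mathcal K^{u}_{w}=\mathcal K^{+}_{w}\supset\mathcal V(w)$, and if $\gamma_{v(s)}|[0,\ell]$ is not contained in a single $\mathcal D_{i}$ then — the $\mathcal D_{i}$ being disjoint closed sets — it meets $\mathcal N_{0}$, which gives the claim (reversing the geodesic and working from $z$ when it is $z$ rather than $x$ that lies outside the disks). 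The remaining case, in which $\gamma_{v(s)}|[0,\ell]$ lies entirely inside one $\mathcal D_{i}$ and therefore must cross the positively curved cap $\mathcal C_{i}$ to acquire a conjugate point, is where the finer cap estimates of Lemma \ref{lem:ImprovedBurnsGerber} and Lemma \ref{lem:H=00003D0} are needed: the entry slope $\theta=\pi/2$ propagates through the cap to a value in $[0,\pi/2]$ and then strictly into $(0,\pi/2)$ in the negatively curved collar, supplying the required $t_{*}$. This uniform case analysis relative to the caps is the main obstacle; real analyticity is essential precisely because Theorem \ref{thm:Lojasiewicz} is what reduces ``infinitely many geodesics'' to this single rigid conjugate configuration, which the non-uniformly hyperbolic cone machinery then excludes.
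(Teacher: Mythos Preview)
Your argument agrees with the paper's up through the point where $\ell(s)$ is shown to be constant, yielding an arc $s\mapsto v(s)$ on the unit circle in $T_{x}S$ with $\exp_{x}(\ell\,v(s))\equiv z$. The gap is in the contradiction step. The assertion that $\mathcal{K}^{u}_{w'}\subset\mathcal{K}^{+}_{w'}$ ``always'' holds is not what \eqref{eq:CapConeInvariance} says: that inclusion is only at the \emph{exit} point of $\mathcal{D}$, and for $z\in\interior\mathcal{C}_{i}$ the unstable cone, carried through positive curvature where $\theta'<0$ for every $\theta$, can rotate out of $\mathcal{K}^{+}_{w'}$. More seriously, in your ``remaining case'' where $\gamma_{v(s)}|[0,\ell]$ lies entirely in one $\mathcal{D}_{i}$, the segment never meets $\mathcal{N}_{0}$, so \eqref{eq:StrictUnstableInvariance} is unavailable, and Lemma~\ref{lem:H=00003D0} requires the initial direction to be approximately radial, which the direction $v(s)$ determined by the accumulation point $w_{0}$ has no reason to be. Since conjugate points genuinely \emph{do} occur along geodesic segments that pass through a cap (this is pointed out explicitly in the Introduction), the vertical-to-vertical configuration $d\varphi^{\ell}(\mathcal V(w))=\mathcal V(w')$ cannot be excluded for a single prescribed direction by the cone machinery alone.

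The paper closes the argument with a step you omit. Since $\exp_{x}$ restricted to the real analytic circle $\{|v|=\ell\}$ is real analytic and equals $z$ on an arc, it equals $z$ on the \emph{entire} circle. Thus every unit speed geodesic from $x$ reaches $z$ at time $\ell$; reversing roles, every geodesic from $z$ reaches $x$ at time $\ell$, so every geodesic from $x$ has conjugate points at times $\ell,2\ell,3\ell,\ldots$. This is contradicted by Proposition~\ref{pro:Asymptotic}, which produces a geodesic from $x$ that is asymptotic to $\rho$ and hence eventually remains in the negative curvature region, where it can have only finitely many conjugate points. The analytic continuation to the full circle is exactly what lets one \emph{choose} a convenient direction rather than being stuck with whatever direction the accumulation hands you; this sidesteps the cap case analysis entirely.
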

\begin{proof}
Suppose the lemma were false. Then there exists an infinite sequence
$\gamma_{n}:[0,T]\to S$ of unit speed geodesics with $\gamma_{n}(0)=x$
and $\gamma_{n}(t_{n})=z$ for some $t_{n}\in[0,T].$ By passing to
a subsequence of $(\gamma_{n})_{n=1,2,\dots}$ and reindexing, we
may assume that $\lim_{n\to\infty}t_{n}=t_{0}\in(0,T]$ and $\lim_{n\to\infty}\gamma_{n}'(0)=v_{0}\in T_{x}^{1}M.$
Let $f:T_{x}S\to\mathbb{R}$ be defined by $f(v)=({\rm dist}(\exp_{x}v,z))^{2}.$
Since $y\mapsto({\rm dist}(y,z))^{2}$ is a real analytic function
in a neighborhood of $z,$ there exists an open disk $M$ about $t_{0}v_{0}$
in $T_{x}S$ such that $f$ restricted to $M$ is real analytic. Let
$K$ be a closed disk about $t_{0}v_{0}$ that is contained in $M.$
Since $f$ vanishes on an infinite subset of $K,$ Theorem \ref{thm:Lojasiewicz}
implies that there exists a non-trivial real analytic curve $\alpha(s),$
$-\delta<s<\delta,$ in $T_{x}M$ such that $f(\alpha(s))=0$ for
all $s\in(-\delta,\delta).$ Consider the variation by (not necessarily
unit speed) geodesics, $s\to\exp(t\alpha(s))$ , where $0\le t\le1$.
These geodesics pass through $x$ when $t=0,$ and they pass through
$z$ when $t=1.$ By the first variation formula for arc length (see,
e.g., \cite{Chavel}), $(d/ds)(|\alpha(s)|)\equiv0.$ Thus there exists
$L>0$ such that $|\alpha(s)|=L$ for all $s\in(-\delta,\delta).$
This implies that $f$ vanishes along an arc of the circle $|v|=L.$
Therefore $f$ vanishes identically on this circle. Thus every unit
speed geodesic starting at $x$ passes through $z$ at time $L.$
We can repeat this argument at $z$ to conclude that every unit speed
geodesic starting at $z$ passes through $x$ at time $L.$ Hence
every unit speed geodesic starting at $x$ is at a point conjugate
to $x$ along that geodesic at times $t=L,2L,3L,\dots$. However,
by Proposition \ref{pro:Asymptotic}, there is a geodesic starting
at $x$ that eventually remains in the negative curvature region,
which implies there are at most finitely many points conjugate to
$x$ along this geodesic. This is a contradiction. \end{proof}
\begin{lem}
\label{lem:AvoidsOmega}Let $(x,y)\in S\times S$ and suppose $\gamma_{n}:[0,L_{n}]\to S$,
$n=1,2,\dots$, is an infinite family of distinct geodesics from $x$
to $y$ as in Proposition \ref{pro:BangertGutkinProperty}. If $\Omega$
is a finite subset of $S,$ then there exists an infinite subsequence
$(\gamma_{n_{k}})_{k=1,2,\dots}$ of $(\gamma_{n})_{n=1,2,\dots}$
such that $\gamma_{n_{k}}((0,L_{n_{k}}))\cap\Omega=\emptyset$ for
$k=1,2,\dots.$
\begin{proof}
Let $\Sigma\subset S$ be the set of self-intersection points of $\rho.$
If $\Sigma\ne\emptyset,$ then there exist $\epsilon_{1}>0$ and $C>1,$
depending on the angles made by $\rho$ at points in $\Sigma,$ such
that if $z$ is in the $\epsilon_{1}$-neighborhood of tr$(\rho),$
and $z$ has at least one representation in Fermi coordinates $(\tau_{1},\tau_{2})$
along $\rho$ with $\tau_{2}\ne0,$ then $z\notin\Sigma$ and \begin{equation}
0<\text{dist}(z,\Sigma)<C|\tau_{2}|.\label{eq:Sigma_distance}\end{equation}
Let $\alpha_{1}=\min\{\text{dist}(z,\text{tr}(\rho)):z\in\Omega\setminus\text{tr}(\rho)\}$
and $\alpha_{2}=\min\{\text{dist}(z,\Sigma):z\in(\Omega\cap\text{tr}(\rho))\setminus\Sigma\}.$
(We define $\min(\emptyset)=\infty.)$ Let $0<\epsilon<\min(\epsilon_{1},\alpha_{1},\alpha_{2}/C),$
and let $T=T(\epsilon)$ and $N=N(\epsilon)$ be as in Proposition
\ref{pro:BangertGutkinProperty}. For $n>N,$ there exists a smooth
choice of the coordinate $\tau_{2}$ along $\gamma_{n}|[T,L_{n}-T]$
such that\[
0<|\tau_{2}(\gamma_{n}(t))|<\epsilon,\]
for $t\in[T,L_{n}-T].$ If a point $\gamma_{n}(t),$ for some $n>N$
and some $t\in[T,L_{n}-T],$ is in tr$(\rho),$ then it is not in
$\Sigma,$ and it is closer to $\Sigma$ than any point in $(\Omega\cap\text{tr}(\rho))\setminus\Sigma.$
If it is not in tr$(\rho),$ then it is closer to tr$(\rho)$ than
any point in $\Omega\setminus\text{tr}(\rho).$ Therefore $\gamma_{n}([T,L_{n}-T])\cap\Omega=\emptyset,$
for $n>N.$ By applying Proposition \ref{pro:AnalyticNonconcurrence}
to points of the form $(x,z)$ or $(y,z),$ where $z\in\Omega,$ we
see that there exist infinitely many $n>N$ such that $\gamma_{n}\big((0,T)\cup(L_{n}-T,L_{n})\big)\cap\Omega=\emptyset.$
\end{proof}
\end{lem}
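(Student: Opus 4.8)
The plan is to break each geodesic $\gamma_n\colon[0,L_n]\to S$ into its two short end segments $\gamma_n|[0,T]$ and $\gamma_n|[L_n-T,L_n]$ together with its long middle segment $\gamma_n|[T,L_n-T]$, where $T=T(\epsilon)$ is the time supplied by Proposition \ref{pro:BangertGutkinProperty} for a value $\epsilon>0$ still to be chosen small. I will arrange that the middle segment avoids $\Omega$ outright for all $n$ past the corresponding index $N(\epsilon)$, and that each end segment avoids $\Omega$ for all but finitely many $n$; discarding the finitely many bad indices then leaves the desired infinite subsequence, since for each surviving $n$ the interval $(0,L_n)$ is the union of $(0,T)$, $[T,L_n-T]$, and $(L_n-T,L_n)$.

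For the middle segment I would exploit the \emph{strict} inequality in Proposition \ref{pro:BangertGutkinProperty}: for $n>N(\epsilon)$ and $t\in[T,L_n-T]$ the point $\gamma_n(t)$ lies within distance $\epsilon$ of $\mathrm{tr}(\rho)$ and, along the lift $\widetilde{\gamma}_n$ to $\widetilde{F}(\epsilon_0)$, has a Fermi representation with $0<\tau_2<\epsilon$. If $\gamma_n(t)\notin\mathrm{tr}(\rho)$, then taking $\epsilon$ below $\min\{\mathrm{dist}(z,\mathrm{tr}(\rho)):z\in\Omega\setminus\mathrm{tr}(\rho)\}$ keeps $\gamma_n(t)$ out of $\Omega\setminus\mathrm{tr}(\rho)$. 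The delicate case is $\gamma_n(t)\in\mathrm{tr}(\rho)$: since this point has a representation with $\tau_2>0$, it cannot be a self-intersection point of $\rho$, because every Fermi representation of a self-intersection point has $\tau_2=0$; so it avoids $\Sigma:=\{\text{self-intersection points of }\rho\}$. Moreover, lying on $\mathrm{tr}(\rho)$ while also admitting an off-branch representation forces it to be near $\Sigma$, and from the transversality of the two branches of $\mathrm{tr}(\rho)$ at a crossing one obtains constants $\epsilon_1>0$ and $C>1$ (depending only on the crossing angles of $\rho$) with $\mathrm{dist}(\gamma_n(t),\Sigma)\le C\epsilon$ whenever $\epsilon\le\epsilon_1$. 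Choosing $\epsilon$ also below $\epsilon_1$ and below $\min\{\mathrm{dist}(z,\Sigma):z\in(\Omega\cap\mathrm{tr}(\rho))\setminus\Sigma\}/C$ then keeps $\gamma_n(t)$ strictly closer to $\Sigma$ than any point of $(\Omega\cap\mathrm{tr}(\rho))\setminus\Sigma$. Combining the two observations, $\gamma_n(t)\notin\Omega\cap\mathrm{tr}(\rho)$ as well, so $\gamma_n([T,L_n-T])\cap\Omega=\emptyset$ for all $n>N(\epsilon)$.

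For the end segments I would invoke Proposition \ref{pro:AnalyticNonconcurrence}: for each of the finitely many $z\in\Omega$ there are only finitely many unit-speed geodesics from $x$ to $z$ of length $\le T$, and likewise from $y$ to $z$. If $z\in\gamma_n((0,T))$, then the initial arc of $\gamma_n$ up to its first passage through $z$ is one of these geodesics; since a geodesic is determined by its initial velocity, this confines $\gamma_n'(0)$ to a finite set and hence confines all but finitely many of the (distinct) $\gamma_n$ to miss $z$ on $(0,T)$. The same reasoning applied to $-\gamma_n$ based at $y$ handles $z\in\gamma_n((L_n-T,L_n))$. Taking the union over the finitely many $z\in\Omega$ and over the two ends excludes only finitely many indices $n>N(\epsilon)$; the remaining infinite family $(\gamma_{n_k})$ then satisfies $\gamma_{n_k}((0,L_{n_k}))\cap\Omega=\emptyset$.

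The step I expect to be the main obstacle is the middle-segment analysis when a blocking point sits on $\mathrm{tr}(\rho)$, and in particular when it is near a self-intersection of $\rho$: everything there hinges on having the strict positivity $\tau_2>0$ (rather than merely $|\tau_2|<\epsilon$) from Proposition \ref{pro:BangertGutkinProperty} and on the transversality estimate $\mathrm{dist}(\,\cdot\,,\Sigma)\le C|\tau_2|$ near the finitely many crossings, so making that estimate precise is the crux. The end-segment argument is comparatively routine once Proposition \ref{pro:AnalyticNonconcurrence} is in hand; the only point meriting a moment's care is that no single geodesic ray through $x$ can account for infinitely many of the $\gamma_n$ passing through a fixed $z$, which one sees from the asymptotic behavior of the family and the convexity of $|\tau_2|$ along geodesics in the negatively curved tube around $\rho$.
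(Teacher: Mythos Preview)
Your proposal is correct and follows essentially the same approach as the paper: split each $\gamma_n$ into a middle segment that avoids $\Omega$ by the tube estimate of Proposition~\ref{pro:BangertGutkinProperty} (with the self-intersection set $\Sigma$ and the transversality bound $\mathrm{dist}(\cdot,\Sigma)\le C|\tau_2|$ playing exactly the role you describe), and two end segments handled by Proposition~\ref{pro:AnalyticNonconcurrence}. Your case analysis for the middle segment---distinguishing $\gamma_n(t)\notin\mathrm{tr}(\rho)$ from $\gamma_n(t)\in\mathrm{tr}(\rho)$ and using the strict inequality $\tau_2>0$ to exclude $\Sigma$---is precisely the paper's argument.

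You are in fact more careful than the paper on one point. The paper simply asserts that applying Proposition~\ref{pro:AnalyticNonconcurrence} to the pairs $(x,z)$ and $(y,z)$ yields infinitely many good $n$, whereas you correctly flag that distinct geodesics $\gamma_n$ from $x$ to $y$ need not have distinct initial velocities $\gamma_n'(0)$, so confining $\gamma_n'(0)$ to a finite set does not by itself rule out infinitely many bad $n$. Your sketched resolution is the right one: if infinitely many $\gamma_n$ shared the same initial direction $v$, then $\gamma_v$ would lift to a geodesic in $\widetilde F(\epsilon_0)$ with $\tau_2>0$ and $\tau_2\to 0$; by strict convexity of $\tau_2$ in the negatively curved tube this lift is eventually strictly decreasing in $\tau_2$, so it can meet the finitely many $\tau_2$-values that represent $y$ only finitely often, contradicting $\gamma_v(L_n)=y$ for infinitely many $L_n$.
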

Below is the proof of our main result.

\begin{proof}[Proof of Theorem \ref{thm:GeneralSecurity}]Let $(x,y)\in S\times S,$
and let $\gamma_{n}:[0,L_{n}]\to S,$ $n=1,2,\dots,$ be an infinite
family of distinct geodesics from $x$ to $y$ as in Proposition \ref{pro:BangertGutkinProperty}.
By applying Lemma \ref{lem:AvoidsOmega} to $\Omega=\{x,y\}$ and
passing to a subsequence and reindexing, we may assume that the geodesics
$\gamma_{n}$ pass through $x$ and $y$ only at the endpoints.

We will prove inductively that there exists a strictly increasing
sequence of positive integers $n_{1},n_{2},\dots,$ such that

\begin{equation}
\text{no three of }\gamma_{n_{1}},\gamma_{n_{2}},\dots,\gamma_{n_{k}}\text{ are concurrent except at }x\text{ and at }y.\label{eq:nonconcurrent}\end{equation}
 We may take $n_{1}=1$ and $n_{2}=2.$ Then (\ref{eq:nonconcurrent})
is clearly satisfied for $k=1,2.$ Now assume $k\ge2$ and we have
found $n_{1}<n_{2}<\cdots<n_{k}$ such that (\ref{eq:nonconcurrent})
holds. We will show that we can choose $n_{k+1}>n_{k}$ such that
(\ref{eq:nonconcurrent}) holds with $k$ replaced by $k+1.$ We now
apply Lemma \ref{lem:AvoidsOmega} with \[
\Omega=\bigcup_{1\le i<j\le k}\left(\gamma_{n_{i}}((0,L_{n_{i}}))\cap\gamma_{n_{j}}((0,L_{n_{j}}))\right).\]
 Since the geodesics $\gamma_{n}$ pass through $x$ and $y$ only
at the endpoints, $\Omega$ is a finite set. Thus Lemma \ref{lem:AvoidsOmega}
implies that there exists $n_{k+1}>N$ such that $\gamma_{n_{k+1}}((0,L_{n_{k+1}}))\cap\Omega=\emptyset,$
and (\ref{eq:nonconcurrent}) holds with $k$ replaced by $k+1.$
Therefore there exists an infinite sequence of positive integers $n_{1},n_{2},\dots$
such that no three of $\gamma_{n_{1}},\gamma_{n_{2}},\dots$ are concurrent
except at $x$ and at $y.$ Since any point can be in at most two
of $\gamma_{n_{1}}((0,L_{n_{1}})),\gamma_{n_{2}}((0,L_{n_{2}})),\dots,$
it follows that there does not exist a finite blocking set for $(x,y).$

\end{proof}

\section{Acknowledgments}

We thank Eugene Gutkin for his encouragement and helpful correspondence,
and we thank Ji-Ping Sha for making a correction to an earlier version
of our proof of Proposition \ref{pro:AnalyticNonconcurrence}.

\end{document}